\newcommand{\bc}{\mathbb C}
\newcommand{\bz}{\mathbb Z}
\newcommand{\ba}{\mathbb A}
\newcommand{\la}{\langle}
\newcommand{\ra}{\rangle}
\newcommand{\bs}{\backslash}
\newcommand{\al}{\alpha}
\newcommand{\lam}{\lambda}
\DeclareMathOperator{\GL}{GL}
\DeclareMathOperator{\Hom}{Hom}
\DeclareMathOperator{\Ind}{Ind}
\newcommand{\sco}{\mathcal{O}}
\newtheorem{Thm}{Theorem}[section]
\newtheorem{Prop}[Thm]{Proposition}
\newtheorem{Lem}[Thm]{Lemma}
\theoremstyle{definition}
\newtheorem{Def}[Thm]{Definition}
\theoremstyle{remark}
\newtheorem{Rem}[Thm]{Remark}
\newtheorem{Ex}[Thm]{Example}
\theoremstyle{definition}
\title{Fourier Coefficients for Degenerate Eisenstein Series and the Descending Decomposition}
\author{Yuanqing Cai}
\address{Department of Mathematics, Boston College, Chestnut Hill, MA 02467-3806}
\email{yuanqing.cai@bc.edu}
\date\today
\subjclass[2010]{Primary 11F30, 11F70; Secondary 05E10, 22E50, 22E55}
\keywords{Fourier coefficient, unipotent orbit, Eisenstein series, descending decomposition}
\begin{document}

\begin{abstract}
We determine the unipotent orbits attached to degenerate Eisenstein series on general linear groups. This confirms a conjecture of David Ginzburg. This also shows that any unipotent orbit of general linear groups does occur as the unipotent orbit attached to a specific automorphic representation. The key ingredient is a root-theoretic result. To prove it, we introduce the notion of the descending decomposition, which expresses every Weyl group element as a product of simple reflections in a certain way. It is suitable for induction and allows us to translate the question into a combinatorial statement.
\end{abstract}

\maketitle


\section{Introduction}

Knowledge of Fourier coefficients is one of the most important tools in the theory of automorphic representations.  Such Fourier coefficients are parameterized by unipotent orbits. In this article, we study Fourier coefficients for degenerate Eisenstein series on general linear groups.

Let $F$ be a number field and $\ba$ be its adele ring. Let $G$ be a connected reductive group over $F$.
Let $\sco$ denote a unipotent orbit of the group $G$. As explained in Ginzburg \cite{ginzburg2006certain}, one can associate to this unipotent orbit a unipotent subgroup $U_2(\sco)$ of $G$, and a family of characters  of $U_2(\sco)(F)\bs U_2(\sco)(\ba)$. Let $\psi_{U_2(\sco)}$ denote such a character. With this data, one can define a Fourier coefficient of an automorphic form $f$ as the integral
\begin{equation}\label{eq:1}
\int\limits_{U_2(\sco)(F)\bs U_2(\sco)(\ba)} f(ug)\ \psi_{U_2(\sco)}(u)\ du.
\end{equation}

The unipotent orbits are a partially ordered set, and for classical groups they are identified with partitions, based on the Jordan decomposition. Given an automorphic representation $\pi$ on $G(\ba)$, we say that a unipotent orbit $\sco$ is attached to $\pi$ if all Fourier coefficients of the form Eq. (\ref{eq:1}) for larger or incomparable orbits vanish identically and some coefficient for $\sco$ is nonzero.
 The determination of unipotent orbits attached to certain residue representations has played an important role in the descent method (see Ginzburg, Rallis and Soudry \cite{descentbook}). According to Ginzburg's dimension equation formalism on Rankin-Selberg integrals (Ginzburg \cite{ginzburg2006eulerian,ginzburgaim}), this information is also frequently useful in the construction of Eulerian global integrals. The unipotent orbits attached to an automorphic representation also have a connection to its Arthur parameter (see Jiang \cite{jiang2012automorphic}).



In this paper we determine the unipotent orbits attached to degenerate Eisenstein series on general linear groups. From now on, let $G=\GL_r$. Let $\mu=(t_1\cdots t_b)$ be a partition of $r$  with $t_1\geq \cdots \geq t_b>0$. Let $\mu^\top=(r_1\cdots r_a)$ denote the transpose of $\mu$. Let $P=P_{\mu^\top}$ be the standard parabolic subgroup whose Levi subgroup is $M_{\mu^\top}\cong\GL_{r_1}\times \cdots \times \GL_{r_a}$.
 Let $\delta_P$ be the modular quasicharacter of $G$ with respect to $P$. We denote by $E_\mu(g,\underline{s})$ the Eisenstein series which corresponds to the induced representation $\Ind^{G(\ba)}_{P(\ba)}\delta_{P}^{\underline{s}}$. Here $\underline{s}=(s_1,\cdots, s_{a})$ denotes a multi-complex variable.

\begin{Thm}[Theorem \ref{thm:main-thm}]\label{thm:1.1}
With the above notation, suppose that $\mathrm{Re}(s_i)\gg0$. Then the unipotent orbit attached to $E_\mu(g,\underline{s})$ is $\mu$.
\end{Thm}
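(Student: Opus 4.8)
The plan is to establish the two halves of the assertion separately: (i) the \emph{vanishing} statement, that every Fourier coefficient of $E_\mu(g,\underline s)$ along an orbit $\sco'$ with $\sco'\geq\mu$ or $\sco'$ incomparable to $\mu$ is identically zero; and (ii) the \emph{nonvanishing} statement, that some Fourier coefficient along $\mu$ itself is not identically zero. Throughout, the hypothesis $\mathrm{Re}(s_i)\gg0$ is used to guarantee absolute convergence of the Eisenstein series and of the unfolded integrals below, so that all the manipulations are legitimate and there is no accidental cancellation.

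For (i), I would begin by writing $E_\mu(g,\underline s)=\sum_{\gamma\in P(F)\backslash G(F)}f_{\underline s}(\gamma g)$ for a standard section $f_{\underline s}$ of $\Ind^{G(\ba)}_{P(\ba)}\delta_P^{\underline s}$, substitute into the integral Eq.~(\ref{eq:1}) with $U_2(\sco')$ and $\psi_{U_2(\sco')}$ in place of $U_2(\sco)$, and collapse the sum against the integration over $U_2(\sco')(F)\backslash U_2(\sco')(\ba)$. This yields a sum over the double cosets $P(F)\backslash G(F)/U_2(\sco')(F)$, which by the Bruhat decomposition are indexed by a subset of Weyl group elements $w$ together with data in the Levi. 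For each representative $w$ the inner integral $I_w$ is an integral of $\psi_{U_2(\sco')}$ over a unipotent group conjugated by $w$; the usual root-exchange manipulation --- splitting the unipotent group according to which roots $w^{-1}$ carries into the unipotent radical of $P$ and which it does not, and integrating out the superfluous directions --- shows that $I_w\equiv0$ unless $\psi_{U_2(\sco')}$ is trivial on a certain explicit unipotent subgroup built from the roots "flipped" by $w$.

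The crux is therefore the combinatorial/root-theoretic claim controlling which $w$ satisfy this triviality condition. When $\sco'\geq\mu$ strictly, or $\sco'$ is incomparable to $\mu$, \emph{no} $w$ does, so every $I_w$, and hence the whole Fourier coefficient, vanishes; when $\sco'=\mu$, a distinguished $w$ --- the one attached to the descending decomposition --- survives, and its contribution unfolds further to (essentially) the Fourier coefficient along the zero orbit of the inducing character $\delta_P^{\underline s}$ on the Levi $M_{\mu^\top}$, that is, simply a nonzero value of $f_{\underline s}$; this reflects that $\mu$ is exactly the Richardson (Lusztig--Spaltenstein induced) orbit of $P_{\mu^\top}$. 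Choosing $f_{\underline s}$ appropriately and using $\mathrm{Re}(s_i)\gg0$ to preclude cancellation from the remaining terms gives (ii). Verifying the triviality claim is precisely where the descending decomposition enters: it presents each candidate $w$ as a product of simple reflections in a form suited to induction on length, so that one can track, one reflection at a time, how the flipped-root set and the partition data defining $U_2(\sco')$ interact with $\psi_{U_2(\sco')}$. The main obstacle is this inductive root-theoretic analysis --- matching the combinatorics of the descending decomposition against the dominance order on partitions --- which is where essentially all the work is concentrated; everything else is standard unfolding and root exchange.
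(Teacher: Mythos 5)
Your proposal takes a genuinely different route from the paper, and the difference is structural rather than cosmetic. You propose to unfold the orbit Fourier coefficient $\int_{[U_2(\sco')]}E_\mu(ug,\underline s)\,\psi_{U_2(\sco')}(u)\,du$ directly, obtaining a sum over $P(F)\backslash G(F)/U_2(\sco')(F)$ and then using root exchange term by term. The paper instead interposes a reduction step: Proposition~\ref{prop:app-b-1} (a global analogue of Moeglin--Waldspurger / Gomez--Gourevitch--Sahi) shows, for \emph{any} automorphic representation $\pi$, that the unipotent orbit attached to $\pi$ in the sense of $U_2(\sco)$-coefficients can be read off from the vanishing and nonvanishing of \emph{semi-Whittaker} coefficients $\int_{[U]}f(ug)\psi_\lambda(u)\,du$, taken over the full maximal unipotent $U$. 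Only after this reduction does the paper unfold (Theorem~\ref{thm:semi-whittaker}), and then $P\backslash G/U$ is cleanly indexed by $W(P_{\mu^\top})\backslash W(G)$ via the Bruhat decomposition, so the whole question collapses to the purely root-theoretic Theorem~\ref{thm:root-theoretic}, which is where the descending decomposition enters. By contrast, the double cosets $P\backslash G/U_2(\sco')$ in your approach are \emph{not} indexed just by Weyl elements (you acknowledge the extra ``data in the Levi,'' but that data is substantial), and the root-exchange manipulation you invoke would have to be carried out coset by coset, entangled with the Weyl-group combinatorics. This is precisely the mess that Proposition~\ref{prop:app-b-1} is designed to avoid: it packages all of the root-exchange work into a single representation-theoretic statement, independent of the particular $\pi$, so that the Eisenstein-series-specific part of the argument is purely about Weyl elements and simple roots. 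Your observation that the answer must be the Richardson orbit of $P_{\mu^\top}$ is a correct and useful heuristic (and for $\GL_n$ the Richardson orbit of $P_{\mu^\top}$ is indeed $\mu$), but it is not used as such in the proof. In short: your proposal is morally in the right direction and would likely succeed in expert hands, but it omits the key organizational idea --- pass to semi-Whittaker coefficients first --- and consequently underestimates the difficulty of the double-coset analysis it would require.
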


This confirms Conjecture 5.1 in Ginzburg \cite{ginzburg2006certain}. A local analogue is given in Theorem \ref{thm:main-local}. This also implies that
any unipotent orbit of general linear groups can occur as the unipotent orbit attached to a specific automorphic representation.

The difficulty in the proof of Theorem \ref{thm:1.1} is to show  the vanishing part for incomparable orbits, and the obstruction is of a combinatorial nature. When $\mu$ is of the form $(a^b)$, the situation is easier and a similar argument for symplectic groups can be found in Jiang and Liu \cite{jiangliu2013arthur} Lemma 3.1. Indeed, an orbit $(p_1\cdots p_m)$ is greater than or not comparable to $(a^b)$ if and only if $p_1>a$. In other words, $(p_1\cdots p_m)$ is greater than $(a^b)$ in the lexicographical order. To generalize, we need to handle these incomparable orbits uniformly.

The key ingredient in our proof is a new root-theoretic result, which is connected to Theorem \ref{thm:1.1} via the  study of semi-Whittaker coefficients. Let $\lambda=(p_1 \cdots p_k)$ be a partition of $r$ (here we do not require $p_1\geq \cdots \geq p_k$). Let $P_\lambda$ be the standard parabolic subgroup of $\GL_r$ whose Levi subgroup $M_\lam\cong \GL_{p_1}\times \cdots\times \GL_{p_k}$. Fix a nontrivial additive character $\psi:F\bs\ba\to \bc^\times$. Let $\psi_\lambda:U(F)\bs U(\ba)\to \bc^\times$ be the  character  such that it acts as $\psi$ on the simple positive root subgroups contained in $M_\lam$, and acts trivially otherwise. With this data,  a $\lambda$-semi-Whittaker coefficient of an automorphic form $f$ is
\[
\int\limits_{U(F)\bs U(\ba)}f(ug)\psi_\lambda(u) \ du.
\]

There is a strong relation between these two types of Fourier coefficients. In the local context, this is developed in Moeglin and Waldspurger \cite{MoeW}, and  Gomez, Gourevitch and Sahi \cite{gomez2015generalized}. We prove a global version in Proposition \ref{prop:app-b-1}. Thus, to prove Theorem \ref{thm:1.1}, it suffices to prove the following result.

\begin{Thm}[Theorem \ref{thm:semi-whittaker}]\label{thm:1.2}
Suppose $\mathrm{Re}(s_i)\gg0$.
\begin{enumerate}[\normalfont(1)]
\item If there is an index $l$ such that $p_1+\cdots +p_l> t_1+\cdots + t_l$, then
\[
\int\limits_{U(F)\bs U(\ba)} E_\mu(ug,\underline{s})\psi_\lambda(u) \ du= 0
\]
for all choices of data.
\item The semi-Whittaker coefficient
\[
\int\limits_{U(F)\bs U(\ba)} E_\mu(ug,\underline{s})\psi_\mu(u) \ du
\]
is nonzero for some choice of data.
\end{enumerate}
\end{Thm}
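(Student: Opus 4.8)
The plan is to unfold the Eisenstein series and reduce both statements to the root-theoretic / combinatorial core. Since $\mathrm{Re}(s_i)\gg 0$, the series $E_\mu(g,\underline s)$ converges absolutely, so in the integrals against $\psi_\lambda$ we may interchange summation and integration freely. Writing $E_\mu(g,\underline s)=\sum_{\gamma\in P(F)\bs G(F)}\delta_P^{\underline s}(\gamma g)$ and using the Bruhat decomposition $P(F)\bs G(F)=\bigsqcup_w (P(F)\cap wP_\lambda^{-}w^{-1})\bs wP_\lambda^{-}(F)$ indexed by a suitable set of Weyl group representatives $w$, the $\psi_\lambda$-coefficient becomes a sum over $w$ of integrals of the form $\int \delta_P^{\underline s}(w u g)\,\psi_\lambda(u)\,du$ over appropriate unipotent subgroups. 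The standard mechanism (\emph{root exchange}, as in Ginzburg--Rallis--Soudry) shows that such a Bruhat-cell contribution vanishes identically unless the character $\psi_\lambda$ restricted to the unipotent radical conjugated by $w$ is compatible with $w P_\lambda^- w^{-1}\cap U$ being trivial on the relevant root subgroups — concretely, unless a certain set of roots associated to $w$, the parabolic $P$, and the partition $\lambda$ satisfies a combinatorial condition. This is precisely where the promised root-theoretic result and the descending decomposition enter: one writes each $w$ via its descending decomposition into simple reflections and tracks, by induction on the length, how the roots of $U\cap w U_P^- w^{-1}$ interact with the support of $\psi_\lambda$.

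For part (1), I would show that the hypothesis $p_1+\cdots+p_l>t_1+\cdots+t_l$ for some $l$ forces \emph{every} Bruhat cell to contribute zero. The idea: translating through the descending decomposition, $\psi_\lambda$ being nontrivial on a simple root subgroup inside $M_\lambda$ that is conjugated by $w$ into the unipotent radical of $P$ produces, after the root-exchange argument, an inner integral of a nonconstant character over a compact group, hence $0$; the inequality on partial sums is exactly the combinatorial obstruction guaranteeing that no choice of $w$ can avoid this. The key reduction is that the condition ``$\exists l:\ p_1+\cdots+p_l>t_1+\cdots+t_l$'' is equivalent to $\lambda$ not being dominated by $\mu$ in the appropriate partial order, which in turn is equivalent to the non-existence of a Weyl element $w$ making the two flags compatible — and this equivalence is the content of the root-theoretic theorem the descending decomposition is designed to prove.

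For part (2), I would exhibit the single Bruhat cell for which the $\psi_\mu$-integral survives. Taking $\lambda=\mu$ (so $P_\lambda$ and $P$ have ``transpose-compatible'' shapes), there is a distinguished Weyl element $w_0$ — essentially the long element relative to the block structure, again most cleanly described via its descending decomposition — for which $w_0 U_P^- w_0^{-1}\cap U$ is disjoint from the support of $\psi_\mu$, so the root-exchange procedure leaves a nonzero integral: it collapses to (a constant times) an integral of $\delta_P^{\underline s}$ against a generic-type character over a smaller group, which one evaluates to be a nonzero meromorphic function of $\underline s$, nonvanishing for $\mathrm{Re}(s_i)\gg 0$, for a suitable choice of section. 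One then checks that the contributions of all other cells either vanish (by the part (1) mechanism applied internally) or cannot cancel the main term, e.g.\ by a support or central-character argument that separates the main cell from the rest.

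The main obstacle is part (1): controlling \emph{all} Bruhat cells simultaneously. A single $w$ is easy, but ruling out every $w$ requires the uniform combinatorial handling of incomparable orbits that the introduction flags as the crux — and this is exactly why the descending decomposition is introduced, to make the induction on Weyl length go through. I expect the bulk of the work to be: (i) setting up the root-exchange lemma in enough generality to apply cell-by-cell, and (ii) proving the combinatorial statement that the partial-sum inequality obstructs compatibility for every $w$, which I anticipate is handled by the earlier root-theoretic theorem together with Proposition \ref{prop:app-b-1} relating semi-Whittaker coefficients to the orbit integrals \eqref{eq:1}.
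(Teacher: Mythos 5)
Your proposal follows essentially the same route as the paper: unfold the Eisenstein series over $P(F)\backslash G(F)/U(F)\cong W(P_{\mu^\top})\backslash W(G)$, kill each cell in part (1) by integrating a nontrivial character over a compact quotient whenever $w$ sends some $\alpha\in\Delta_\lambda$ to a positive root (the combinatorial content being exactly Theorem \ref{thm:root-theoretic}(1), proved via the descending decomposition), and for part (2) isolate the unique surviving cell $w_\mu$, whose contribution factorizes into a nonvanishing ratio of zeta functions for $\mathrm{Re}(s_i)\gg 0$. The only refinement worth noting is that in part (2) no cancellation or central-character argument is needed: Theorem \ref{thm:root-theoretic}(2) shows every cell other than $w_\mu$ vanishes outright by the part (1) mechanism (and one must also check $w_\mu(\Delta_\mu)\subset\Phi^--\Phi^-_{\mu^\top}$, since a simple root sent into the Levi would likewise kill the cell).
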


When $\mathrm{Re}(s_i)\gg0$, by a standard unfolding argument, Theorem \ref{thm:1.2} is quickly reduced to the following root-theoretic result.

\begin{Thm}[Theorem \ref{thm:root-theoretic}]\label{thm:1-3}
Let $\Delta$ denote the set of  simple roots with respect to the standard Borel subgroup $B$. Let $\Phi^+$ and $\Phi^-$ be the set of positive roots and negative roots, respectively. Let $\Delta_\lambda$ denote the set of simple roots contained in $M_\lambda$. Let $\Phi_{\mu^\top}^-$ denote the set of negative roots contained in $M_{\mu^\top}$. Let $W(P_{\mu^\top})$ and $W(G)$ be the Weyl groups of $P_{\mu^\top}$ and $G$, respectively. Let $[W(P_{\mu^\top})\bs W(G)]$ be the set of minimal representatives for $W(P_{\mu^\top})\bs W(G)$.
\begin{enumerate}[\normalfont(1)]
\item If there is an index $l$ such that
        \[
            p_1+\cdots +p_l> t_1+\cdots+t_l,
            \]
            then for any $w\in [W(P_{\mu^\top})\bs W(G)]$, there exists $\al\in \Delta_\lam$ such that $w(\al)>0$.
   \item If $\lam=\mu$, then there exists a unique $w_\mu\in [W(P_{\mu^\top})\bs W(G)]$ such that $w_\mu(\al)\in \Phi^--\Phi^-_{\mu^\top}$ for all $\al\in \Delta_\mu$; and $w(\Delta_\mu)\cap \Phi^+\neq \emptyset$ for all $w\neq w_\mu$.
    \end{enumerate}
\end{Thm}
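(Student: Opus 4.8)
The plan is to use the descending decomposition alluded to in the abstract to reduce both parts of Theorem~\ref{thm:1-3} to a transparent combinatorial bookkeeping problem on the columns of the parabolic $P_{\mu^\top}$ and the blocks of $P_\lambda$. Recall that $W(G) \cong S_r$ and that the minimal-length representatives $[W(P_{\mu^\top}) \bs W(G)]$ are indexed by ordered set partitions of $\{1,\dots,r\}$ into blocks of sizes $r_1,\dots,r_a$; concretely, such a $w$ sorts each ``column block'' into increasing order. The condition $w(\al) > 0$ for a simple root $\al = e_j - e_{j+1} \in \Delta_\lam$ (i.e. $j, j+1$ lying in the same $P_\lambda$-block) is simply the statement that $w^{-1}(j) < w^{-1}(j+1)$, i.e. that $j$ and $j+1$ are \emph{not} inverted by $w^{-1}$. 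Dually, $w(\al) \in \Phi^- - \Phi^-_{\mu^\top}$ means $j, j+1$ are inverted by $w^{-1}$ and, moreover, $w^{-1}(j)$ and $w^{-1}(j+1)$ lie in different column blocks of $\mu^\top$. So the whole theorem becomes: control which consecutive pairs inside $\lambda$-blocks get inverted, and where their images land among the $\mu^\top$-columns.

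For part~(1), I would argue by contradiction: suppose some $w \in [W(P_{\mu^\top}) \bs W(G)]$ inverts \emph{every} consecutive pair inside every $\lambda$-block, i.e. $w^{-1}$ is strictly decreasing on each of the blocks $\{1,\dots,p_1\}, \{p_1+1,\dots,p_1+p_2\},\dots$. Now invoke the descending decomposition to write $w$ (or rather track $w^{-1}$) as a product of simple reflections in the prescribed order; the key structural fact to extract is that since $w$ is a minimal representative, its inverse restricted to the first $l$ $\lambda$-blocks — a set of size $p_1 + \cdots + p_l$ — must, being strictly decreasing on each block, place these $p_1+\cdots+p_l$ elements into positions that are ``column-sorted,'' and a counting argument (pigeonhole on the first $l$ sorted columns, whose total capacity for a strictly-decreasing-on-blocks arrangement is at most $t_1 + \cdots + t_l$ by the very definition of the transpose partition $\mu^\top$) forces $p_1 + \cdots + p_l \le t_1 + \cdots + t_l$, contradicting the hypothesis. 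Making the phrase ``column-sorted capacity'' precise — matching the combinatorics of $\mu^\top$ against a decreasing-on-blocks permutation — is where the descending decomposition does its real work and is the step I expect to be the main obstacle; everything else is translation.

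For part~(2), with $\lambda = \mu$, the inequalities $p_1 + \cdots + p_l \le t_1 + \cdots + t_l$ all hold with the specific partition $\mu$, and in fact there is a \emph{unique} way to place the blocks of $\mu$ into the columns of $\mu^\top$ so that each block is strictly decreasing and no consecutive pair stays in the same column — this is exactly the ``staircase'' filling of a Young diagram by its rows read against its columns, and the associated permutation is the desired $w_\mu$; I would check directly that $w_\mu(\al) \in \Phi^- - \Phi^-_{\mu^\top}$ for all $\al \in \Delta_\mu$ and that $w_\mu$ is minimal. Uniqueness — that any other minimal representative $w \ne w_\mu$ has $w(\Delta_\mu) \cap \Phi^+ \ne \emptyset$ — follows because any deviation from the staircase filling either leaves some consecutive pair of a $\mu$-block in a common column (then that simple root stays negative but inside $M_{\mu^\top}$, so we instead use minimality to find a positive image) or, more to the point, fails to invert some consecutive pair at all, producing the required $\al$ with $w(\al) > 0$. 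I would close by noting that the existence-and-uniqueness here is precisely the combinatorial incarnation of the fact that the $\mu$-semi-Whittaker coefficient of $E_\mu$ picks out a single Bruhat cell, which is what makes part~(2) of Theorem~\ref{thm:1.2} nonvanishing.
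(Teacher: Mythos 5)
Your move to translate the statement into the language of a permutation $\sigma = w$ and its inverse acting on blocks is the right one, and the ``counting argument'' you gesture at does close part~(1) --- but you have not actually carried it out, and you say you expect that very step to be ``the main obstacle.'' It is not an obstacle, and the descending decomposition is not needed for it. Let me also flag a convention slip: with the action the paper uses (see Lemma~\ref{lem:root-calculation}), $w(\alpha_j) = e_{w(j)} - e_{w(j+1)}$, so $w(\alpha_j)>0$ iff $w(j) < w(j+1)$, not $w^{-1}(j) < w^{-1}(j+1)$; while minimality of $w$ in $W(P_{\mu^\top})\backslash W(G)$ is $w^{-1}(\Delta_{\mu^\top})>0$, i.e.\ $w^{-1}$ strictly increasing on each $\mu^\top$-block. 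So the correct dictionary is: $\sigma=w$ is strictly decreasing on each $\lambda$-block, and $\sigma^{-1}$ is strictly increasing on each $\mu^\top$-block.

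Once those two facts are in hand the pigeonhole is elementary. If $x<y$ lie in a common $\lambda$-block then $\sigma(x)>\sigma(y)$; if $\sigma(x),\sigma(y)$ also lay in a common $\mu^\top$-block, increasingness of $\sigma^{-1}$ there would give $x>y$, a contradiction. Hence the bipartite incidence graph on the $\lambda$-blocks $B_1,\dots,B_m$ and $\mu^\top$-blocks $C_1,\dots,C_a$ --- an edge $(B_j,C_k)$ whenever $\sigma(B_j)\cap C_k\neq\emptyset$ --- is simple, with $\deg B_j=p_j$ and $\deg C_k=r_k$. Counting edges meeting $\{B_1,\dots,B_l\}$, each $C_k$ contributes at most $\min(l,r_k)$, so $p_1+\cdots+p_l\le \sum_k\min(l,r_k)=t_1+\cdots+t_l$; that is part~(1). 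For part~(2), $\lambda=\mu$ forces equality at every $l$, which pins the incidence graph down to the Young diagram of $\mu$ ($B_j\sim C_k$ iff box $(j,k)$ is present), and the two monotonicity conditions then determine $\sigma$ uniquely; the refinement $w_\mu(\alpha)\in\Phi^-\setminus\Phi^-_{\mu^\top}$ is automatic from the same no-two-in-a-common-$\mu^\top$-block observation. You asserted this ``staircase filling'' uniqueness but gave no argument for it.

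This is a genuinely different route from the paper's. The paper represents minimal coset elements via the descending decomposition and the Young tableau of Theorem~\ref{thm:coset-representative}, partitions $\Delta_\lambda$ into the sets $R_l$ of simple roots that first turn negative under the partial products $\Pi_l\cdots\Pi_a$, proves the structural Lemma~\ref{lem:key-obser}, and then combines the cardinality bound $\#R_l\le r_l$ (Lemma~\ref{lem:root-less-than-cycle}) with the transpose inequality (Lemma~\ref{lem:traspose-unipotent}). That machinery earns its keep because the descending decomposition is reused elsewhere in the paper and makes the rank-reduction in part~(2) explicit and inductive; your bipartite-graph argument gives a shorter, self-contained proof of Theorem~\ref{thm:1-3} on its own, but only once you supply the double count you skipped and fix the $w$ versus $w^{-1}$ bookkeeping.
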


This result is analogous to Casselman and Shalika \cite{casselman1980unramified} Lemma 1.5. We prove it by translating the root-theoretic problem into a combinatorial problem. Thus we need to analyze the action of the Weyl group on the simple roots with care.   To do so, we introduce the notion of descending decomposition, which is a systematic way to write down all the Weyl group elements. The descending decomposition expresses every element as a product of simple reflections in a certain way and it is suitable for induction. To simplify notations we let $G=\GL_{r+1}$ and $W(G)$ be its Weyl group. Choose the following long word decomposition
\[
w_0=s_1(s_{2} s_{1})(s_{3} s_{2} s_{1}) \cdots (s_r s_{r-1}\cdots s_1),
\]
where $s_1,\cdots,s_r$ are simple reflections in $W(G)$ with the usual labeling.
In fact, the expression $s_is_{i-1}\cdots s_1$ is the long word in the set of minimal representatives for $W(\GL_{i})\bs W(\GL_{i+1})$. Choosing a string starting with $s_i$ in each $(s_is_{i-1}\cdots s_1)$, and multiplying them,  we obtain an element in $W(G)$. Surprisingly, every $w\in W(G)$ has a unique expression of this form.

To be more precise,
let
\[
D=\{(k_1,\cdots, k_r): k_i\in \bz, \ 1\leq k_i\leq i+1\}.
\]
For each $k_i$, define
\begin{equation}
\pi_{k_i}=\left\{
\begin{aligned}
&s_{i}s_{i-1}\cdots s_{k_i} &\text{ if }1\leq k_i\leq i,\\
&e \text{ (the identity element)} &\text{ if }k_i=i+1.\\
\end{aligned}
\right.
\end{equation}
Define a map
\[
\pi: D\to W(G), \qquad (k_1,\cdots, k_r)\mapsto \pi_{k_1}\cdots \pi_{k_r}.
\]

\begin{Prop}(Proposition \ref{prop:descending-decomposition})
The map
$\pi$
 is a bijection. Moreover, $\pi(k_1,\cdots,k_r)$ is reduced.
\end{Prop}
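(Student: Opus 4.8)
The plan is to prove that $\pi$ is a bijection by a counting argument combined with a direct verification that every word $\pi(k_1,\dots,k_r)$ is reduced, so that its length equals the number of simple reflections appearing in it. First I would observe that $|D| = \prod_{i=1}^{r}(i+1) = (r+1)!= |W(G)|$, so it suffices to prove that $\pi$ is injective, or (what turns out to be more convenient for the induction) that $\pi$ is surjective. The natural strategy is induction on $r$. Write $G_i = \GL_{i+1}$ and let $W_i = W(G_i) = S_{i+1}$, embedded in $W(G) = W_r$ in the standard way via the first $i$ simple reflections. The factor $\pi_{k_1}\cdots \pi_{k_{r-1}}$ lies in $W_{r-1}$, and $\pi_{k_r}\in \{e,\, s_r, \, s_rs_{r-1},\,\dots,\, s_rs_{r-1}\cdots s_1\}$ is exactly the set of minimal-length representatives for $W_{r-1}\backslash W_r$ — indeed $s_rs_{r-1}\cdots s_j$ sends $e_{r+1}$ (the last basis vector, thinking of $S_{r+1}$ acting on coordinates) to $e_j$, and these $r+1$ elements form a complete set of coset representatives, each of minimal length in its coset. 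Hence the product decomposition $W_r = W_{r-1}\cdot [W_{r-1}\backslash W_r]$ is exactly mirrored by $D = D_{r-1}\times \{1,\dots,r+1\}$, and by the inductive hypothesis $\pi$ restricted to $D_{r-1}$ is a bijection onto $W_{r-1}$; composing gives that $\pi$ is a bijection onto $W_r$.

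Next I would establish the reducedness claim. Since $\pi_{k_r}=s_rs_{r-1}\cdots s_{k_r}$ is a reduced word (it is a minimal coset representative), and $\pi_{k_1}\cdots\pi_{k_{r-1}}$ is reduced by induction, it remains to check that no cancellation occurs when the two are concatenated, i.e. that $\ell(\pi_{k_1}\cdots\pi_{k_{r-1}}\cdot \pi_{k_r}) = \ell(\pi_{k_1}\cdots\pi_{k_{r-1}}) + \ell(\pi_{k_r})$. This is precisely the standard fact that for $w\in W_{r-1}$ and $v$ the minimal representative of a coset in $W_{r-1}\backslash W_r$, one has $\ell(wv) = \ell(w)+\ell(v)$; equivalently $v^{-1}$ is a minimal representative for $W_r/W_{r-1}$ and lengths add. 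Counting letters, the total number of simple reflections in the expression $\pi(k_1,\dots,k_r)$ is $\sum_{i=1}^{r}(i+1-k_i)$, and reducedness says this equals $\ell(\pi(k_1,\dots,k_r))$; this also gives an independent proof of injectivity via the length/inversion bookkeeping if one prefers to avoid the surjectivity argument.

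The main obstacle, and the step deserving the most care, is the coset-representative identification: one must verify rigorously that $\{\pi_{k_r} : 1\le k_r \le r+1\}$ is a complete, irredundant set of minimal-length representatives for $W(\GL_r)\backslash W(\GL_{r+1})$, together with the length-additivity $\ell(wv)=\ell(w)+\ell(v)$ for $w\in W(\GL_r)$. In the symmetric-group model this is transparent — $\pi_{k_r}$ is the cycle carrying position $r+1$ down to position $k_r$, these have distinct images of the last coordinate, and the inversions created are disjoint from those of any $w\in S_{r+1}$ fixing the last coordinate — but it is the one place where the abstract setup must be pinned to the concrete combinatorics of $\GL_{r+1}$. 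Once this is in hand, the induction closes immediately: the base case $r=1$ is $W(\GL_2)=\{e,s_1\}$ with $D=\{1,2\}$ and $\pi_1=s_1$, $\pi_2=e$, which is clearly a bijection onto a reduced set, and the inductive step is the product decomposition described above.
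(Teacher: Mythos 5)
Your proof is correct, but it takes a genuinely different route from the paper's. The paper proves surjectivity by brute-force manipulation of reduced words: it chooses a reduced expression with the fewest occurrences of $s_r$, uses the commutation and braid relations to show there is at most one $s_r$, and then applies induction on the two sides of that $s_r$; reducedness is then proved separately by verifying the root-positivity criterion $\pi_{k_1}\cdots\pi_{k_{t-1}}(\alpha_t+\cdots+\alpha_j)>0$ via a coefficient argument. Your proof instead invokes the structure theory of parabolic subgroups: you identify $\{\pi_{k_r}\}$ as the minimal-length representatives of $W(\GL_r)\backslash W(\GL_{r+1})$, use the product decomposition $W = W_J\cdot W^J$ to get bijectivity in one step, and then get reducedness for free from the standard length-additivity $\ell(wv)=\ell(w)+\ell(v)$ for $w\in W_J$, $v\in W^J$. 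Your approach is conceptually cleaner and makes the descending decomposition transparent as an iterated parabolic-coset factorization; it also unifies the bijectivity and reducedness claims through a single mechanism rather than proving them by separate arguments. The trade-off is that you invoke, rather than reprove, the facts about minimal coset representatives (which the paper itself cites from Casselman later, in Proposition 2.8), whereas the paper's argument is fully self-contained at the cost of being more computational. You correctly flag the one place your argument needs to be pinned down, namely that $\{\pi_{k_r}\}$ really is the full set of minimal-length representatives with the length-additivity property; your sketch of this in the permutation model (distinct images of the last coordinate, disjoint inversion sets) is the right verification and is easy to complete.
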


 This leads to a natural way to express elements in $[W(P_{\mu^\top})\bs W(G)]$ as products of simple reflections (Theorem \ref{thm:coset-representative}). It allows us to compute the action of the Weyl group on the set of simple roots systematically, and  translate Theorem \ref{thm:1-3} into a combinatorial fact.

Finally we remark that for  other Cartan types, the situation is more complicated. For classical groups, it is known that only special unipotent orbits can occur as the unipotent orbits attached to automorphic representations (see \cite{ginzburg2006certain} Theorem 3.1, \cite{jiang2014raising}) and further assumptions are required for cuspidal representations (see \cite{ginzburg2006certain} Section 4, \cite{ginzburg2003fourier} Theorem 2.7). The interested reader is also referred to \cite{ginzburg2006certain} Section 5.2 for further information.




The rest of this paper is organized as follows. Section \ref{sec:descending}  introduces the descending decomposition and constructs a set of coset representatives.  Section \ref{sec:root-theoretic} is devoted to proving the root-theoretic result.    Section \ref{sec:degenerate-eis-series} introduces degenerate Eisenstein series and proves Theorem \ref{thm:semi-whittaker}. We introduce Fourier coefficients associated with unipotent orbits in Section \ref{subsec:fourier-unipontent}, and establish the connection between these two types of Fourier coefficients in Section \ref{sec:relations-between-coefficients}. The main result, i.e. the determination of the unipotent orbits attached to degenerate Eisenstein series, is given in Theorem \ref{thm:main-thm}.

\subsection*{Acknowledgement}
The descending decomposition was first discovered by Mark Reeder in a different context.
The author would like to thank him for many helpful discussions. The author would also like to thank Solomon Friedberg and David Ginzburg for the encouragement and their helpful comments. This work was supported by the National Science Foundation, grant number 1500977.

\section{Descending Decomposition and Coset Representatives} \label{sec:descending}

Let $G=\GL_{r+1}$. Let $B$ denote the standard Borel subgroup of $G$ with torus $T$ and unipotent radical $U$. Let $\Phi$ be the root system of $G$. Let $\Delta=\{\al_1,\cdots,\al_r\}$ denote the set of  simple roots with respect to $B$ (with the usual labeling). Let $\Phi^+$ and $\Phi^-$ be the set of positive roots and negative roots, respectively.
For each simple root $\al_i\in \Delta$, let $s_i$ denote the corresponding simple reflection. The Weyl group $W(G)$ of $G$ is generated by $s_1, \cdots, s_r$ and $W(G)\cong S_{r+1}$.

\subsection{Descending Decomposition}
Choose the following \textit{nice} long word decomposition (in the sense of Littelmann \cite{littelmann1998cones} Section 4)
\[
w_0=s_1(s_{2} s_{1})(s_{3} s_{2} s_{1}) \cdots (s_r s_{r-1}\cdots s_1).
\]
Let
\[
D=\{(k_1,\cdots, k_r): k_i\in \bz, 1\leq k_i\leq i+1\}.
\]
For each $k_i$, define
\begin{equation}\label{eq:pi-k}
\pi_{k_i}=\left\{
\begin{aligned}
&s_{i}s_{i-1}\cdots s_{k_i} &\text{ if }1\leq k_i\leq i,\\
&e \text{ (the identity element)} &\text{ if }k_i=i+1.\\
\end{aligned}
\right.
\end{equation}
Each $\pi_{k_i}$ is called a \textit{cycle} (notice this is slightly stronger than the usual ``cycle'' in $S_{r+1}$). For convenience, sometimes we use $s_{i_1 i_2\cdots i_N}$ to denote $s_{i_1}s_{i_2}\cdots s_{i_N}$ in each cycle. Define a map
\[
\pi: D\to W(G), \qquad (k_1,\cdots, k_r)\mapsto \pi_{k_1}\cdots \pi_{k_r}.
\]

\begin{Ex}
If $r=2$, then the Weyl group is $S_3$. The long word decomposition we consider is
\[
w_0=(s_1)(s_2 s_1).
\]
It is easy to check:
\[
\pi(2,3)=e, \qquad \pi(2,2)=s_2, \qquad  \pi(2,1)=s_{21},
\]
\[
\pi(1,3)=s_1, \qquad  \pi(1,2)=s_1s_2, \qquad  \pi(1,1)=s_1s_{21}.
\]
These are all the elements in $S_3$.
\end{Ex}

\begin{Prop}\label{prop:descending-decomposition}
The map
$\pi: D\to W(G)$
 is a bijection. Moreover, $\pi(k_1,\cdots,k_r)$ is reduced.
\end{Prop}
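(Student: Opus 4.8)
The plan is to induct on $r$. The decisive structural feature is that in the product $\pi(k_1,\dots,k_r)=\pi_{k_1}\cdots\pi_{k_{r-1}}\cdot\pi_{k_r}$ the first $r-1$ cycles involve only the reflections $s_1,\dots,s_{r-1}$, so the partial product $u:=\pi_{k_1}\cdots\pi_{k_{r-1}}$ lies in the standard parabolic subgroup $W(\GL_r)=\langle s_1,\dots,s_{r-1}\rangle\le W(\GL_{r+1})$, while the last cycle ranges over the \emph{staircase}
\[
c_j:=s_r s_{r-1}\cdots s_j\quad(1\le j\le r),\qquad c_{r+1}:=e .
\]
Thus $\pi(k_1,\dots,k_r)=u\cdot c_{k_r}$ with $u\in W(\GL_r)$ and $c_{k_r}$ one of these $r+1$ elements.

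First I would record the classical fact that $\{c_{r+1},c_r,\dots,c_1\}$ is exactly the set $[W(\GL_r)\bs W(\GL_{r+1})]$ of minimal-length representatives of the left cosets $W(\GL_r)\bs W(\GL_{r+1})$. A direct computation (or the standard classification of minimal coset representatives in type $A$) shows that the left descent set of $c_j=s_r\cdots s_j$ is exactly $\{s_r\}$; since $s_r\notin\{s_1,\dots,s_{r-1}\}$, each $c_j$ is minimal in its coset $W(\GL_r)c_j$, and as the $c_j$ have pairwise distinct lengths $0,1,\dots,r$ they lie in distinct cosets — of which there are precisely $(r+1)!/r!=r+1$, so they exhaust all cosets. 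Moreover the standard parabolic factorization gives $\ell(u\,c_j)=\ell(u)+\ell(c_j)$ for every $u\in W(\GL_r)$; in particular $s_r s_{r-1}\cdots s_j$ is a reduced word.

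With this input the induction is essentially formal. The base case $r=1$ is the list $\pi(1)=s_1$, $\pi(2)=e$, which is all of $W(\GL_2)$. For the inductive step, given $w\in W(\GL_{r+1})$ write it uniquely as $w=u\,c_{k_r}$ with $u\in W(\GL_r)$ and $c_{k_r}$ the minimal representative of the coset $W(\GL_r)w$; this pins down $k_r\in\{1,\dots,r+1\}$. Applying the inductive hypothesis to $\GL_r$ — whose descending map uses the reflections $s_1,\dots,s_{r-1}$ and the index set $\{(k_1,\dots,k_{r-1}):1\le k_i\le i+1\}$ — the element $u$ equals $\pi_{k_1}\cdots\pi_{k_{r-1}}$ for a unique tuple $(k_1,\dots,k_{r-1})$, with that expression reduced. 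Hence $w=\pi(k_1,\dots,k_r)$, and the tuple is unique because $k_r$ is forced by the coset of $w$ and then $(k_1,\dots,k_{r-1})$ is forced by $u=w\,c_{k_r}^{-1}$. This proves $\pi$ is a bijection. (Alternatively, since $|D|=\prod_{i=1}^r(i+1)=(r+1)!=|W(\GL_{r+1})|$, it is enough to check surjectivity, which the same coset argument delivers.)

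Finally, for reducedness: $\pi(k_1,\dots,k_r)=u\,c_{k_r}$ with $\ell(u\,c_{k_r})=\ell(u)+\ell(c_{k_r})$ by the parabolic factorization, while $\pi_{k_1}\cdots\pi_{k_{r-1}}$ is reduced for $u$ by induction and $\pi_{k_r}=s_r\cdots s_{k_r}$ is reduced for $c_{k_r}$; concatenating two reduced expressions whose lengths add yields a reduced expression, so $\pi_{k_1}\cdots\pi_{k_r}$ is reduced. The only genuine ingredient beyond bookkeeping is the classical fact that the staircase $\{e,s_r,s_r s_{r-1},\dots,s_r\cdots s_1\}$ is the set of minimal representatives for $W(\GL_r)\bs W(\GL_{r+1})$ together with the length-additivity of multiplication by $W(\GL_r)$; I expect that point to be the one worth stating carefully (or simply citing), after which everything reduces to routine induction.
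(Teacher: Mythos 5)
Your proof is correct, but it takes a genuinely different route from the paper's. The paper proves surjectivity by a word-combinatorial argument (pick a reduced word for $w$ minimizing the number of occurrences of $s_r$, show there is at most one $s_r$ using the braid and commutation relations, then shuffle cycles across it), deduces injectivity from the cardinality count $|D|=(r+1)!=|W(G)|$, and proves reducedness by a separate direct computation showing $\pi_{k_1}\cdots\pi_{k_{t-1}}(\al_t+\cdots+\al_j)>0$. You instead invoke the standard structure theory of parabolic quotients: the staircase $\{e,\,s_r,\,s_rs_{r-1},\dots,s_r\cdots s_1\}$ is the set of minimal-length representatives of $W(\GL_r)\bs W(\GL_{r+1})$, and the unique length-additive factorization $w=u\,c_{k_r}$ with $u\in W(\GL_r)$ drives the induction. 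This buys you existence, uniqueness, and reducedness simultaneously (uniqueness falls out of the factorization rather than from counting, and reducedness from $\ell(uc)=\ell(u)+\ell(c)$ plus concatenation of reduced words), and it effectively anticipates the paper's later Proposition \ref{prop:minimal} that these representatives are minimal in their cosets. The trade-off is that you must supply the classical input about minimal coset representatives — which is standard (e.g.\ Casselman's notes, cited in the paper) and in any case verifiable by exactly the kind of root computation the paper performs in Lemma \ref{lem:root-calculation}: $c_j^{-1}(\al_i)>0$ for all $i\neq r$, so the left descent set of $c_j$ is $\{s_r\}$. The paper's argument is more self-contained and elementary; yours is shorter and more structural. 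Both are sound.
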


\begin{proof}
We first prove that the map $\pi$ is surjective.  The case $r=1$ is clear. We assume the result is true for $r-1$, and prove it for $r$.  Any Weyl group element $w$ can be written as a product of simple reflections. Choose a reduced expression for $w$ such that the number of $s_r$  is a minimum. If the expression does not contain $s_r$, then we are done by induction. If it contains $s_r$, we  show that it contains at most one $s_r$. Suppose there are two.  By applying induction on the expression between the two $s_r$'s,   $w$ can be written in one of  the following forms:
 \[
 \cdots s_r \cdots s_{r-1}\cdots s_r \cdots, \text{ or }\cdots s_r \cdots s_r \cdots
 \]
 Notice that $s_r$ commutes with all the simple reflections except $s_{r-1}$. If there is no $s_{r-1}$, then the two $s_r$'s cancel with each other; if there is only one $s_{r-1}$, then we can use the relation  $s_r s_{r-1} s_r=s_{r-1} s_r s_{r-1}$ to reduce the number of $s_r$. Thus there is at most one $s_r$ in the reduced expression.

Now applying induction to the expression on the right-hand side of $s_r$, we obtain a product of cycles. Then in the reduced expression we can move every cycle on the right-hand side of $s_r$ to the left-hand side except the last cycle $(s_{r-1}s_{r-2}\cdots)$. Then apply induction again on the left-hand side to obtain the desired expression. This shows that the map $\pi$ is surjective.

Notice that the cardinalities of $D$ and $W(G)$ are both $(r+1)!$. Thus $\pi$ is a bijection.

Now we show that $\pi(k_1,\cdots,k_r)$ is a reduced expression. Let
$\Phi^-(w)=\{\al>0: w\al <0\}.$
If $\al$ is a simple root, then
\[
\ell ( w s_\al)
= \left\{
        \begin{aligned}
        \ell(w)+1, \qquad &\text{ if }\al\notin \Phi^-(w),\\
        \ell(w)-1, \qquad &\text{ if }\al \in \Phi^-(w).
        \end{aligned}
        \right.
\]
Thus, it suffices to show that
\[
\pi_{k_1}\cdots \pi_{k_{t-1}}(\al_t+\cdots +\al_j)>0
\]
for any $k_1,\cdots, k_{t-1}$ and all $j$,  $1\leq j\leq t$.

Notice that $\pi_{k_1}\cdots \pi_{k_{t-1}}\in \la s_1,\cdots, s_{t-1}\ra$. The coefficient of $\al_t$ in $\pi_{k_1}\cdots \pi_{k_{t-1}}(\al_t+\cdots +\al_j)$ must be $1$. This shows that $\pi_{k_1}\cdots \pi_{k_{t-1}}(\al_t+\cdots +\al_j)$ is a positive root.
\end{proof}

\begin{Rem}
If we identify $W(G)$ with the permutation group on $r+1$ elements such that $s_i$ corresponds to the transposition $(i,i+1)$, then
\[
\pi_{k_i}=s_i s_{i-1}\cdots s_{k_i} \longleftrightarrow (i+1, i, \cdots, k_i)\in S_{r+1}.
\]
This explains the title \textit{descending decomposition}; see also Reeder \cite{marknotes} Section 3.2.1 for an explicit construction of the inverse map and Section 7.1 for the natural appearance of the descending decomposition in the Bruhat decomposition of $\GL_{r+1}$ for the upper triangular Borel subgroup.
\end{Rem}

\subsection{Coset Representatives}

Let $(r_1\cdots r_a)$ be a general partition of $r+1$, i.e. we do not require $r_1\geq \cdots\geq r_a.$ Let $P$ denote the standard parabolic subgroup of $G$ whose Levi subgroup is $\GL_{r_1}\times \cdots \times  \GL_{r_a}$ embedded in $G$ via
\[
\GL_{r_1}\times \cdots \times \GL_{r_a}\to G: (g_1, \cdots, g_a)\mapsto \mathrm{diag}(g_1,\cdots, g_a).
\]

We use \textit{Young tableaux} to describe a nice set of coset representatives for $W(P)\bs W(G)$. For the partition $(r_1\cdots r_a)$, we require its Young diagram to have  $r_i$ boxes in the $i$th column. Notice that this is not the usual definition and we do \textit{not} require  $r_1\geq \cdots \geq r_a$.  (The definition in Section \ref{sec:root-theoretic} is different but they are consistent.) We fill in the boxes in the Young diagram with $k_0, k_1, \cdots, k_{r}$ from top to bottom in columns,  from the leftmost column to the rightmost column.  For example,
if the partition is $(342)$, then the Young tableau is
\[
\begin{ytableau}
k_0 & k_3 & k_7 \\
k_1 & k_4  & k_8 \\
k_2 & k_5\\
\none & k_6\\
\end{ytableau}.
\]
Notice that if we delete the first row, then the simple reflections with the remaining indices generate $W(P)$.

Now delete the entries in the first column. Define $D_{(r_1\cdots r_a)}$ to be the set of  such Young tableaux  with the following two conditions:
\begin{equation}\label{eq:condition-thm-2.4}
\begin{aligned}
&\text{(a) } k_i\in \bz \text{ and } 1\leq k_i\leq i+1.&\\
&\text{(b) }\text{The entries in each column are strictly increasing.}&
\end{aligned}
\end{equation}
For each $i$, define $\pi_{k_i}$ by Eq. (\ref{eq:pi-k}). Then condition (b) means that the lengths of $\pi_{k_i}$ in each column are non-increasing. For convenience, we usually write $\Pi_j$ for the product of $\pi_{k_i}$'s in the $j$th column, although this depends on the choices of $k_i$'s.

Given a Young tableau $Y\in D_{(r_1\cdots r_a)}$, by taking the product of $\pi_{k_i}$ in all the columns, we can define a Weyl group element $\Pi_2\cdots \Pi_a\in W(G)$. This defines a map $\pi_{(r_1\cdots r_a)}:D_{(r_1\cdots r_a)}\to W(P)\bs W(G)$.

\begin{Thm}\label{thm:coset-representative}
The map $\pi_{(r_1\cdots r_a)}$ is a bijection.
\end{Thm}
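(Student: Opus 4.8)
The plan is to establish two facts about $\pi_{(r_1\cdots r_a)}$: that it lifts to an \emph{injective} map of $D_{(r_1\cdots r_a)}$ into the set $[W(P)\bs W(G)]$ of minimal-length coset representatives, and that $|D_{(r_1\cdots r_a)}|=|W(P)\bs W(G)|$. Since a coset contains a unique minimal-length representative, the first fact makes $\pi_{(r_1\cdots r_a)}$ injective as a map into the set of cosets, and equality of cardinalities then promotes it to a bijection. Write $b_j=r_1+\cdots+r_j$; for $Y\in D_{(r_1\cdots r_a)}$ the $j$-th column ($j\ge2$) carries the entries $k_{b_{j-1}},\ldots,k_{b_j-1}$, one has $\Pi_2\cdots\Pi_a=\pi_{k_{r_1}}\pi_{k_{r_1+1}}\cdots\pi_{k_r}$, and the simple reflections generating $W(P)$ are exactly the $s_i$ with $i\notin\{b_1,\ldots,b_{a-1}\}$. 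Injectivity is immediate: setting $k_i=i+1$ for $1\le i\le r_1-1$ makes $\pi_{k_1}=\cdots=\pi_{k_{r_1-1}}=e$, so $\Pi_2\cdots\Pi_a=\pi(2,3,\ldots,r_1,k_{r_1},\ldots,k_r)$, and as $\pi$ is injective by Proposition \ref{prop:descending-decomposition}, distinct tableaux give distinct elements of $W(G)$.

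The substance is to show that $w:=\Pi_2\cdots\Pi_a$ is the minimal-length element of $W(P)w$, equivalently that $w^{-1}(\al_i)>0$ for every $i\notin\{b_1,\ldots,b_{a-1}\}$, equivalently that $w^{-1}$ is increasing on each value block $\{b_{j-1}+1,\ldots,b_j\}$. For this I would first record a combinatorial model for the permutation $\pi(k_1,\ldots,k_r)$. From $\pi(k_1,\ldots,k_r)=\pi(k_1,\ldots,k_{r-1})\cdot\pi_{k_r}$, the fact that $\pi(k_1,\ldots,k_{r-1})\in\la s_1,\ldots,s_{r-1}\ra$ fixes $r+1$, and the fact (from the Remark above) that $\pi_{k_r}$ is the cycle $(r+1,r,\ldots,k_r)$, a one-line computation shows that $\pi(k_1,\ldots,k_r)$ sends position $k_r$ to the value $r+1$ and that deleting this position and this value recovers $\pi(k_1,\ldots,k_{r-1})$. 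Iterating, $\pi(k_1,\ldots,k_r)$ is built from the identity by, at step $i=1,\ldots,r$, inserting the value $i+1$ at position $k_i$ and shifting the occupants of positions $\ge k_i$ one step to the right---the classical bijection between subexceedant functions and permutations. For $w=\Pi_2\cdots\Pi_a$ the first $r_1-1$ steps are trivial, so $w$ is obtained from the identity permutation of $\{1,\ldots,r_1\}$ by inserting, for $t=1,\ldots,r-r_1+1$, the value $r_1+t$ at position $k_{r_1+t-1}$.

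An insertion never changes the relative left-to-right order of the values already present, so block $1=\{1,\ldots,r_1\}$ stays sorted by position throughout. For a later block $j$, condition (b) says that its members are inserted at strictly increasing positions $k_{b_{j-1}}<k_{b_{j-1}+1}<\cdots<k_{b_j-1}$; hence, when the $s$-th member of block $j$ is inserted, the $s-1$ members already present occupy positions strictly to its left and are untouched by this insertion, so block $j$ too stays sorted by position. This is precisely the minimality condition, and combined with injectivity it shows that $Y\mapsto W(P)\cdot(\Pi_2\cdots\Pi_a)$ is injective. For the cardinality, within column $j$ the substitution $l_s=k_{b_{j-1}+s-1}-s$ turns conditions (a) and (b) into $0\le l_1\le\cdots\le l_{r_j}\le b_{j-1}$, so column $j$ contributes $\binom{b_j}{r_j}$ tableaux and $|D_{(r_1\cdots r_a)}|=\prod_{j=2}^a\binom{b_j}{r_j}=(r+1)!/(r_1!\cdots r_a!)=|W(P)\bs W(G)|$; an injection between finite sets of equal size is a bijection.

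The main obstacle is the minimality step, and it depends entirely on choosing the right combinatorial model for $\pi$, namely the successive-insertion description; once that is available, condition (b) does exactly the work needed, column by column. The most delicate point is the index bookkeeping---the shift by $r_1$ between variable indices and inserted values, and the fact that the members of any one block are inserted consecutively and in increasing order, which is what allows condition (b) to be applied one column at a time.
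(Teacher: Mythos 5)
Your proposal is correct, but it proceeds by a route essentially opposite to the paper's. The paper proves \emph{surjectivity}: starting from the descending decomposition of an arbitrary coset representative (Proposition~\ref{prop:descending-decomposition}), it invokes the braid-rearrangement Lemma~\ref{lem:coset-representative} to enforce condition (b) while pushing the extra simple reflections into $W(P)$, and then appeals to a cardinality count that is explicitly ``left to the reader.'' You instead prove \emph{injectivity into cosets}: using the successive-insertion (subexceedant-function) model of $\pi(k_1,\ldots,k_r)$ --- which the paper only alludes to in the Remark following Proposition~\ref{prop:descending-decomposition} --- you show that condition (b) forces each value block of $\Pi_2\cdots\Pi_a$ to remain sorted by position, so $\Pi_2\cdots\Pi_a$ is the minimal-length element of its coset. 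That minimality is precisely the content of the paper's Proposition~\ref{prop:minimal}, which the paper proves separately afterwards by a direct root computation; you derive it more combinatorially and let it do the work of injectivity, bypassing Lemma~\ref{lem:coset-representative} entirely. You also carry out the cardinality count $|D_{(r_1\cdots r_a)}|=\prod_{j=2}^a\binom{b_j}{r_j}=(r+1)!/(r_1!\cdots r_a!)$ that the paper omits. Both arguments ultimately rest on this count plus one direction (surjectivity for the paper, injectivity for you); your version has the virtue of subsuming Proposition~\ref{prop:minimal} as a byproduct and of being fully explicit, while the paper's Lemma~\ref{lem:coset-representative} route stays closer to the reduced-word bookkeeping it has already set up.
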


\begin{Ex}
If the partition is $(32)$, then the Young tableau is
\[
  \ytableaushort[k_]{03,14,2}.
\]
Deleting the entries in first column gives
\[
\begin{ytableau}
{ } & k_3  \\
{ } & k_4   \\
{ } &\none\\
\end{ytableau}.
\]
The assumptions are
\[
1\leq k_3\leq 4, \qquad 1\leq k_4\leq 5,\qquad k_3<k_4.
\]
All the elements of the form $\pi_{k_3}\pi_{k_4}$ give a set of coset representatives for the quotient $W(\GL_3\times \GL_2)\bs W(\GL_5)$. There are $10$ elements:
\begin{alignat*}{5}
&e&& && && \\
&s_3, && s_3s_4,&& && \\
&s_{32},   && s_{32}s_{4}, &&  s_{32}s_{43},&& \\
&s_{321},  && s_{321}s_{4}, &&  s_{321}s_{43}, && \ s_{321}s_{432}.\\
\end{alignat*}

\end{Ex}

\begin{Ex}
If the partition is $(323)$, then the Young tableau is
\[
\begin{ytableau}
k_0 & k_3 & k_5 \\
k_1 & k_4  & k_6 \\
k_2 &\none& k_7\\
\end{ytableau}.
\]
Deleting the entries in the first column gives
\[
\begin{ytableau}
{ }& k_3 & k_5 \\
{ } & k_4  & k_6 \\
{ } &\none& k_7\\
\end{ytableau}.
\]
Then the assumptions are
\[
1\leq k_3\leq 4, \qquad 1\leq k_4\leq 5, \qquad k_3<k_4,
\]
\[
1\leq k_5\leq 6, \qquad 1\leq k_6\leq 7, \qquad 1\leq k_7\leq 8, \qquad k_5<k_6<k_7.
\]
All the elements of the form $(\pi_{k_3}\pi_{k_4})(\pi_{k_5}\pi_{k_6}\pi_{k_7})$ where the lengths of $\pi_{k_3}$ and $\pi_{k_4}$ (resp. $\pi_{k_5}$, $\pi_{k_6}$ and $\pi_{k_7}$) are non-increasing give a set of coset representatives for $W(P)\bs W(G)$.
\end{Ex}

We need the following lemma for the proof of Theorem \ref{thm:coset-representative}.
\begin{Lem}\label{lem:coset-representative}
If $i\geq j$, then
\[
(s_i \cdots s_j)(s_{i+1}\cdots s_j)=s_{i+1}(s_i\cdots s_j)(s_{i+1}\cdots s_{j+1}).
\]
Moreover, if $i\geq j\geq k$, then
\[
(s_i \cdots s_j)(s_{i+1}\cdots s_k)=s_{i+1}(s_i\cdots s_k)(s_{i+1}\cdots s_{j+1}).
\]
\end{Lem}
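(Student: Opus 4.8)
The plan is to prove the second, more general identity; the first is then just the special case $k=j$. Fix the shorthand $\sigma_{[a,b]}:=s_a s_{a-1}\cdots s_b$ for $a\ge b$, with the convention $\sigma_{[a,b]}:=e$ when $b>a$. Three elementary facts will be used repeatedly: (i) $s_{i+1}$ commutes with $\sigma_{[m,n]}$ whenever $m\le i-1$, since $s_{i+1}$ commutes with every $s_p$ with $p\le i-1$; (ii) $s_a\,\sigma_{[a-1,b]}=\sigma_{[a,b]}$ for $a\ge b$; and (iii) the braid relation $s_{i+1}s_is_{i+1}=s_is_{i+1}s_i$. In particular $\sigma_{[i+1,k]}=s_{i+1}\sigma_{[i,k]}$ by (ii).

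I would prove
\[
\sigma_{[i,j]}\,\sigma_{[i+1,k]}=s_{i+1}\,\sigma_{[i,k]}\,\sigma_{[i+1,j+1]},\qquad i\ge j\ge k,
\]
by induction on $i-j\ge 0$. For the base case $i=j$: the left side is $s_j\,s_{j+1}\sigma_{[j,k]}$ and the right side is $s_{j+1}\,\sigma_{[j,k]}\,s_{j+1}$; writing $\sigma_{[j,k]}=s_j\sigma_{[j-1,k]}$ on the right, pulling the trailing $s_{j+1}$ leftward past $\sigma_{[j-1,k]}$ via (i), and then applying (iii) to $s_{j+1}s_js_{j+1}$, the right side becomes $s_js_{j+1}s_j\sigma_{[j-1,k]}=s_j\,s_{j+1}\sigma_{[j,k]}$, which is the left side. (When $j=k$ this is simply $s_js_{j+1}s_j=s_{j+1}s_js_{j+1}$.)

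For the inductive step $i>j$: using $\sigma_{[i+1,k]}=s_{i+1}\sigma_{[i,k]}$, then $\sigma_{[i,j]}=s_i\sigma_{[i-1,j]}$, and commuting $s_{i+1}$ to the left past $\sigma_{[i-1,j]}$ via (i), the left side becomes $s_is_{i+1}\,\sigma_{[i-1,j]}\,\sigma_{[i,k]}$. The inductive hypothesis, applied to $\sigma_{[i-1,j]}\,\sigma_{[i,k]}$ in the case $(i-1,j,k)$ — legitimate since $i-1\ge j\ge k$ — rewrites this as $s_is_{i+1}s_i\,\sigma_{[i-1,k]}\,\sigma_{[i,j+1]}$. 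Now replace $s_is_{i+1}s_i$ by $s_{i+1}s_is_{i+1}$ via (iii), commute $s_{i+1}$ rightward past $\sigma_{[i-1,k]}$ via (i), and finally apply (ii) in the forms $s_i\sigma_{[i-1,k]}=\sigma_{[i,k]}$ and $s_{i+1}\sigma_{[i,j+1]}=\sigma_{[i+1,j+1]}$; the result is $s_{i+1}\,\sigma_{[i,k]}\,\sigma_{[i+1,j+1]}$, completing the induction.

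Setting $k=j$ recovers the first identity. Since the whole argument is a formal consequence of the braid and commutation relations, there is no conceptual obstacle; the only care needed is the bookkeeping of indices — in particular checking at each step that the ranges appearing in $\sigma_{[a,b]}$ satisfy $a\ge b$ and that fact (i) is applied with $m\le i-1$, and handling the degenerate range $j=k$ in the base case. As an alternative, one could verify both identities purely as permutations, using that $s_a\cdots s_b$ acts on $\{1,\dots,r+2\}$ as the cycle sending $b\mapsto a+1$, sending $m\mapsto m-1$ for $b+1\le m\le a+1$, and fixing everything else; a short direct computation shows that both sides of each identity move only $\{k,\dots,i+2\}$ and act identically on it.
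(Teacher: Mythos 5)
Your proof is correct and follows essentially the same route as the paper's: an induction on $i-j$ whose step combines the braid relation $s_is_{i+1}s_i=s_{i+1}s_is_{i+1}$ with the commutation of $s_{i+1}$ past simple reflections of index at most $i-1$. The only organizational difference is that you induct directly on the general identity and recover the first as the case $k=j$ (peeling $s_i$, $s_{i+1}$ off the heads of the factors), whereas the paper inducts on the first identity (peeling $s_j$, $s_{j+1}$ off the tails) and then deduces the second; both versions check out.
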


\begin{proof}
The second statement follows immediately from the first statement.
We prove the first statement by induction on $i-j$. When $i-j=0$, the result is clear since $(s_i)(s_{i+1}s_i)=s_{i+1}(s_i)(s_{i+1})$. For general $i,j$, notice that
\[
\begin{aligned}
&(s_i \cdots s_j)(s_{i+1}\cdots s_j)\\
=&(s_i\cdots s_{j+1})s_j (s_{i+1}\cdots s_{j+2})s_{j+1}s_j\\
=&(s_i\cdots s_{j+1}) (s_{i+1}\cdots s_{j+2})s_js_{j+1}s_j\\
=&(s_i\cdots s_{j+1}) (s_{i+1}\cdots s_{j+2})s_{j+1}s_js_{j+1}\\
=&s_{i+1}(s_i\cdots s_{j+1}) (s_{i+1}\cdots s_{j+2})s_js_{j+1}\text{ (by induction)}\\
=&s_{i+1}(s_i\cdots s_{j+1}s_j) (s_{i+1}\cdots s_{j+2}s_{j+1}).\\
\end{aligned}
\]
This finishes the proof.
\end{proof}

\begin{proof}[Proof of Theorem \ref{thm:coset-representative}]
Let $w\in W(P)\bs W(G)$. We need to choose a coset representative satisfying conditions (a) and (b) in Eq. (\ref{eq:condition-thm-2.4}). By Proposition \ref{prop:descending-decomposition}, we can choose a coset representative satisfying condition (a). We only need to show (b). If $\pi_{k_i}$ and $\pi_{k_{i+1}}$ are in the same row of the Young tableau, and $k_i\geq k_{i+1}$, then by Lemma \ref{lem:coset-representative},
\[
\pi_{k_i}\pi_{k_{i+1}}=s_{i+1}\pi_{k'_i}\pi_{k'_{i+1}}
\]
where $k'_i=k_{i+1}, k'_{i+1}=k_i+1$. Notice that $s_{i+1}\in W(P)$ and $k'_{i+1}> k'_i$. Therefore, we can always choose a coset representative satisfying both conditions.

To show that map is bijective, we again count the cardinalities of both sides. This is a combinatorial exercise and the proof is left to the reader.
\end{proof}

By Casselman \cite{casselman1995introduction} Lemma 1.1.2, there is a unique element of minimal length in each coset of $W(P)\bs W(G)$. The coset representative $\Pi_2\cdots \Pi_a$ is indeed this unique element in its coset. For completeness, we give a proof here.

\begin{Prop}\label{prop:minimal}
The element $\Pi_2\cdots \Pi_a$ is of minimal length in its coset in $W(P)\bs W(G)$.
\end{Prop}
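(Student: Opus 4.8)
The plan is to show that for each column product $\Pi_j = \pi_{k_i}\cdots\pi_{k_{i'}}$ (with the $k$'s appearing in that column, and with strictly increasing entries, i.e. non-increasing lengths of the cycles), no simple reflection $s_m$ with $m$ in the first row of the Young tableau can reduce the length of $\Pi_2\cdots\Pi_a$ when multiplied on the left. By the length criterion recalled in the proof of Proposition~\ref{prop:descending-decomposition}, it suffices to prove that $(\Pi_2\cdots\Pi_a)^{-1}(\al_m)>0$ for every such $m$, where $\al_m$ runs over the simple roots generating $W(P)$ (equivalently: $\Pi_2\cdots\Pi_a$ sends no positive root of $M_P$ to a negative root, which since $W(P)$ is generated by those $s_m$ and the length criterion can be applied repeatedly, is the same as the minimality of $\Pi_2\cdots\Pi_a$ in its coset).

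First I would set up the combinatorial bookkeeping: write $\Pi_j = \pi_{k_{i_1}}\pi_{k_{i_2}}\cdots$ for the $j$th column, recall $\pi_{k}=s_i s_{i-1}\cdots s_k$ corresponds to the descending cycle $(i{+}1,i,\dots,k)$ in $S_{r+1}$, and translate $\Pi_2\cdots\Pi_a$ into an explicit permutation by composing these cycles column by column. The key structural fact I would isolate is that, because the cycle lengths are non-increasing down each column and because of the way the indices $k_0,\dots,k_r$ are laid out columnwise in the Young diagram, the permutation $\Pi_2\cdots\Pi_a$ has a controlled effect on the "row blocks" of indices: the indices filling row $1$ of the tableau (those generating $W(P)$ together with the parabolic) are precisely those moved least, and one can track where each such index goes.

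Then I would verify the sign condition directly. For a simple root $\al_m = e_m - e_{m+1}$ with $s_m\in W(P)$ (i.e. $m$ and $m+1$ lie in the same column and $m$ is in the first row of that column, or more precisely $s_m$ is among the reflections generating the Levi), one computes $(\Pi_2\cdots\Pi_a)^{-1}$ applied to $\al_m$ and checks it is a positive root — equivalently, that $\Pi_2\cdots\Pi_a$ viewed as a permutation is increasing on the set $\{m, m+1\}$ for all relevant $m$. This should follow from Lemma~\ref{lem:coset-representative}-type manipulations together with the strict-increase condition (b): the non-increasing-length discipline in each column is exactly engineered so that the "collisions" which would create an inversion among first-row indices never occur. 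An alternative, cleaner route: induct on $a$, peeling off the last column $\Pi_a$; one shows $\Pi_a$ is minimal in $W(P')\bs W(G)$ for the appropriate maximal parabolic $P'$ (this is essentially the single-cycle-product case, handled as in Proposition~\ref{prop:descending-decomposition}), then uses the standard fact that $\ell(\Pi_2\cdots\Pi_{a-1}\cdot\Pi_a)=\ell(\Pi_2\cdots\Pi_{a-1})+\ell(\Pi_a)$ when the factors live in compatible coset spaces, reducing to the inductive hypothesis for the partition $(r_1\cdots r_{a-1})$.

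The main obstacle I expect is bookkeeping the interaction between different columns: a single cycle $\pi_k$ is trivially minimal in its own maximal-parabolic coset, but when we multiply $\Pi_2\cdots\Pi_a$ the later columns act on indices that earlier columns have already permuted, so one must argue that this composition does not introduce an inversion among the first-row indices of $P$. Making the induction on $a$ rigorous — precisely identifying the intermediate parabolic and checking the additivity of lengths $\ell(\Pi_2\cdots\Pi_{a-1}\Pi_a) = \ell(\Pi_2\cdots\Pi_{a-1}) + \ell(\Pi_a)$ — is where the real work lies; once that additivity is in hand, minimality in the coset follows because any shortening left-multiplication by an $s_m\in W(P)$ would contradict either the inductive minimality of $\Pi_2\cdots\Pi_{a-1}$ or the minimality of the last cycle block $\Pi_a$.
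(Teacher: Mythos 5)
Your starting point is the same as the paper's: reduce minimality to the statement that $w^{-1}(\al_l)>0$ for every simple root $\al_l$ of the Levi of $P$ (equivalently, $\ell(s_lw)>\ell(w)$ for the simple generators of $W(P)$). But in both of the routes you sketch, the verification of this condition is deferred rather than carried out, and that verification is the entire content of the proposition. In your first route you assert that the no-inversion property "should follow" from condition (b); the paper actually proves it by writing out the inversion set of $w^{-1}$ explicitly from the reduced word of Proposition \ref{prop:descending-decomposition}: the elements of $\Phi^-(w^{-1})$ are exactly the roots $\pi_{k_{r_1}}\cdots\pi_{k_{j+1}}(\al_j+\cdots+\al_k)$, and one checks in three cases ($j<l$, $j>l$, $j=l$) that none of these equals $\al_l$. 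The first two cases are trivial coefficient counts; condition (b) enters only in the case $j=l$, where $\al_l\in\Delta_\lam$ forces $k_{l-1}<k_l$, whence $(s_{l-1}\cdots s_{k_{l-1}})(\al_l+\cdots+\al_k)$ has coefficient $1$ on both $\al_{l-1}$ and $\al_l$ and the remaining cycles (lying in $\la s_1,\dots,s_{l-2}\ra$) cannot destroy this. That short computation is what your proposal is missing. (Also be careful with the direction: for $W(P)\bs W(G)$ the relevant condition is that $w^{-1}$, not $w$, has no descent at the Levi positions.)

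Your second route -- peel off $\Pi_a$, use length additivity, and induct on the number of columns -- could be made to work, but not by the argument you give. Length additivity $\ell(\Pi_2\cdots\Pi_{a-1}\Pi_a)=\ell(\Pi_2\cdots\Pi_{a-1})+\ell(\Pi_a)$ is already free from Proposition \ref{prop:descending-decomposition} (the whole word is reduced), and it does not by itself prevent a left multiplication by some $s_m\in W(P)$ from shortening the product: $\ell(s_m uv)<\ell(uv)$ gives no direct information about $s_mu$ or about $v$ separately. What you would actually need is the transitivity of minimal coset representatives through the intermediate maximal parabolic $P'$ with Levi $\GL_{r_1+\cdots+r_{a-1}}\times\GL_{r_a}$: show that $\Pi_a$ is the minimal representative of its coset in $W(P')\bs W(G)$ and that $\Pi_2\cdots\Pi_{a-1}$ is minimal in $W(P)\bs W(P')$ (the inductive hypothesis inside $\GL_{r_1+\cdots+r_{a-1}}$). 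The single-column base case is not "handled as in Proposition \ref{prop:descending-decomposition}" -- that proposition only shows the word is reduced -- and it again requires precisely the $j=l$ computation above. So either way the same concrete verification must be supplied.
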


\begin{proof}

We show that the coset representatives in Theorem \ref{thm:coset-representative} are of minimal lengths in each coset. We only have to show that for each $w$, $w^{-1}(\Delta_\lam) >0$ (\cite{casselman1995introduction} Section 1).
Equivalently, we need to show  that, if $\al_l\in \Delta_\lam$ (in other words, $l\neq r_1,r_1+r_2,\cdots, r_1+\cdots+r_{k-1}$), then $\al_l\notin \Phi^-(w^{-1})$.

Recall that if $w=s_{i_1}\cdots s_{i_N}$ is a reduced decomposition, then
\[
\{\al>0: w\al <0\}=\{\al_{i_N},s_{i_N}\al_{i_{N-1}},\cdots, s_{i_N}s_{i_{N-1}}\cdots s_{i_2}\al_{i_1} \}.
\]

Write $w=\pi_{k_{r_1}}\cdots \pi_{k_r}\in W(P)\bs W(G)$.  Thus it suffices to show that, given $\al_l\in \Delta_\lam$,
\begin{equation}\label{eq:app-minimal}
\pi_{k_{r_1}}\cdots \pi_{k_{j+1}}(\al_{j}+\cdots +\al_k)\neq \al_l
\end{equation}
for all $r_1\leq j\leq r$ and all possible $k$. Indeed, if $j<l$, then $\al_l$ cannot appear on the left-hand side. If $j>l$, then the coefficient of $\al_j$ in the left-hand side is $1$. In these two cases, Eq. (\ref{eq:app-minimal}) is  true. If $j=l$, then $k_{l-1}<k_l$ (since $\al_l\in \Delta_\lam$ and $k_l$ is not on the top row in the Young tableau). Therefore, for $k\geq k_l$,
\[
(s_{l-1}\cdots s_{k_{l-1}})(\al_{l}+\cdots +\al_k)=\al_l+\al_{l-1}+\cdots+\al_{k}+\al_{k-1}.
\]
The expression before $(s_{l-1}\cdots s_{k_{l-1}})$ is in $\la s_1,\cdots, s_{l-2} \ra$. Thus the coefficients of $\al_{l-1},\al_l$ in $\pi_{k_{r_1}}\cdots \pi_{k_{j+1}}(\al_{j}+\cdots +\al_k)$ are both $1$. Thus Eq. (\ref{eq:app-minimal}) also holds. This completes the proof.
\end{proof}

\section{Root-theoretic Results}\label{sec:root-theoretic}

The goal of this section is to prove a root-theoretic result, which is used in the proof of Theorem \ref{thm:semi-whittaker}.

Fix a partition $\mu=(t_1\cdots t_b)$ of $r+1$ such that $t_1\geq \cdots \geq t_b>0$. Let $\mu^\top=(r_1\cdots r_a)$ be its transpose. Let $P_{\mu^\top}$ be the parabolic subgroup whose Levi subgroup $M_{\mu^\top}\cong \GL_{r_1}\times \cdots \times \GL_{r_a}$.  We represent $w\in W(P_{\mu^\top})\bs W(G)$ by using the coset representatives in Theorem \ref{thm:coset-representative}.

 Let $\lam=(p_1 \cdots p_m)$ be a general partition of $r+1$. Let $\lam^\top=(q_1\cdots q_n)$ be its transpose.  Here $q_i=\#\{j:p_j\geq i\}$. In particular, $q_1=m$.  Let $P_{\lambda}$ denote the standard parabolic subgroup of $G$ whose Levi subgroup  is $M_\lambda\cong \GL_{p_1}\times \cdots \times  \GL_{p_m}$.

Now we are ready to state the main result in this section.

\begin{Thm}\label{thm:root-theoretic}
Let $\Delta_\lambda$ denote the set of simple roots contained in $M_\lambda$. Let $\Phi_{\mu^\top}^-$ denote the set of negative roots contained in $M_{\mu^\top}$.
\begin{enumerate}[\normalfont(1)]
\item If there is an index $l$ such that
        \[
            p_1+\cdots +p_l> t_1+\cdots+t_l,
            \]
            then for any $w\in W(P_{\mu^\top})\bs W(G)$, there exists $\al\in \Delta_\lam$ such that $w(\al)>0$.
    \item If $\lam=\mu$, then there exists a unique $w_\mu\in W(P_{\mu^\top})\bs W(G)$ such that $w_\mu(\al)\in \Phi^--\Phi^-_{\mu^\top}$ for all $\al\in \Delta_\mu$; and $w(\Delta_\mu)\cap \Phi^+\neq \emptyset$ for all $w\neq w_\mu$.
    \end{enumerate}
\end{Thm}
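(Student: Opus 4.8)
The plan is to translate everything into the combinatorial language of Young tableaux provided by Theorem \ref{thm:coset-representative}, and then reduce both parts to statements about the entries $k_i$ and the shape $\mu^\top$. Recall that $w\in W(P_{\mu^\top})\bs W(G)$ is encoded by a filling $Y$ of the Young diagram of $\mu^\top$ (with $r_j$ boxes in the $j$th column), with the top row deleted; the entries $k_i$ satisfy $1\le k_i\le i+1$ and are strictly increasing down each column. The key computation I would carry out first is a closed formula (or at least a controlled description) for $w(\al_l)$ where $\al_l=\al_l+\cdots$ is a simple root, $l\notin\{r_1,r_1+r_2,\dots\}$ so that $\al_l\in\Delta_\lam$ when $\lambda=\mu$ — more generally for $\al_l\in\Delta_\lam$ for arbitrary $\lambda$. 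Using $w=\pi_{k_{r_1}}\cdots\pi_{k_r}$ and the fact that $\pi_{k_i}\in\langle s_1,\dots,s_i\rangle$, one sees that $w(\al_l)$ depends only on the cycles $\pi_{k_i}$ with $i\ge l$, and the sign of $w(\al_l)$ is governed by whether the simple reflection $s_l$ appears in one of those cycles, i.e. whether some $k_i\le l$ for $i\ge l$. So I would isolate the criterion: $w(\al_l)<0$ iff there exists a box with label $k_i$ satisfying $k_i\le l\le i$ — in other words, a box in column $\ge 2$ whose entry is $\le l$ and whose index is $\ge l$. I would want to reformulate this purely in terms of the tableau shape and the position of the ``small'' entries.

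For part (1), the hypothesis $p_1+\cdots+p_l>t_1+\cdots+t_l$ means $\lambda$ is not $\le\mu$ in the dominance order; equivalently $\lam^\top$ is not $\ge\mu^\top$. The goal is: for every filling $Y$, at least one $\al\in\Delta_\lam$ has $w(\al)>0$. I would argue by contradiction: suppose $w(\al)<0$ for every $\al\in\Delta_\lam$. The set $\Delta_\lam$ is a union of ``intervals'' of simple roots, one interval of length $p_j-1$ for each part $p_j$. The condition ``$w(\al)<0$ for every simple root in a given interval of consecutive indices'' forces, via the criterion above, a whole cascade of constraints on the entries $k_i$: roughly, that the entries must decrease as one moves along, which because of the strict-increase-down-columns condition forces the columns of $\mu^\top$ to be long enough — precisely it forces $\lam^\top\ge\mu^\top$ columnwise up to index $l$, contradicting the hypothesis. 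Concretely I expect to show that if all simple roots in $\Delta_\lam$ go negative then one can read off from the locations of the cycles that $q_1+\cdots+q_j\ge$ (the corresponding partial sums for $\mu^\top$) for the relevant range, which is exactly the dominance inequality that the hypothesis denies. This counting/bookkeeping with the intervals of $\Delta_\lam$ against the columns of $\mu^\top$ is where the real work lies.

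For part (2), with $\lambda=\mu$, I would first exhibit the candidate $w_\mu$ explicitly as a tableau: it should be the filling in which the $j$th column ($j\ge 2$) carries the cycles of maximal possible length consistent with the strict-increase condition, i.e. the ``densest'' or ``most negative'' filling; equivalently $w_\mu$ is the longest element of $[W(P_{\mu^\top})\bs W(G)]$, namely the one sending the positive roots of $M_{\mu^\top}$'s ``complement'' as far negative as possible. Using the criterion from the first paragraph I would verify $w_\mu(\al)\in\Phi^--\Phi^-_{\mu^\top}$ for all $\al\in\Delta_\mu$ — the membership in $\Phi^-$ is the negativity criterion, and the refinement that it avoids $\Phi^-_{\mu^\top}$ comes from tracking which simple roots occur in $w_\mu(\al)$, specifically that $\al_l$ being in row $\ge 2$ of the $\mu$-tableau means its image acquires a component on a root straddling two columns. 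For uniqueness, I would show that any filling other than $w_\mu$ has some cycle strictly shorter than in $w_\mu$, and pinpoint a simple root $\al\in\Delta_\mu$ on which this ``defect'' makes $w(\al)>0$ — so $w(\Delta_\mu)\cap\Phi^+\ne\emptyset$. The main obstacle throughout is making the passage ``sign of $w(\al_l)$'' $\leftrightarrow$ ``combinatorics of the tableau entries'' completely precise and then pushing the dominance-order bookkeeping in part (1) through uniformly for all shapes $\mu$ and all $\lambda\not\le\mu$; once that dictionary is set up, parts (1) and (2) should both follow by the same kind of argument, with part (2) additionally requiring the finer $\Phi^-$-versus-$\Phi^-_{\mu^\top}$ distinction.
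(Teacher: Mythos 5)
Your translation of the problem into tableau combinatorics is the right high-level strategy and matches the paper's, but the two concrete devices you commit to — the sign criterion and the identification of $w_\mu$ — are both wrong, and repairing them is essentially the whole content of the paper's proof.

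First, the criterion ``$w(\al_l)<0$ iff some box has $k_i\le l\le i$'' is false. Take $G=\GL_3$, $\mu^\top=(1^3)$ (so $P=B$ and every filling is allowed), and $w=\pi_1\pi_2=s_1s_2$, i.e.\ $(k_1,k_2)=(1,2)$. Then $k_1=1\le 1\le 1$, so your criterion predicts $w(\al_1)<0$; but $s_1s_2(\al_1)=s_1(\al_1+\al_2)=\al_2>0$. The issue is that a cycle $\pi_{k_j}$ with $k_j<l\le j$ does not negate $\al_l$ — it shifts it to $\al_{l-1}$ — so the relevant index drifts as you pass from right to left through the cycles, and once a cycle produces a positive non-simple root everything after stays positive (the paper's Lemma \ref{lem:positive-root}). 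The paper handles this drift by introducing, for each $\al\in\Delta_\lambda$ going negative, the \emph{first} column index $l$ at which negativity occurs, partitioning $\Delta_\lambda=\bigsqcup R_l$, and proving structural constraints on the $R_l$ (Lemmas \ref{lem:root-less-than-cycle}--\ref{lem:key-obser}); your static per-box condition cannot see this.

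Second, $w_\mu$ is \emph{not} the ``densest'' filling, i.e.\ not the longest element of $[W(P_{\mu^\top})\bs W(G)]$. In the paper's own worked example $\mu=\lambda=(433)$, $\mu^\top=(3331)$, the answer is $(k_3,k_4,k_5)=(1,3,5)$ for the second column (lengths $3,2,1$), whereas the longest admissible filling of that column would be $(k_3,k_4,k_5)=(1,2,3)$ (lengths $3,3,3$), which is strictly longer and is a legitimate coset representative. So your proposed candidate for $w_\mu$ is the wrong element, and the uniqueness argument you sketch (``any other filling has a shorter cycle somewhere'') is built on it. The paper instead pins down $w_\mu$ column by column from the last column inward: in the last column the entries are forced to be $l_i=1+(i-1)p_1$, after which the problem reduces to the same statement for $\GL_{r+1-r_a}$ with $\mu$ truncated, giving both existence and uniqueness by induction. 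That inductive construction, not maximality of length, is what makes the $\Phi^-\setminus\Phi^-_{\mu^\top}$ refinement fall out (the coefficient of $\al_{x+1}$ in each $\Pi_a(\al_{l_i})$ is $-1$).

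So while the dominance-order bookkeeping you sketch for part (1) is indeed what the paper ultimately does (Lemma \ref{lem:traspose-unipotent} plays exactly that role), the ``dictionary'' you say you would set up first does not hold as stated, and without the drift-tracking machinery and the correct inductive construction of $w_\mu$, neither part (1) nor part (2) goes through.
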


\subsection{Transpose of partitions}
The relation between $\lam$ and $\lam^\top$ can be easily read from the Young diagram associated with $\lam$. From now on, the Young diagram associated with  $\lam$ is the one that has $m$ rows, and $p_i$ boxes in the $i$th row. Here the definition also applies to general partitions, i.e. we do not assume the row sizes are weakly decreasing.  We also label the rows and columns with the entries they represent.  Here are examples with $\lam=(3221)$, $\lam^\top=(431)$ and $\lam'=(313)$, $\lam'^\top=(322)$:
\[
\lam:
\def\mca#1{\multicolumn{1}{c}{#1}}
\def\mcb#1{\multicolumn{1}{c|}{#1}}
\renewcommand{\arraystretch}{1.4}
\begin{tabular}{c|c|c|c|}
  \mca{}  & \mca{$q_1$} & \mca{$q_2$} & \mca{$q_3$} \\\cline{2-4}
  \mcb{$p_1$}   &   &     &     \\\cline{2-4}
  \mcb{$p_2$}   &     &     & \multicolumn{1}{c}{}   \\\cline{2-3}
  \mcb{$p_3$} &     &     & \multicolumn{1}{c}{} \\\cline{2-3}
    \mcb{$p_4$} &     &   \multicolumn{1}{c}{}  & \multicolumn{1}{c}{}\\\cline{2-2}
\end{tabular}\qquad\qquad
\lam':
\def\mca#1{\multicolumn{1}{c}{#1}}
\def\mcb#1{\multicolumn{1}{c|}{#1}}
\renewcommand{\arraystretch}{1.4}
\begin{tabular}{c|c|c|c|}
  \mca{}  & \mca{$q_1$} & \mca{$q_2$} & \mca{$q_3$} \\\cline{2-4}
  \mcb{$p_1$}   &   &     &     \\\cline{2-4}
  \mcb{$p_2$}   &     &  \multicolumn{1}{c}{}   & \multicolumn{1}{c}{}   \\\cline{2-4}
  \mcb{$p_3$} &     &     & \\\cline{2-4}
\end{tabular}
\]

\begin{Lem}\label{lem:traspose-unipotent}\
\begin{enumerate}[\normalfont(1)]
\item
For $1\leq l\leq m$ and $1\leq k\leq n$,
\[
p_1+\cdots+p_l\leq l\cdot k+ q_{k+1}+\cdots+q_n.
\]
\item If furthermore, $p_1\geq \cdots \geq p_m >0$ and $k=p_{l+1}$, then equality holds in (1).
\end{enumerate}
\end{Lem}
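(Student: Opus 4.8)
The plan is to interpret both sides of the inequality in (1) as counts of boxes in the Young diagram of $\lambda$, split according to the horizontal line below row $l$ and the vertical line after column $k$. Concretely, I would partition the boxes of the diagram into four regions by these two lines: the top-left block $A$ (rows $\le l$, columns $\le k$), the top-right block $B$ (rows $\le l$, columns $> k$), the bottom-left block $C$ (rows $> l$, columns $\le k$), and the bottom-right block $D$ (rows $> l$, columns $> k$). The left-hand side $p_1+\cdots+p_l$ is the total number of boxes in rows $1,\dots,l$, i.e. $|A|+|B|$. On the right-hand side, $l\cdot k$ is the number of boxes in an $l\times k$ rectangle, which contains $A$; and $q_{k+1}+\cdots+q_n$ counts the boxes in columns $k+1,\dots,n$, i.e. $|B|+|D|$. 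Thus the claimed inequality is equivalent to $|A|+|B|\le l k + |B| + |D|$, i.e. $|A| \le l k + |D|$, which holds because $|A|\le lk$ trivially (the region $A$ sits inside an $l\times k$ rectangle) and $|D|\ge 0$.

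For part (2), with $p_1\ge\cdots\ge p_m>0$ and $k=p_{l+1}$, I would check that both inequalities used above become equalities. First, since the partition is now genuine (weakly decreasing rows), every row among $1,\dots,l$ has length $p_i \ge p_{l+1}=k$, so region $A$ is the full $l\times k$ rectangle and $|A|=lk$. Second, region $D$ consists of boxes in rows $>l$ and columns $>k=p_{l+1}$; but row $l+1$ has exactly $p_{l+1}=k$ boxes and rows $l+2,\dots,m$ are no longer, so no box lies strictly to the right of column $k$ in those rows, giving $|D|=0$. Hence $|A|=lk+|D|$, and equality propagates back to (1).

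The only mild subtlety — and the step I would be most careful about — is the bookkeeping when $k=n$ (so that the sum $q_{k+1}+\cdots+q_n$ is empty) or when $l=m$, and the general-partition caveat in (1): since rows need not be weakly decreasing there, region $A$ need not be a full rectangle, but the inequality $|A|\le lk$ still holds simply because $A$ is contained in the $l\times k$ grid of cells regardless of which of those cells actually belong to the diagram. I would state this containment explicitly, note that the column-sum interpretation $q_j=\#\{i:p_i\ge j\}$ is exactly the number of boxes in column $j$ and hence valid for general partitions, and then the two displayed claims follow. No nontrivial calculation is needed; the content is entirely in choosing the right geometric decomposition of the diagram.
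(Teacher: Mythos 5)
Your proof is correct, and it is the same diagram-counting argument in spirit, but your organization is genuinely different from the paper's. The paper proves part (1) by first overcounting: $p_1+\cdots+p_l\le\sum_{i=1}^n\min(q_i,l)$ (in each column, count at most $l$ boxes), then splitting this sum at $i=k$ and bounding the first $k$ terms by $lk$ and the remaining terms by $q_{k+1}+\cdots+q_n$; for part (2), with $p$ weakly decreasing, the first inequality becomes an equality, and $k=p_{l+1}$ forces $\min(q_i,l)=l$ for $i\le k$ and $\min(q_i,l)=q_i$ for $i>k$. Your four-quadrant decomposition dispenses with the intermediate $\min$-sum entirely: $p_1+\cdots+p_l=|A|+|B|$ and $q_{k+1}+\cdots+q_n=|B|+|D|$ are exact identities valid for general (not necessarily weakly decreasing) partitions, and the only inequality used is the trivial containment $|A|\le lk$. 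This avoids the one not-immediately-obvious step in the paper's argument, namely that the per-column count $\#\{j\le l:p_j\ge i\}$ is bounded by $\min(q_i,l)$ when the $p_j$ need not be sorted; and it makes the equality case in (2) transparent ($A$ is the full rectangle, $D$ is empty). Same split at column $k$, cleaner bookkeeping.
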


\begin{proof}
(1)\ For $1\leq l\leq m$ and $1\leq k\leq n$,
\[
\begin{aligned}
&p_1+\cdots+p_l\\
\leq & \sum_{i=1}^n \min(q_i,l) \\
=& \sum_{i=1}^k \min(q_i,l)+\sum_{i={k+1}}^n \min(q_i,l)\\
\leq & l\cdot k+ q_{k+1}+\cdots+q_n.
\end{aligned}
\]
(2)\ If $p_1\geq \cdots \geq p_m >0$, then $q_i=\max\{j:p_j\geq i\}$ and
\[
p_1+\cdots+p_l
=  \sum_{i=1}^n \min(q_i,l).
\]
If $k=p_{l+1}=\max\{j: q_j\geq l+1\}$, then
\[
\min(q_i,l)=\left\{
\begin{aligned}
&l&\text{ if }1\leq i\leq p_{l+1},\\
& q_i &\text{ if }i\geq p_{l+1}+1.
\end{aligned}
\right.
\]
Thus the equality holds.
\end{proof}

\subsection{The action on simple roots}

Let us start with the following observations for general $w\in W(G)$. We represent $w$ by using the descending decomposition in Proposition \ref{prop:descending-decomposition}.

\begin{Lem}\label{lem:root-calculation}
If $i\leq j$, then
\[
s_j s_{j-1}\cdots s_i (\al_k)=\left\{
\begin{aligned}
&\al_k & \text{ if }k\leq i-2 \text{ or }k\geq j+2,\\
&\al_{i-1}+ \al_i+\cdots +\al_j &\text{ if }k=i-1,\\
&-(\al_i+\cdots +\al_j)&\text{ if }k=i,\\
&\al_{k-1} & \text{ if }j\geq k\geq i+1,\\
&\al_j+\al_{j+1} &\text{ if }k=j+1.\\
\end{aligned}
\right.
\]
\end{Lem}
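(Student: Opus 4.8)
The statement is a direct computation: applying the product of simple reflections $s_j s_{j-1} \cdots s_i$ (with $i \le j$) to a simple root $\al_k$. The cleanest approach is induction on $j - i$, peeling off the reflections one at a time from the \emph{left}. The plan is to set $\sigma_{i,j} = s_j s_{j-1} \cdots s_i$ and write $\sigma_{i,j} = s_j \sigma_{i, j-1}$ (or alternatively $\sigma_{i,j} = \sigma_{i+1,j} s_i$). I would first record the base case, where the product is a single reflection $s_i$: we have $s_i(\al_k) = \al_k$ when $|k - i| \ge 2$, $s_i(\al_{i-1}) = \al_{i-1} + \al_i$, $s_i(\al_{i+1}) = \al_i + \al_{i+1}$, and $s_i(\al_i) = -\al_i$; these are exactly the type-$A$ reflection formulas and match the claimed table when $i = j$.

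For the inductive step I would take the claimed formula for $\sigma_{i, j-1}(\al_k)$ as known and apply $s_j$ on the left, going case-by-case on the value of $k$. The key observation that makes this painless is that $s_j$ acts nontrivially only on $\al_{j-1}, \al_j, \al_{j+1}$ (and on roots involving these), so in most cases $s_j$ either fixes the intermediate result or shifts an index by one in a controlled way. For instance: if $k \le i - 2$ then $\sigma_{i,j-1}(\al_k) = \al_k$ with $k \le j - 2$, so $s_j$ fixes it; if $k = i - 1$ then $\sigma_{i,j-1}(\al_{i-1}) = \al_{i-1} + \cdots + \al_{j-1}$, and applying $s_j$ sends $\al_{j-1} \mapsto \al_{j-1} + \al_j$, giving $\al_{i-1} + \cdots + \al_j$; if $k = i$ then $\sigma_{i,j-1}(\al_i) = -(\al_i + \cdots + \al_{j-1})$ and $s_j$ again adds $\al_j$ to the $\al_{j-1}$-term, yielding $-(\al_i + \cdots + \al_j)$; if $i + 1 \le k \le j - 1$ then $\sigma_{i,j-1}(\al_k) = \al_{k-1}$, which $s_j$ fixes unless $k - 1 = j - 1$, i.e.\ $k = j$, which is handled separately, so for $k \le j-1$ it stays $\al_{k-1}$; the remaining cases $k = j$ and $k = j+1$ and $k \ge j + 2$ are checked the same way using $\sigma_{i,j-1}(\al_j) = \al_{j-1} + \al_j$ wait — more carefully, $\sigma_{i,j-1}$ involves only $s_i, \dots, s_{j-1}$, so $\sigma_{i,j-1}(\al_j) = \al_j$ for $j \ge i+1$, hence $s_j(\al_j) = -\al_j$, and then one must re-examine: this suggests it is cleaner to instead induct by peeling $s_i$ off the \emph{right}, i.e.\ $\sigma_{i,j} = \sigma_{i+1,j}\, s_i$, so that $s_i$ acts first on $\al_k$ and then the inductive hypothesis for $\sigma_{i+1,j}$ (which has $i$ replaced by $i+1$) applies to the result. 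I would choose whichever of the two reductions makes the bookkeeping shortest and carry it out uniformly.

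There is no real obstacle here — the lemma is elementary and the only risk is a sign error or an off-by-one in the index ranges, so the main ``difficulty'' is simply organizing the case analysis so that each case of the output table is reached from exactly one case of the inductive hypothesis without overlap or omission. I would double-check consistency by verifying that the map is a bijection on $\{\al_{i-1}, \dots, \al_{j+1}\}$ (as it must be, being an element of the Weyl group acting on this range) and that lengths come out right: $\sigma_{i,j}$ has length $j - i + 1$ and should send exactly that many positive roots to negative ones, consistent with $\al_i \mapsto -(\al_i + \cdots + \al_j)$ being the only simple root sent to a negative root. A small remark worth inserting: the formula also follows instantly from the permutation picture in the Remark after Proposition \ref{prop:descending-decomposition}, since $\sigma_{i,j}$ corresponds to the cycle $(j+1, j, \dots, i)$ in $S_{r+1}$ and $\al_k = e_k - e_{k+1}$, so one can alternatively just track where $e_k$ and $e_{k+1}$ go — I would include this as the one-line conceptual proof and relegate the induction to a verification.
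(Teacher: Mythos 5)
The paper disposes of this lemma with the single sentence ``A straightforward calculation,'' so your elaboration is in the same spirit, and your closing observation---that $s_j s_{j-1}\cdots s_i$ corresponds to the cycle $(j+1,j,\ldots,i)$ in $S_{r+1}$ and one just tracks $e_k - e_{k+1}$---is the cleanest way to make that calculation genuinely straightforward. That one-liner is correct and sufficient.

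However, there is a concrete computational error in the middle of your inductive discussion. You first (correctly) wrote $\sigma_{i,j-1}(\al_j)=\al_{j-1}+\al_j$, then ``corrected'' yourself to $\sigma_{i,j-1}(\al_j)=\al_j$ on the grounds that $\sigma_{i,j-1}$ involves only $s_i,\ldots,s_{j-1}$. That correction is wrong: $s_{j-1}$ does \emph{not} fix $\al_j$; rather $s_{j-1}(\al_j)=\al_{j-1}+\al_j$, and the remaining factors $s_i,\ldots,s_{j-2}$ fix $\al_{j-1}+\al_j$, so the original value $\al_{j-1}+\al_j$ was right. With that value the left-peeling induction closes without trouble: $s_j(\al_{j-1}+\al_j)=(\al_{j-1}+\al_j)-\al_j=\al_{j-1}$, matching the $k=j$ entry in the table. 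So your pivot to ``peel $s_i$ off the right'' was prompted by a slip, not by an actual obstruction---either induction works, and of course the permutation argument sidesteps the bookkeeping entirely.
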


\begin{proof}
A straightforward calculation.
\end{proof}

\begin{Lem}\label{lem:negative-root}
Suppose $i\leq j$. If $\pi_{k_j}(\al_l)$ is a negative root, so is $\pi_{k_i}\cdots\pi_{k_j}(\al_l)$.
\end{Lem}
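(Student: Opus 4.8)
The plan is to prove Lemma~\ref{lem:negative-root} by a downward induction on $i$, starting from $i=j$, where the statement is vacuous. The key structural fact is that the cycles $\pi_{k_i}=s_i s_{i-1}\cdots s_{k_i}$ involved are consecutive strings of simple reflections whose index sets interlace in a controlled way: the cycle $\pi_{k_{j}}$ lives in the indices $\{k_j,\dots,j\}$, while $\pi_{k_i}$ (for $i<j$) lives in $\{k_i,\dots,i\}$, and since $i\le j-1$ these strings overlap only at their top. So the inductive step amounts to understanding what $\pi_{k_{i}}$ does to a root of the form $\pi_{k_{i+1}}\cdots\pi_{k_j}(\al_l)$, given the inductive hypothesis that the latter is negative.

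First I would set up the induction: assume $\beta := \pi_{k_{i+1}}\cdots\pi_{k_j}(\al_l)$ is a negative root, and show $\pi_{k_i}(\beta)$ is still negative. Write $\beta = -\sum c_m \al_m$ with all $c_m\ge 0$. Because $\pi_{k_{i+1}}\cdots\pi_{k_j}\in\langle s_{k_{i+1}},\dots,s_j\rangle\subseteq\langle s_2,\dots,s_j\rangle$ — more precisely it only moves indices $\ge k_{i+1}$, hence $\ge 2$ if we track carefully, but what I really need is that the simple roots $\al_1,\dots$ with small index appearing in $\beta$ are limited — I would argue that $\beta$, being in the span of $\{\al_{k_{i+1}-1},\dots,\al_j\}$ at worst, has $c_m=0$ for $m\le i-2$ when $k_{i+1}\ge i$, or handle the general case directly via Lemma~\ref{lem:root-calculation}. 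The cleanest route: apply Lemma~\ref{lem:root-calculation} with the string $s_i s_{i-1}\cdots s_{k_i}$ acting on each $\al_m$, using linearity, and check that for a root $\beta$ arising in this way the negativity is preserved. The point is that $\pi_{k_i}=s_i\cdots s_{k_i}$ sends a negative root to a negative root unless $\beta$ "contains" $\al_{k_i-1}$ in a way that could flip sign — but by Lemma~\ref{lem:root-calculation} the only positive output of $s_j\cdots s_i$ on a simple root is on $\al_{i-1}$ (giving $\al_{i-1}+\cdots+\al_j$) and on $\al_{j+1}$ (giving $\al_j+\al_{j+1}$), both of which are increases, so a genuinely negative combination cannot become positive; the subtle case is $\al_i\mapsto -(\al_i+\cdots+\al_j)$, which only makes things more negative.

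More carefully, here is the mechanism I expect to use. Since $\pi_{k_{i+1}}\cdots\pi_{k_j}$ is built from $s_{k_{i+1}},\dots,s_j$ and these each only involve indices $\ge k_{i+1}\ge$ (the relevant lower bound), one shows by a parallel induction that $\beta$ lies in the $\bz_{\le 0}$-span of $\{\al_{k_{i+1}-1},\al_{k_{i+1}},\dots,\al_j\}$ — actually one must be slightly careful because $\pi_{k_{i+1}}$ can produce $\al_{k_{i+1}-1}$. Granting that $\beta$ has support in indices $\ge k_{i+1}-1$, I then split into cases according to whether $k_{i+1}-1$ is $\ge k_i$ or $<k_i$. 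In the first case the support of $\beta$ is inside $\{k_i,\dots,j\}$, and I claim $s_i\cdots s_{k_i}$ maps the negative of any nonzero nonnegative combination of $\al_{k_i},\dots,\al_j$ to a negative root: this follows because $s_i\cdots s_{k_i}$ permutes the "interval roots'' and the unique root it sends to a positive root among relevant pieces is $\al_{k_i}$ itself, sent to $-(\al_{k_i}+\cdots+\al_i)$, still negative, with all other basis vectors $\al_m$ ($k_i<m\le j$) sent either to $\al_{m-1}$ (still a simple root with index $\ge k_i$, but now we need $m-1\ge k_i$, true) or, for $m=i+1$, to $\al_i+\al_{i+1}$ — wait, these are positive simple or near-simple roots, so $s_i\cdots s_{k_i}(\beta)$ becomes a nonpositive combination of positive roots, i.e.\ negative, as long as not all coefficients vanish. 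The main obstacle, and the step I'd spend the most care on, is precisely pinning down the support of $\beta$ and checking the boundary cases where $k_{i+1}-1 = k_i-1$ or $k_i - 2$, i.e.\ verifying that the one potentially sign-flipping branch $\al_{i-1}\mapsto \al_{i-1}+\cdots+\al_j$ of Lemma~\ref{lem:root-calculation} cannot be triggered by the actual $\beta$'s that occur — equivalently, that $c_{k_i-1}=0$ whenever it would matter. I expect this to follow from the observation that $\pi_{k_{i+1}}\cdots\pi_{k_j}$ never introduces $\al_{k_i-1}$ since $k_i-1<k_{i+1}-1$ forces that index below the range of all those cycles, so the only simple root of index $\le k_i-1$ possibly in $\beta$'s support has coefficient zero — closing the induction.
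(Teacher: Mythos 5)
Your proposal has a genuine gap at its central step, and the sign analysis there is backwards. Write $\beta=-\sum_m c_m\al_m$ with $c_m\ge 0$ and apply $\pi_{k_i}=s_i\cdots s_{k_i}$: by Lemma \ref{lem:root-calculation} the branch that threatens negativity is \emph{not} $\al_{k_i-1}\mapsto \al_{k_i-1}+\cdots+\al_i$ (that image is a positive root, so the term $-c_{k_i-1}\,\pi_{k_i}(\al_{k_i-1})$ contributes negatively, which is harmless), but rather $\al_{k_i}\mapsto -(\al_{k_i}+\cdots+\al_i)$: combined with the overall minus sign of $\beta$ this produces a \emph{positive} contribution $+c_{k_i}(\al_{k_i}+\cdots+\al_i)$, which can flip the sign of the whole root. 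Your claim that this case ``only makes things more negative'' is false; e.g.\ $s_1(-\al_1)=\al_1>0$. So the mechanism you describe does not close the induction. What actually saves the lemma is never isolated in your argument: since $\pi_{k_j}(\al_l)$ is negative, Lemma \ref{lem:root-calculation} forces it to be $-(\al_{k_j}+\cdots+\al_j)$, whose coefficient of $\al_j$ is $-1$; and $\pi_{k_i}\cdots\pi_{k_{j-1}}$ lies in $\la s_1,\dots,s_{j-1}\ra$, whose elements only alter a root by multiples of $\al_1,\dots,\al_{j-1}$ and hence preserve the coefficient of $\al_j$. A root with a strictly negative coefficient is negative, and the paper's one-line proof is exactly this. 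Your dangerous term $+c_{k_i}(\al_{k_i}+\cdots+\al_i)$ is then harmless only because $i<j$, so it cannot touch the $\al_j$-coefficient --- but you need to say this.

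A secondary problem: your support analysis assumes $k_i<k_{i+1}<\cdots<k_j$ (``$k_i-1<k_{i+1}-1$ forces that index below the range of all those cycles''). The lemma is stated, and is used in the proof of Lemma \ref{lem:root-less-than-cycle}, for an \emph{arbitrary} product of cycles subject only to $1\le k_t\le t+1$; no monotonicity of the $k_t$ is available, so the claimed bound on the support of $\beta$ does not follow. The coefficient-of-$\al_j$ argument needs no such hypothesis.
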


        \begin{proof}
        If $\pi_{k_j}(\al_l)$ is negative, then the coefficient of $\al_j$ in $\pi_{k_j}(\al_l)$ is $-1$. Notice that the element $\pi_{k_i}\cdots \pi_{k_{j-1}}\in \la s_1,\cdots, s_{j-1}\ra$.  Thus the coefficient of $\al_j$ is $\pi_{k_i}\cdots \pi_{k_j} (\al_l)$ is also $-1$. This implies that $\pi_{k_i}\cdots \pi_{k_j}(\al_l)<0$.
        \end{proof}

\begin{Lem}\label{lem:positive-root}
Suppose $i\leq j$. If $\pi_{k_j}(\al_l)$ is  a positive root but not a simple root, then $\pi_{k_i}\cdots\pi_{k_j}(\al_l)>0$.
\end{Lem}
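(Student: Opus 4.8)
The plan is to mimic the proof of Lemma \ref{lem:negative-root}, tracking the coefficient of $\al_j$ in the relevant root. Since $\pi_{k_i}\cdots\pi_{k_{j-1}}\in\la s_1,\dots,s_{j-1}\ra$, this element fixes the $\al_j$-coefficient of any root it is applied to, so the $\al_j$-coefficient of $\pi_{k_i}\cdots\pi_{k_j}(\al_l)$ equals the $\al_j$-coefficient of $\pi_{k_j}(\al_l)$. Thus the first step is to read off, from Lemma \ref{lem:root-calculation}, what the $\al_j$-coefficient of $\pi_{k_j}(\al_l)=s_j s_{j-1}\cdots s_{k_j}(\al_l)$ is in the case that $\pi_{k_j}(\al_l)$ is positive but not simple.

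The second step is the case analysis. If $\pi_{k_j}$ is the identity (i.e. $k_j=j+1$), then $\pi_{k_j}(\al_l)=\al_l$ is simple, so this case is excluded by hypothesis. Otherwise $\pi_{k_j}=s_j\cdots s_{k_j}$ with $k_j\le j$, and Lemma \ref{lem:root-calculation} (with $i$ there equal to $k_j$) tells us $\pi_{k_j}(\al_l)$ is: $\al_l$ if $l\le k_j-2$ or $l\ge j+2$ (simple, excluded); $\al_{k_j-1}+\cdots+\al_j$ if $l=k_j-1$; $-(\al_{k_j}+\cdots+\al_j)$ if $l=k_j$ (negative, excluded); $\al_{l-1}$ if $k_j+1\le l\le j$ (simple, excluded); and $\al_j+\al_{j+1}$ if $l=j+1$. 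So the only two cases in which $\pi_{k_j}(\al_l)$ is positive but not simple are $l=k_j-1$, giving $\al_j$-coefficient $1$, and $l=j+1$, giving $\al_j$-coefficient $1$. In both cases the $\al_j$-coefficient is $+1$, hence nonzero and in particular nonnegative.

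The final step is to conclude: a root whose $\al_j$-coefficient is $+1$ (more generally, strictly positive) cannot be a negative root, since all coefficients of a negative root are $\le 0$. Therefore $\pi_{k_i}\cdots\pi_{k_j}(\al_l)$, having $\al_j$-coefficient $1$, is positive. This completes the argument.

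I do not anticipate a genuine obstacle here: the lemma is an immediate companion to Lemmas \ref{lem:root-calculation} and \ref{lem:negative-root}, and the only thing requiring minor care is making sure the enumeration of cases from Lemma \ref{lem:root-calculation} is complete and that "positive but not simple" really does pin down $l\in\{k_j-1,\,j+1\}$ — in particular noting that when $l=k_j-1$ the root $\al_{k_j-1}+\cdots+\al_j$ is simple precisely when $k_j=j$, a degenerate sub-case that is automatically handled since then it is excluded by the "not simple" hypothesis and need not be treated separately.
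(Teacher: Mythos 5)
Your proof is correct and matches the paper's approach: the paper simply says the result ``follows by the same argument as in Lemma~\ref{lem:negative-root},'' which is exactly the $\al_j$-coefficient-tracking argument you spell out (the key point being that $\pi_{k_i}\cdots\pi_{k_{j-1}}\in\la s_1,\dots,s_{j-1}\ra$ preserves the $\al_j$-coefficient). One small slip in your closing remark: the root $\al_{k_j-1}+\cdots+\al_j$ would degenerate to the simple root $\al_j$ only when $k_j=j+1$, not $k_j=j$, and since you are in the case $k_j\le j$ this never happens, so the argument is unaffected.
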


\begin{proof}
This follows by the same argument as in Lemma \ref{lem:negative-root}.
\end{proof}

\begin{Lem}\label{lem:root-less-than-cycle}
If $i\leq j$, then
\[
\#\{\al\in \Delta: \pi_{k_i}\cdots \pi_{k_j}(\al)<0 \}\leq j-i+1.
\]
\end{Lem}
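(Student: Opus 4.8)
I would prove Lemma \ref{lem:root-less-than-cycle} by induction on $j-i$, peeling off the leftmost cycle $\pi_{k_i}$ and invoking Lemmas \ref{lem:negative-root}, \ref{lem:positive-root} to control which simple roots can be sent to negative roots by the remaining product $\pi_{k_{i+1}}\cdots\pi_{k_j}$. The base case $i=j$ reduces to showing that a single cycle $\pi_{k_i}=s_i s_{i-1}\cdots s_{k_i}$ sends at most one simple root to a negative root; this is immediate from Lemma \ref{lem:root-calculation}, since the only value of $k$ producing a negative root is $k=k_i$ (giving $-(\al_{k_i}+\cdots+\al_i)$).

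For the inductive step, set $w' = \pi_{k_{i+1}}\cdots\pi_{k_j}$ and $w = \pi_{k_i} w'$. By the inductive hypothesis, the set $S' = \{\al\in\Delta : w'(\al) < 0\}$ has at most $j-i$ elements. I want to show $S = \{\al\in\Delta : w(\al)<0\}$ has at most $j-i+1$ elements, i.e. $|S| \le |S'| + 1$. The key point is to analyze, for each simple root $\al$, the relation between the sign of $w'(\al)$ and the sign of $w(\al) = \pi_{k_i}(w'(\al))$. If $w'(\al) < 0$, then since $\pi_{k_i}\in\la s_1,\dots,s_i\ra$ and the coefficient of $\al_{i+1}$ (or more carefully, of some $\al_t$ with $t>i$ appearing with coefficient $-1$ in $w'(\al)$) is unchanged, $w(\al)$ is still negative when $w'(\al)$ has support extending past index $i$; the subtle case is when $w'(\al)$ is a negative root supported entirely within $\la s_1,\dots,s_i\ra$. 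If $w'(\al) > 0$ and not simple, Lemma \ref{lem:positive-root}-type reasoning shows $w(\al) > 0$. So the only simple roots $\al\notin S'$ that could land in $S$ are those with $w'(\al)$ a simple root, and I need to show at most one such $\al$ exists with $\pi_{k_i}(w'(\al)) < 0$. Since $w'$ permutes roots, the map $\al\mapsto w'(\al)$ is injective, so the simple roots $\al$ with $w'(\al)$ simple are sent bijectively onto a subset of $\Delta$; among those target simple roots $\be$, Lemma \ref{lem:root-calculation} shows $\pi_{k_i}(\be) < 0$ for at most one value, namely $\be = \al_{k_i}$. Hence at most one new simple root enters $S$, and also I should check no double-counting loss occurs on the $S'$ side — i.e. that every $\al\in S'$ with $w'(\al)$ supported past index $i$ stays in $S$; combined this yields $|S|\le |S'|+1\le j-i+1$.

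The main obstacle I anticipate is the careful bookkeeping in the case $w'(\al) < 0$ but $w'(\al)$ supported inside $\{\al_1,\dots,\al_i\}$: here $\pi_{k_i}$ acts nontrivially and could in principle turn $w'(\al)$ positive, which would mean an element of $S'$ drops out of $S$ — that is actually \emph{favorable} for the inequality $|S|\le|S'|+1$, so it does not break the argument, but it must be handled so the counting is airtight. I would organize the proof by partitioning $\Delta$ according to the sign and support of $w'(\al)$, show that the "$w'(\al)$ positive non-simple" and "$w'(\al)$ negative with support past $i$" cases contribute injectively into $S$ from $S'$, show the "$w'(\al)$ simple" case contributes at most one element, and discard the remaining cases as only decreasing the count. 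A cleaner alternative, which I would try first, is to use the length characterization: $\#\{\al\in\Delta : w(\al)<0\} \le \ell(w)$ is false in general, but one does have that this quantity equals the number of "descents", and since $\pi_{k_i}\cdots\pi_{k_j}$ has length $\sum_{t=i}^{j}(t+1-k_t)$ with each factor contributing a bounded amount, one might extract the bound $j-i+1$ directly from counting how many simple roots can be "descents"; if that shortcut works it avoids the support analysis entirely.
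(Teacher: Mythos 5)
Your induction peels off the \emph{leftmost} cycle $\pi_{k_i}$, writing $w=\pi_{k_i}w'$ with $w'=\pi_{k_{i+1}}\cdots\pi_{k_j}$, whereas the paper peels off the \emph{rightmost} cycle $\pi_{k_j}$, factoring $\pi_{k_i}\cdots\pi_{k_j}(\Delta)=(\pi_{k_i}\cdots\pi_{k_{j-1}})(\pi_{k_j}(\Delta))$. This is not a symmetric situation. Lemmas \ref{lem:negative-root} and \ref{lem:positive-root} are tailored to the right-peeling factorisation: they work because $\pi_{k_i}\cdots\pi_{k_{j-1}}\in\langle s_1,\dots,s_{j-1}\rangle$ leaves the $\al_j$-coefficient untouched, and that coefficient is $\pm1$ in $\pi_{k_j}(\al_l)$ whenever the latter is negative or positive-non-simple. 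When $\pi_{k_i}$ is applied \emph{after} $w'$ there is no such protected coefficient. Indeed $\Phi^-(\pi_{k_i})=\{\al_{k_i},\,\al_{k_i}+\al_{k_i+1},\,\dots,\,\al_{k_i}+\cdots+\al_i\}$ has $i-k_i+1$ members, of which $i-k_i$ are non-simple; so your claim that ``if $w'(\al)>0$ and not simple, Lemma \ref{lem:positive-root}-type reasoning shows $w(\al)>0$'' is not justified by Lemma \ref{lem:positive-root}, and if $w'$ could send several simple roots into $\Phi^-(\pi_{k_i})\setminus\Delta$ your bound $|S|\le|S'|+1$ would fail.

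The assertion does in fact hold in your setting, but it needs its own argument, which your proposal does not supply. One way to see it: write the root $\al_a+\cdots+\al_{b-1}$ as $e_a-e_b$ and view each $\pi_{k_l}^{-1}=s_{k_l}\cdots s_l$ (for $l>i$) as the cycle in $S_{r+1}$ sending $m\mapsto m+1$ for $k_l\le m\le l$, $\,l+1\mapsto k_l$, and fixing the rest; applying $\pi_{k_{i+1}}^{-1},\dots,\pi_{k_j}^{-1}$ in turn to the pair $(a,b)=(k_i,\,t+1)$ with $k_i<t\le i$ never decreases $b-a$ and never reaches the wrap index, so $w'^{-1}(\al_{k_i}+\cdots+\al_t)$ is never simple. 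This uses crucially that every cycle in $w'$ has index strictly greater than $i$; it is a fresh observation, not a corollary of Lemmas \ref{lem:root-calculation}--\ref{lem:positive-root}. Until something like it is written out, your induction does not close. (Your ``cleaner alternative'' also contains a slip: $\#\{\al\in\Delta:w\al<0\}\le\ell(w)$ is always \emph{true}, not false; it is simply far too weak. The permutation/descent picture you gesture at does give $|S|\le|S'|+1$, essentially by the argument above, but you do not carry it out.) The paper's right-peeling proof is both shorter and already fully supported by Lemmas \ref{lem:root-calculation}, \ref{lem:negative-root}, \ref{lem:positive-root}, so I would recommend it over the left-peeling variant.
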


\begin{proof}
We prove this by induction on the number of cycles. When $j-i+1=1$, the lemma follows from Lemma \ref{lem:root-calculation}.

Notice that
\[
\pi_{k_j}(\Delta)\subset \Delta \sqcup( \pi_{k_j}(\Delta)\cap \Phi^-) \sqcup(\pi_{k_j}(\Delta)\cap\Phi^+\bs \Delta).
\]
By Lemmas \ref{lem:root-calculation} and \ref{lem:negative-root},
$\pi_{k_i}\cdots\pi_{k_{j-1}}( \pi_{k_j}(\Delta)\cap \Phi^-)$ consists of at most one negative root, and  any element in $\pi_{k_i}\cdots\pi_{k_{j-1}}( \pi_{k_j}(\Delta)\cap \Phi^+\bs \Delta)$ is always positive.

Thus
\[
\begin{aligned}
&\pi_{k_i}\cdots \pi_{k_j}(\Delta)\cap \Phi^- \\
=&\pi_{k_i}\cdots\pi_{k_{j-1}}( \pi_{k_j}(\Delta))\cap \Phi^-\\
\subset &(\pi_{k_i}\cdots\pi_{k_{j-1}}( \Delta)\cap \Phi^- )\sqcup \pi_{k_i}\cdots\pi_{k_{j-1}}( \pi_{k_j}(\Delta)\cap \Phi^-).
\end{aligned}
\]
This finishes the induction step.
\end{proof}

\begin{Lem}\label{lem:descending-simple-roots}
 Suppose $i\leq j$. Consider $w=\pi_{k_i}\cdots\pi_{k_j}$ such that the sequence $k_i,\cdots, k_j$ is strictly increasing. For $k_j<l\leq j$, $w(\al_l)=\al_{l+i-j-1}$.
\end{Lem}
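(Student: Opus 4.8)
The statement to prove is: for $i \le j$ and $w = \pi_{k_i} \cdots \pi_{k_j}$ with $k_i < k_{i+1} < \cdots < k_j$ strictly increasing, and for $k_j < l \le j$, we have $w(\al_l) = \al_{l+i-j-1}$. Note that $k_j < l \le j$ forces $\pi_{k_j} = s_j s_{j-1} \cdots s_{k_j}$ to be a genuine (nonidentity) cycle and that $l-1 \ge k_j$, so $\al_l$ lies strictly inside the ``descending range'' of the last cycle. The natural approach is induction on the number of cycles $j - i + 1$.

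For the base case $j = i$: here $w = \pi_{k_i} = s_i s_{i-1} \cdots s_{k_i}$, and the hypothesis is $k_i < l \le i$, i.e. $k_i \le l-1 \le i-1$. By Lemma \ref{lem:root-calculation} (the case $i+1 \le k \le j$ in that lemma's notation, which here reads $k_i + 1 \le l \le i$), we get $s_i s_{i-1} \cdots s_{k_i}(\al_l) = \al_{l-1}$. This matches $\al_{l+i-j-1} = \al_{l-1}$ since $i = j$.

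For the inductive step, write $w = \pi_{k_i} (\pi_{k_{i+1}} \cdots \pi_{k_j})$ and set $w' = \pi_{k_{i+1}} \cdots \pi_{k_j}$. The sequence $k_{i+1} < \cdots < k_j$ is still strictly increasing, and the constraint $k_j < l \le j$ is unchanged, so the inductive hypothesis applies to $w'$ with the index range shifted by one: $w'(\al_l) = \al_{l + (i+1) - j - 1} = \al_{l + i - j}$. It remains to compute $\pi_{k_i}(\al_{l+i-j})$ where $\pi_{k_i} = s_i s_{i-1} \cdots s_{k_i}$. I claim the strict increase of the $k$'s guarantees $k_i < l + i - j \le i$, so that Lemma \ref{lem:root-calculation} again gives $\pi_{k_i}(\al_{l+i-j}) = \al_{l+i-j-1}$, which is exactly what we want. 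The upper bound $l + i - j \le i$ is immediate from $l \le j$. For the lower bound: since $k_i < k_{i+1} < \cdots < k_j$ are $j - i + 1$ distinct integers all at most $k_j$, we get $k_i \le k_j - (j - i)$; combined with $k_j < l$, i.e. $k_j \le l - 1$, this yields $k_i \le l - 1 - (j-i) = l + i - j - 1 < l + i - j$, as needed.

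The only real point requiring care — the ``main obstacle,'' though it is mild — is verifying that the index $l + i - j$ stays in the range $(k_i, i]$ where Lemma \ref{lem:root-calculation} returns a simple root shifted down by one, rather than landing at $k_i$ (which would produce a negative root) or at $k_i - 1$ (which would produce a non-simple positive root) or outside the active range entirely. This is precisely where the strictly-increasing hypothesis on the $k$'s is used, via the counting bound $k_i \le k_j - (j-i)$ above; without it the conclusion fails. I would present the estimate $k_i \le k_j - (j - i) \le l - 1 - (j-i)$ explicitly as the crux of the induction step.
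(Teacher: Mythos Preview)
Your proof is correct and follows essentially the same approach as the paper: repeatedly apply Lemma~\ref{lem:root-calculation} to shift the index down by one per cycle, using the strict increase of the $k_m$'s to guarantee the required inequality $k_{j-m} < l-m$ (equivalently your bound $k_i \le k_j - (j-i) < l - (j-i)$). The paper just writes this out as the chain $\pi_{k_i}\cdots\pi_{k_j}(\al_l)=\pi_{k_i}\cdots\pi_{k_{j-1}}(\al_{l-1})=\cdots=\al_{l+i-j-1}$ rather than as a formal induction, but the content is identical.
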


\begin{proof}
Notice that
\[
k_{j-1}< l-1, k_{j-2}<l-2, \cdots, k_i< l-j+i.
\]
Thus,
\[
w(\al_l)=\pi_{k_i}\cdots\pi_{k_j}(\al_l)=\pi_{k_i}\cdots\pi_{k_{j-1}}(\al_{l-1})=\cdots=\al_{l+i-j-1}.
\]
\end{proof}

\begin{Lem}\label{lem:simple-root-descending}
Consider $w=\Pi_2\cdots \Pi_a\in W(P)\bs W(G)$ and write $\Pi_a=\pi_{k_i}\cdots\pi_{k_j}$. Let $\al_m, \al_{m+1}, \cdots, \al_n$ be a sequence of $n-m+1$ simple roots. Then at least one of the following statements is true.
\begin{enumerate}[\normalfont(1)]
\item At least one of $w(\al_m), \cdots, w(\al_n)$ is positive.
\item  $\Pi_a(\al_m), \cdots, \Pi_a(\al_n)$ is still a sequence of consecutive simple roots.
\item   $w(\al_m)<0$ and $\Pi_a(\al_{m+1}), \cdots, \Pi_a(\al_n)$ is again a sequence of $n-m$ consecutive simple roots.
 \end{enumerate}
\end{Lem}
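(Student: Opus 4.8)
We show that if statement (1) fails then (2) or (3) holds. Factor $w = w'\,\Pi_a$ with $w' = \Pi_2\cdots\Pi_{a-1}$; since $\Pi_a$ is the last column we have $j = r$, and $w' = \pi_{k_{r_1}}\cdots\pi_{k_{i-1}}$ lies in $\langle s_1,\ldots,s_{i-1}\rangle$. I would first record two properties of $w'$: (i) for every element of the root lattice, $w'$ preserves its $\al_p$-coefficient whenever $p\geq i$; and (ii) $w'$ fixes $\al_p$ whenever $p\geq i+1$. Both hold because $\langle s_1,\ldots,s_{i-1}\rangle$ permutes $e_1,\ldots,e_i$ and fixes $e_{i+1},e_{i+2},\ldots$, which is the same mechanism used in the proof of Lemma \ref{lem:negative-root}.

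The key input is an explicit description of $\Pi_a$ as a permutation. We may assume $\Pi_a\neq e$, since otherwise $\Pi_a$ fixes every $\al_l$ and (2) is immediate. Using Lemma \ref{lem:root-calculation} and induction on the number of nontrivial cycles---here the strict inequalities $k_i<k_{i+1}<\cdots<k_j$ ensure that any trivial cycles $\pi_{k_t}=e$ occur only at the bottom of the column---one finds that $\Pi_a$ fixes every position below a certain window and every position above it, maps the window onto itself, and inside the window carries the positions that equal some $k_t$ order-preservingly onto the top of the window (which lies in $[i+1,r+1]$) and the remaining positions order-preservingly onto the bottom (which lies in $[k_i,i]$); this refines Lemma \ref{lem:descending-simple-roots}. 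From this picture one reads off how $\Pi_a$ acts on two consecutive positions $l,l+1$: if both lie in a single ``block''---below the window, the $k_t$-positions, the non-$k_t$-positions of the window, or above the window---then $\Pi_a(\al_l)$ is simple and a run of consecutive such positions is carried to a run of consecutive simple roots; if $l$ is a $k_t$-position and $l+1$ is not, then $\Pi_a(\al_l)$ is a negative root with $\al_i$-coefficient $-1$; in every other mixed case $\Pi_a(\al_l)$ is a positive non-simple root with $\al_i$-coefficient $+1$.

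Now suppose (1) fails, so $w(\al_l)<0$ for all $m\leq l\leq n$. By (i), no $\Pi_a(\al_l)$ with $m\leq l\leq n$ can be positive non-simple, for then $w(\al_l)$ would inherit the $+1$ on $\al_i$ and be positive. Hence along the positions $m,m+1,\ldots,n+1$ there is no passage into or out of the window, and no passage from a non-$k_t$-position to a $k_t$-position. So the string $m,\ldots,n+1$ either stays inside a single block---and then $\Pi_a$ sends $\al_m,\ldots,\al_n$ to consecutive simple roots, giving (2)---or it lies inside the window and passes exactly once from the $k_t$-block to the non-$k_t$-block, at a single $l_0$ with $m\leq l_0\leq n$; in that case $\Pi_a(\al_m),\ldots,\Pi_a(\al_{l_0-1})$ and $\Pi_a(\al_{l_0+1}),\ldots,\Pi_a(\al_n)$ are each runs of consecutive simple roots while $\Pi_a(\al_{l_0})<0$. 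If $l_0>m$, then $\al_m$ is a $k_t$-position, so $\Pi_a(\al_m)=\al_p$ with $p\geq i+1$, and by (ii) $w(\al_m)=\al_p>0$, contradicting the failure of (1). Therefore $l_0=m$, and then $\Pi_a(\al_{m+1}),\ldots,\Pi_a(\al_n)$ is a run of $n-m$ consecutive simple roots while $\Pi_a(\al_m)$ is negative with $\al_i$-coefficient $-1$, so $w(\al_m)<0$ by (i); this is (3).

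The step I expect to be the main obstacle is establishing the explicit shuffle form of $\Pi_a$, and in particular the boundary analysis within it---the images of $\al_{k_i-1}$ and of the roots at the far end of the window when trivial cycles are present, and keeping track of which $\al_i$-coefficient carries which sign. Once that is in hand, the remainder is bookkeeping with (i) and (ii).
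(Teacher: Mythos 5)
Your proposal is correct, but it follows a genuinely different route from the paper. The paper proves the lemma by induction on the number of cycles in $\Pi_a$: it peels off the outermost cycle $\pi_{k_j}$, runs a five-way case analysis on the single cycle using Lemma \ref{lem:root-calculation}, and then reduces to the shorter product via Lemmas \ref{lem:negative-root}, \ref{lem:positive-root}, and \ref{lem:descending-simple-roots}. You instead establish a closed-form description of $\Pi_a$ as a ``shuffle'' permutation (the $k_t$-positions go order-preservingly to the top $[i+1,\ldots]$ of the window $[k_i, j'+1]$, the remaining window positions go order-preservingly to the bottom $[k_i,i]$, and everything outside the window is fixed), then reduce the lemma to bookkeeping about block transitions, tracking the $\al_i$-coefficient through $w'=\Pi_2\cdots\Pi_{a-1}\in\langle s_1,\ldots,s_{i-1}\rangle$ via your observations (i) and (ii). I checked your shuffle description, including the boundary cases at $\al_{k_i-1}$ and $\al_{j'+1}$ and the handling of trivial cycles at the bottom of the column, and it is correct; so is the rest of the argument (in particular, the non-$K$-to-$K$ transitions being positive with $\al_i$-coefficient $+1$ is what forces at most one $K$-to-non-$K$ passage, and property (ii) is exactly what forces $l_0=m$). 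The paper's coefficient-tracking is implicit (e.g.\ in passing from $\Pi_a(\al_l)>0$ to $w(\al_l)>0$), whereas you make it explicit as (i)--(ii). Your approach costs more upfront---the shuffle structure is a stronger claim than Lemma \ref{lem:descending-simple-roots} and you flag it yourself as the main obstacle---but it buys a more transparent picture of exactly which consecutive pairs $(l,l+1)$ produce negative, simple, or non-simple positive images, and hence why the three alternatives in the statement are the only possibilities.
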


\begin{proof}
We prove this  by induction on the number of cycles in $\Pi_a$. If $j-i+1=1$, then one of the following holds:

 \begin{enumerate}[(a)]
  \item If $m+1\leq k_j\leq n+1$, then $\pi_{k_j}(\al_{k_j-1})>0$ but it is not a simple root. This implies $w(\al_{k_j-1})>0$.
  \item If $m-1\leq j\leq n-1$, then $\pi_{k_j}(\al_{j+1})>0$ but it is not a simple root. And similarly $w(\al_{j+1})>0$.
 \item If  $k_j\geq n+2$, or $m\geq j+2$, then $(\Pi_a(\al_m),\cdots,\Pi_a(\al_{n}))=(\al_m,\cdots,\al_n)$.
  \item  If $k_j=m$ and $j\geq n$, then $\Pi_a(\al_m)<0,$ and $(\Pi_a(\al_{m+1}),\cdots,\Pi_a(\al_{n}))=( \al_{m},\cdots, \al_{n-1})$.
 \item If $k_j+1\leq m\leq n\leq j$, then $(\Pi_a(\al_m),\cdots,\Pi_a(\al_{n}))=(\al_{m-1},\cdots, \al_{n-1})$.
 \end{enumerate}

Now we analyze the induction process. Write $\Pi_a=\pi_{k_i} \cdots \pi_{k_{j-1}}\cdot \pi_{k_j}$ and compute the action of $\pi_{k_j}$ first. There are $3$ cases.
\begin{enumerate}[(i)]
\item In cases (c) and (e), we are done by induction.
    \item In cases (a) and (b), we are done by Lemma \ref{lem:positive-root}.
\item   If we are in case (d), then by Lemma \ref{lem:negative-root}, we have $w(\al_m)<0$.  By Lemma \ref{lem:descending-simple-roots}, $\Pi_a(\al_{m+l})=\al_{m+l+i-j-1}.$
    Therefore, $\Pi_a(\al_{m+1}), \cdots, \Pi_a(\al_n)$ is still a sequence of $n-m$ consecutive simple roots.
\end{enumerate}
\end{proof}


 This immediately implies the following.

\begin{Lem}\label{lem:sequence-simple}
Let $w=\Pi_2\cdots \Pi_a\in W(P)\bs W(G)$. Let $\al_m, \al_{m+1}, \cdots, \al_n$ be a sequence of $n-m+1$ simple roots. If $n-m+1>a-1$, then there is some $m\leq l\leq n$ such that $w(\al_l)>0$.
\end{Lem}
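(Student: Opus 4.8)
The plan is to deduce Lemma~\ref{lem:sequence-simple} from Lemma~\ref{lem:descending-simple-roots} by a straightforward induction on the number of columns in the coset representative $w = \Pi_2 \cdots \Pi_a$. There are $a-1$ columns (columns $2$ through $a$), and each column $\Pi_j$ will be allowed to "consume" at most one simple root from the consecutive sequence $\al_m, \al_{m+1}, \ldots, \al_n$; since the sequence has length $n-m+1 > a-1$, some root must survive to be sent to a positive root.

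First I would set up the induction on the index of the outermost column. Apply Lemma~\ref{lem:descending-simple-roots} to $\Pi_a$ acting on the sequence $\al_m, \ldots, \al_n$. In the first two cases of that lemma we are immediately done: case~(1) directly gives that one of $w(\al_m), \ldots, w(\al_n)$ is positive. In case~(2), $\Pi_a$ sends the sequence to another sequence of $n-m+1$ consecutive simple roots, and now $\Pi_2 \cdots \Pi_{a-1}$ is a product of $a-2$ cycles-blocks (columns), so since $n-m+1 > a-1 > a-2$ the inductive hypothesis applies and yields a surviving positive root. In case~(3), $w(\al_m) < 0$ but $\Pi_a(\al_{m+1}), \ldots, \Pi_a(\al_n)$ is a sequence of $n-m$ consecutive simple roots; here $n-m > a-2$ (since $n-m+1 > a-1$), so again the inductive hypothesis applied to $\Pi_2 \cdots \Pi_{a-1}$ on this shorter sequence of $n-m$ roots produces some $\al_l$ in the shortened range with $\Pi_2\cdots\Pi_{a-1}(\Pi_a(\al_l)) = w(\al_l) > 0$. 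The base case, when there is a single column $\Pi_2$, follows directly: one column is a single descending string, and Lemma~\ref{lem:descending-simple-roots} (or really just cases (a)–(e) in its proof, which handle a product of cycles with strictly increasing indices — but a single column already has strictly increasing indices by condition~(b)) shows a sequence of length $\geq 2$ cannot all be sent to non-positive roots.

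A subtlety worth addressing carefully: Lemma~\ref{lem:descending-simple-roots} as stated concerns only the action of the last column $\Pi_a$, and its conclusion in cases (2) and (3) is about the image sequence being again consecutive simple roots — which is exactly the hypothesis needed to re-enter the induction with $\Pi_2 \cdots \Pi_{a-1}$. So the induction is clean, provided one checks the bookkeeping of lengths: at each step the number of remaining columns drops by one and the length of the surviving consecutive sequence drops by at most one, so the strict inequality (length) $>$ (number of columns) is preserved throughout.

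The main obstacle is really just making sure the inductive statement is phrased so that Lemma~\ref{lem:descending-simple-roots} feeds into it correctly — in particular that "a sequence of consecutive simple roots $\al_{m'}, \ldots, \al_{n'}$" is preserved under the reindexing performed by $\Pi_a$ in cases (2) and (3), and that a positive root produced later (by $\Pi_2 \cdots \Pi_{a-1}$ acting on $\Pi_a(\al_l)$) is not subsequently destroyed. But this last point is automatic: once we locate $l$ with $w(\al_l) = (\Pi_2\cdots\Pi_{a-1})(\Pi_a(\al_l)) > 0$, there is nothing further to do, since $w$ is the full product and $w(\al_l)$ is precisely the quantity in the conclusion. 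Hence the lemma follows, and since this is flagged in the text as "immediately implying" the previous lemma, the write-up can be kept to a few lines.
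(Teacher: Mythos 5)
Your proof is correct and is exactly the intended derivation: the paper states this lemma as an immediate consequence of the preceding three-case lemma, and your induction on the number of columns --- peeling off $\Pi_a$, finishing at once in case (1), and re-entering the induction with $\Pi_2\cdots\Pi_{a-1}$ on a consecutive sequence whose length drops by at most one while the column count drops by one in cases (2) and (3) --- is precisely the argument being left implicit. The only correction needed is a reference: the three-case lemma you are invoking is Lemma \ref{lem:simple-root-descending}, not Lemma \ref{lem:descending-simple-roots} (the latter is the single-column reindexing statement used inside its proof).
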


\subsection{Proof of Theorem \ref{thm:root-theoretic} part (1)}

Recall we have a partition $\lam=(p_1\cdots p_m)$.  We fill in the Young diagram for $\lam$ with $\al_1,\cdots,\al_r$  from right to left in rows, from the top row  to the bottom row. Deleting the first column gives us $\Delta_\lambda$. For example, if $\lambda=(442)$ and $\lam^\top=(3322)$, then we have
\[
\def\mca#1{\multicolumn{1}{c}{#1}}
\def\mcb#1{\multicolumn{1}{c|}{#1}}
\renewcommand{\arraystretch}{1.5}
\begin{tabular}{c|c|c|c|c|}
  \mca{}      & \mca{$q_1$} & \mca{$q_2$} & \mca{$q_3$} & \mca{$q_4$} \\\cline{2-5}
  \mcb{$p_1$}   &   &   $\al_3$   & $\al_2$  & $\al_1$ \\\cline{2-5}
  \mcb{$p_2$}   & &  $\al_7$  &   $\al_6$  &  $\al_5$\\\cline{2-5}
  \mcb{$p_3$} &     &  $\al_9$   & \multicolumn{1}{c}{}& \multicolumn{1}{c}{} \\\cline{2-3}
\end{tabular}.
\]
If $\lambda=(413)$ and $\lam^\top=(3221)$, then we have
\[
\def\mca#1{\multicolumn{1}{c}{#1}}
\def\mcb#1{\multicolumn{1}{c|}{#1}}
\renewcommand{\arraystretch}{1.5}
\begin{tabular}{c|c|c|c|c|}
  \mca{}      & \mca{$q_1$} & \mca{$q_2$} & \mca{$q_3$} & \mca{$q_4$}\\\cline{2-5}
  \mcb{$p_1$}   &   & $\al_3$    &  $\al_2$  & $\al_1$ \\\cline{2-5}
  \mcb{$p_2$}   &     & \multicolumn{1}{c}{}  & \multicolumn{1}{c}{}  & \multicolumn{1}{c}{}\\ \cline{2-4}
  \mcb{$p_3$} &     &  $\al_7$  & $\al_6$ & \multicolumn{1}{c}{}\\ \cline{2-4}
\end{tabular}.
\]

For $1\leq i\leq m$, let $\Delta_{\lambda, i}$ denote the set of simple roots contained in the $i$th row. It is a sequence of simple roots with consecutive indices
and
\[
\Delta_{\lambda}=\bigsqcup_{1\leq i\leq m}\Delta_{\lambda,i}.
\]
For $2\leq i\leq n$, we also define $Q_i$ to be the set of simple roots contained in the $i$th column. Thus,
\[
\Delta_{\lam}=\bigsqcup_{2\leq i\leq n}Q_i.
\]

From now on suppose there exists $w\in W(P_{\mu^\top})\bs W(G)$ such that $w(\al)<0$ for all $\al\in\Delta_\lambda$.  We shall derive a contradiction. Write $w=\Pi_2\cdots \Pi_a$. Then for any $2\leq l\leq a$ and $\al\in\Delta_\lam$, $\Pi_l\cdots \Pi_a(\al)$ is either negative or a simple root (Lemma \ref{lem:positive-root}).  For each simple root $\al\in\Delta_\lam$, there is a smallest index $l$ such that $\Pi_{l+1}\cdots \Pi_a(\al)$ is a simple root but $\Pi_{l}\cdots \Pi_a(\al)<0$. We write $R_{l}$ for the set of such simple roots (notice that this depends on $w$). Thus
\[
\Delta_\lam=\bigsqcup_{2\leq l\leq a}R_l.
\]

From Lemma \ref{lem:simple-root-descending} we deduce the following result.
 \begin{Lem}\label{lem:key-obser}
With the above assumption,  if $\al_i\in R_l$ and $\al_{i+1}\in \Delta_\lam$, then $\al_{i+1}\in R_{2}\cup\cdots \cup R_{l-1}$; if $\al_i\in R_2$, then $\al_{i+1}\notin \Delta_\lam$.
\end{Lem}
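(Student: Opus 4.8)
The plan is to track how a pair of consecutive simple roots $\al_i,\al_{i+1}$ in $\Delta_\lam$ is transformed as the cycles $\Pi_a,\Pi_{a-1},\dots$ act from the right, and to use Lemma \ref{lem:simple-root-descending} to control the ``becoming negative'' events. Recall the setup: since $w=\Pi_2\cdots\Pi_a$ sends every root of $\Delta_\lam$ to a negative root, for each $\al\in\Delta_\lam$ the sequence $\Pi_a(\al),\Pi_{a-1}\Pi_a(\al),\dots$ stays in $\Phi^-\cup\Delta$ by Lemma \ref{lem:positive-root}, and $R_l$ records the index $l$ at which a given root of $\Delta_\lam$ first turns negative (i.e.\ $\Pi_{l+1}\cdots\Pi_a(\al)$ is simple but $\Pi_l\cdots\Pi_a(\al)<0$).

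First I would fix $\al_i\in R_l$, so the roots $\beta^{(m)}:=\Pi_{m+1}\cdots\Pi_a(\al_i)$ for $m\geq l$ are simple, say $\beta^{(m)}=\al_{c_m}$, with $\beta^{(l-1)}<0$; and assuming $\al_{i+1}\in\Delta_\lam$, set $\gamma^{(m)}:=\Pi_{m+1}\cdots\Pi_a(\al_{i+1})$, which are simple for all $m\geq a_{i+1}:=$ (the index in $R$ of $\al_{i+1}$). The key point to extract from Lemma \ref{lem:simple-root-descending} (applied with the two-term sequence $\al_i,\al_{i+1}$, or rather with the pair $\beta^{(m)},\gamma^{(m)}$ of consecutive simple roots whenever both are still simple) is that, stepping down one cycle at a time, one cannot have $\beta$ turn negative at the same cycle as $\gamma$ — concretely, as long as $\beta^{(m)},\gamma^{(m)}$ are consecutive simple roots $\al_c,\al_{c+1}$, applying $\Pi_m$ either keeps them a consecutive pair of simple roots, or makes exactly one of them positive-and-non-simple (impossible here, contradicting the $R$-membership of $\al_i,\al_{i+1}$), or sends $\al_c\mapsto$ negative while $\al_{c+1}\mapsto\al_c$ stays simple (case (d) of the proof of Lemma \ref{lem:simple-root-descending}, which needs the \emph{smaller}-indexed root to be the one that goes negative). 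Thus at the cycle where $\beta$ first goes negative — which is cycle $l$ — we must be in case (d), so $\gamma^{(l-1)}=\Pi_l\cdots\Pi_a(\al_{i+1})$ is \emph{still} a simple root; hence $\al_{i+1}\notin R_l\cup R_{l+1}\cup\cdots\cup R_a$, i.e.\ $\al_{i+1}\in R_2\cup\cdots\cup R_{l-1}$, which is the first assertion.

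For the second assertion, suppose $\al_i\in R_2$, so $\beta^{(1)}=\Pi_2\cdots\Pi_a(\al_i)<0$ and $\beta^{(m)}$ is simple for all $m\geq 2$, with $\beta^{(2)}=\Pi_3\cdots\Pi_a(\al_i)$ simple. If $\al_{i+1}$ were also in $\Delta_\lam$, the argument of the previous paragraph applies verbatim: tracking the consecutive pair down to cycle $2$, the pair stays a consecutive pair of simple roots until the cycle at which $\beta$ goes negative, which can only be cycle $2$ (since $\al_i\in R_2$), and at that cycle we are forced into case (d), giving $\gamma^{(1)}=\Pi_2\cdots\Pi_a(\al_{i+1})$ simple — but this contradicts $\al_{i+1}\in\Delta_\lam$, for which $w(\al_{i+1})=\gamma^{(1)}$ must be negative by hypothesis. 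Hence $\al_{i+1}\notin\Delta_\lam$.

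The main obstacle I anticipate is bookkeeping the relative positions of the $\beta$- and $\gamma$-traces as the cycles act: Lemma \ref{lem:simple-root-descending} is phrased for a block of $n-m+1$ consecutive simple roots, and here I want to apply it repeatedly to the shrinking pair $(\beta^{(m)},\gamma^{(m)})$ of length $2$, being careful that (i) the pair really does remain \emph{consecutive} (not just both simple) until the decisive cycle, and (ii) that the root going negative first is the one matching $\al_i$ rather than $\al_{i+1}$, so that case (d) and not some mirror-image scenario applies — this orientation is exactly what forces $\al_{i+1}$ to stay simple through cycle $l$. Once the indexing in Lemma \ref{lem:simple-root-descending} is lined up correctly with the labels $c_m$, the rest is immediate.
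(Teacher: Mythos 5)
Your proposal is correct and takes essentially the same approach the paper intends: the paper states Lemma \ref{lem:key-obser} as an immediate consequence of Lemma \ref{lem:simple-root-descending} without writing out the deduction, and you supply exactly that deduction by tracking the consecutive pair $(\al_i,\al_{i+1})$ down the big cycles $\Pi_a,\Pi_{a-1},\dots$, ruling out case (1), recursing through case (2), and observing that only case (3) (rooted in sub-case (d)) can make $\beta$ negative, and when it does, $\gamma$ remains a simple root. One small terminological note: what you call ``case (d)'' is the base-case sub-case in the proof of Lemma \ref{lem:simple-root-descending}; the conclusion you actually invoke at the big-cycle level is case (3) of that lemma's statement (with the extra observation, visible from the proof and from Lemma \ref{lem:negative-root}, that case (3) forces $\Pi_m(\beta^{(m)})<0$, i.e.\ $\al_i\in R_m$), but the substance of your argument is right.
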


Before we prove the most general result, we give three examples to illustrate the ideas of our proof.

\begin{Ex}
Let $\mu=(3^2)$, $\mu^\top=(2^3)$ and $\lam=(411)$. The corresponding Young tableaux are
\[
\mu: \qquad
\def\mca#1{\multicolumn{1}{c}{#1}}
\def\mcb#1{\multicolumn{1}{c|}{#1}}
\renewcommand{\arraystretch}{1.5}
\begin{tabular}{c|c|c|c|}
  \mca{}      & \mca{$r_1$} & \mca{$r_2$} & \mca{$r_3$}   \\\cline{2-4}
  \mcb{$t_1$}   &   &   $k_2$  &  $k_4$  \\\cline{2-4}
  \mcb{$t_2$}   &   & $k_3$  &   $k_5$  \\\cline{2-4}
\end{tabular}
\qquad\qquad
\lam:\qquad
\begin{tabular}{c|c|c|c|c|}
  \mca{}      & \mca{$q_1$} & \mca{$q_2$} & \mca{$q_3$} & \mca{$q_4$}\\\cline{2-5}
  \mcb{$p_1$}   &   &  $\al_3$    & $\al_2$  & $\al_1$ \\\cline{2-5}
  \mcb{$p_2$}   &     &   \multicolumn{1}{c}{}&  \multicolumn{1}{c}{}& \multicolumn{1}{c}{}\\\cline{2-2}
  \mcb{$p_3$} &     &   \multicolumn{1}{c}{} & \multicolumn{1}{c}{} & \multicolumn{1}{c}{}\\\cline{2-2}
\end{tabular}.
\]

In this case, $G=\GL_6$ and $P_{\mu^\top}\cong \GL_2\times \GL_2\times \GL_2$. Then $W(P_{\mu^\top})\bs W(G)$ consists of elements of the form
\[
(\pi_{k_2}\pi_{k_3}) (\pi_{k_4}\pi_{k_5}),
\]
where the cycles $(\pi_{k_2}, \pi_{k_3})$ and $(\pi_{k_4},\pi_{k_5})$ are both descending.  The result is clear from Lemma \ref{lem:sequence-simple}.

\end{Ex}

\begin{Ex} \label{ex:example2}
Let $\mu=(4221)$, $\mu^\top=(4311)$ and $\lambda=(3^3)$. Notice that
\[
p_1+p_2+p_3=3+3+3> 4+2+2=t_1+t_2+t_3.
\]
The associated Young tableaux are
\[
\mu: \qquad
\def\mca#1{\multicolumn{1}{c}{#1}}
\def\mcb#1{\multicolumn{1}{c|}{#1}}
\renewcommand{\arraystretch}{1.5}
\begin{tabular}{c|c|c|c|c|}
  \mca{}      & \mca{$r_1$} & \mca{$r_2$} & \mca{$r_3$} & \mca{$r_4$} \\\cline{2-5}
  \mcb{$t_1$}   &   &   $k_4$    & $k_7$ & $k_8$  \\\cline{2-5}
  \mcb{$t_2$} &     &  $k_5$ & \multicolumn{1}{c}{}& \multicolumn{1}{c}{} \\\cline{2-3}
  \mcb{$t_3$}   &     &  $k_6$  &  \multicolumn{1}{c}{}&  \multicolumn{1}{c}{} \\\cline{2-3}
  \mcb{$t_4$} &    &   \multicolumn{1}{c}{} & \multicolumn{1}{c}{}& \multicolumn{1}{c}{} \\\cline{2-2}
\end{tabular}
\qquad
\lam:\qquad
\begin{tabular}{c|c|c|c|}
  \mca{}      & \mca{$q_1$} & \mca{$q_2$} & \mca{$q_3$} \\\cline{2-4}
  \mcb{$p_1$}   &   &   $\al_2$   & $\al_1$  \\\cline{2-4}
  \mcb{$p_2$}   &   & $\al_5$ &  $\al_4$ \\\cline{2-4}
  \mcb{$p_3$} &    & $\al_8$   & $\al_7$ \\\cline{2-4}
\end{tabular}.
\]

 Notice $r_1=4$. Thus for any $w\in W(P_{\mu^\top})\bs W(G)$, there are at most $9-4=5$ cycles. However, $q_1=3$ and $\Delta_\lam$ has $9-3=6$ simple roots. By Lemma \ref{lem:root-less-than-cycle}, $w(\Delta_\lam)\cap \Phi^+\neq \emptyset$.
\end{Ex}

\begin{Ex}\label{ex:example3}
Let $\mu=(42222)$, $\mu^\top=(5511)$ and $\lambda=(333111)$. Notice that
\begin{equation}\label{eq:example3-1}
p_1+p_2+p_3=3+3+3> 4+2+2=t_1+t_2+t_2,
\end{equation}
but $r_1=5<6=q_1$. We cannot apply the argument in Example \ref{ex:example2}. The associated Young tableaux are
\[
\mu: \qquad
\def\mca#1{\multicolumn{1}{c}{#1}}
\def\mcb#1{\multicolumn{1}{c|}{#1}}
\renewcommand{\arraystretch}{1.5}
\begin{tabular}{c|c|c|c|c|}
  \mca{}      & \mca{$r_1$} & \mca{$r_2$} & \mca{$r_3$} & \mca{$r_4$}\\ \cline{2-5}
  \mcb{$t_1$}   &   &  $k_5$     & $k_{10}$ &  $k_{11}$  \\ \cline{2-5}
  \mcb{$t_2$} &     & $k_6$  & \multicolumn{1}{c}{}& \multicolumn{1}{c}{}  \\\cline{2-3}
  \mcb{$t_3$}   &    & $k_7$ &  \multicolumn{1}{c}{}&  \multicolumn{1}{c}{}\\\cline{2-3}
  \mcb{$t_4$} &     & $k_8$ & \multicolumn{1}{c}{}& \multicolumn{1}{c}{}  \\\cline{2-3}
    \mcb{$t_5$} &     & $k_9$ & \multicolumn{1}{c}{}& \multicolumn{1}{c}{}  \\\cline{2-3}
\end{tabular}
\qquad
\lam:\qquad
\begin{tabular}{c|c|c|c|}
  \mca{}      & \mca{$q_1$} & \mca{$q_2$} & \mca{$q_3$} \\\cline{2-4}
  \mcb{$p_1$}   &   &  $\al_2$    & $\al_1$ \\\cline{2-4}
  \mcb{$p_2$}   &    & $\al_5$ &  $\al_4$ \\\cline{2-4}
  \mcb{$p_3$} &    & $\al_8$   & $\al_7$ \\\cline{2-4}
  \mcb{$p_4$} &    & \multicolumn{1}{c}{}   & \multicolumn{1}{c}{} \\\cline{2-2}
    \mcb{$p_5$} &      & \multicolumn{1}{c}{}   & \multicolumn{1}{c}{} \\\cline{2-2}
      \mcb{$p_6$} &      & \multicolumn{1}{c}{}   & \multicolumn{1}{c}{} \\\cline{2-2}
\end{tabular}.
\]

Recall that
\[
R_2=\{\al\in\Delta_\lam: \Pi_3\Pi_4(\al)\in\Delta, \Pi_2\Pi_3\Pi_4(\al)<0\}.
\]
We claim that $R_2\subset Q_2=\{\al_2,\al_5,\al_8\}$. If not, then $R_2$ contains at least one of $\al_1,\al_4,\al_7$. Without loss of generality, we assume $\al_1\in R_2$. By Lemma \ref{lem:key-obser}, we deduce that $\al_2\notin \Delta_\lam$. This is a contradiction.

We now conclude that $R_2\subset Q_2$ and $R_3\sqcup R_4\supset Q_3$. By Lemma \ref{lem:root-less-than-cycle}, the number of cycles in $\Pi_3\Pi_4$ is greater than or equal to the size of $Q_3$. In other words,
\begin{equation}\label{eq:example3-2}
1+1=r_3+r_4\geq q_3=3.
\end{equation}
Contradiction.

We remark that
\[
r_3+r_4+3+3=t_1+t_2+t_3
\text{ and }
q_3+3+3=p_1+p_2+p_3.
\]
Thus Eq. (\ref{eq:example3-2}) actually contradicts Eq. (\ref{eq:example3-1}).
\end{Ex}

\begin{proof}[Proof of Theorem \ref{thm:root-theoretic} part (1)]

Now we turn to the general case.  There is an index $l$ such that $p_1+\cdots+p_l> t_1+\cdots +t_l$. Suppose there exists $w\in W(P_{\mu^\top})\bs W(G)$ such that $w(\Delta_\lambda)\subset \Phi^-$. We shall derive a contradiction. If $l=m$, then $r_1=b>m$. The number of cycles in $w\in W(P)\bs W(G)$ is at most $r+1-r_1$, but the number of simple roots in $\Delta_\lambda$ is $r-m$. This contradicts Lemma \ref{lem:root-less-than-cycle} (compare with Example \ref{ex:example2}).

If $l<m$, then
\begin{equation}\label{eq:contain}
R_{2}\sqcup\cdots \sqcup R_{t_{l+1}}\subset Q_2\sqcup\cdots \sqcup Q_{t_{l+1}}.
\end{equation}
Otherwise,  $R_{2}\sqcup\cdots \sqcup R_{t_{l+1}}$ contains a simple root $\al_k$ in $Q_{t_{l+1}+1}\sqcup\cdots \sqcup Q_{t_n}$.  Without loss of generality, we assume that $\al_k\in Q_{t_{l+1}+1}$. This means that
\[
\al_{k+1}\in Q_{t_{l+1}},\cdots, \al_{k+t_{l+1}-1}\in Q_2.
 \]
On the other hand, by Lemma \ref{lem:key-obser}, we deduce that
 \[
 \al_{k+1}\in R_{2}\sqcup\cdots \sqcup R_{t_{l+1}-1}, \cdots, \al_{k+t_{l+1}-2}\in R_2.
 \]
 This forces that $\al_{k+t_{l+1}-1}\notin \Delta_\lam$, which is impossible from our assumption. Thus Eq. (\ref{eq:contain}) holds and
\[
Q_{t_{l+1}+1}\sqcup\cdots \sqcup Q_{n}\subset R_{t_{l+1}+1}\sqcup\cdots \sqcup R_{a}.
\]
By Lemma \ref{lem:root-less-than-cycle},
\[
q_{t_{l+1}+1}+\cdots+q_n\leq r_{t_{l+1}+1}+\cdots +r_a.
\]

However, by Lemma \ref{lem:traspose-unipotent},
\[
\begin{aligned}
&l\cdot t_{l+1}+q_{t_{l+1}+1}+\cdots+q_n\\
&\geq p_1+\cdots+p_l\\
&> t_1+\cdots+t_l\\
&= l\cdot t_{l+1}+r_{t_{l+1}+1}+\cdots+r_a.
\end{aligned}
\]
This is a contradiction (compare with Example \ref{ex:example3}).
\end{proof}

\subsection{Proof of Theorem \ref{thm:root-theoretic} part (2)}
We need to show that there exists a unique $w_\mu=\Pi_2\cdots \Pi_a\in W(P_{\mu^\top})\bs W(G)$ such that $w_\mu(\Delta_\mu)$ is contained in  $\Phi^--\Phi_{\mu^\top}^-$; and $w(\Delta_\mu)\cap \Phi^+\neq \emptyset$ for all $w\neq w_\mu$. Before giving the proof we give an example to explain the idea.

\begin{Ex}
Let $\mu=\lam=(433)$, $\mu^\top=(3331)$.
The associated Young tableaux are
\[
\mu: \qquad
\def\mca#1{\multicolumn{1}{c}{#1}}
\def\mcb#1{\multicolumn{1}{c|}{#1}}
\renewcommand{\arraystretch}{1.5}
\begin{tabular}{c|c|c|c|c|}
  \mca{}      & \mca{$r_1$} & \mca{$r_2$} & \mca{$r_3$} & \mca{$r_4$} \\\cline{2-5}
  \mcb{$t_1$}   &   &   $k_3$    & $k_6$ & $k_9$  \\\cline{2-5}
  \mcb{$t_2$} &     &  $k_4$ & $k_7$ &  \multicolumn{1}{c}{} \\\cline{2-4}
  \mcb{$t_3$}   &     &  $k_5$  & $k_8$ &  \multicolumn{1}{c}{} \\\cline{2-4}
\end{tabular}
\qquad
\lam: \qquad
\def\mca#1{\multicolumn{1}{c}{#1}}
\def\mcb#1{\multicolumn{1}{c|}{#1}}
\renewcommand{\arraystretch}{1.5}
\begin{tabular}{c|c|c|c|c|}
  \mca{}      & \mca{$q_1$} & \mca{$q_2$} & \mca{$q_3$} & \mca{$q_4$} \\\cline{2-5}
  \mcb{$p_1$}   &   &   $\al_3$    & $\al_2$ & $\al_1$  \\\cline{2-5}
  \mcb{$p_2$} &     &  $\al_6$ & $\al_5$ &  \multicolumn{1}{c}{}\\\cline{2-4}
  \mcb{$p_3$}   &     &  $\al_9$  & $\al_8$ &  \multicolumn{1}{c}{} \\\cline{2-4}
\end{tabular}
\]
We have
\[
R_2=Q_2=\{\al_3,\al_6,\al_9\}, \qquad R_3=Q_3=\{\al_2,\al_5, \al_8\}, \qquad R_4=Q_4=\{\al_1\}.
\]
This allows us to find the desired $w_\mu$ inductively. Notice that a negative root is in $\Phi^--\Phi_{\mu^\top}^-$ if and only if the coefficients of $\al_3,\al_6$ or $\al_9$ is $-1$.

By Lemma \ref{lem:root-calculation}, the only choice for $\Pi_4$ corresponds to $k_9=1$. It is easy to check that
 $s_{987654321}(\al_1)=-(\al_1+\cdots+\al_9)$, and therefore $w(\al_1)\in \Phi^--\Phi_{\mu^\top}^-$
 since the coefficient of $\al_9$ is $-1$.

 We then need to find $\Pi_2\Pi_3$ such that $\Pi_2\Pi_3(w)<0$ for all $w\in \Pi_4(Q_2\cup Q_3)$. Notice that this is equivalent to the same problem, but with a smaller rank. To be more precise, we may naturally view $\Pi_2\Pi_3$ as an element in $W(P_{\mu'^\top})\bs W(\GL_9)$, where $\mu'=(333)^\top=(333)$. (In terms of the Young diagram, we only need to delete the last column.) On the other hand, $\Pi_4(Q_2\cup Q_3)=\{\al_1,\al_2,\al_4,\al_5,\al_7,\al_8\}$, which are exactly the simple roots we need to consider in this smaller rank case.

\[
\mu': \qquad
\def\mca#1{\multicolumn{1}{c}{#1}}
\def\mcb#1{\multicolumn{1}{c|}{#1}}
\renewcommand{\arraystretch}{1.5}
\begin{tabular}{c|c|c|c|}
  \mca{}      & \mca{$r_1$} & \mca{$r_2$} & \mca{$r_3$} \\\cline{2-4}
  \mcb{$t_1$}   &   &   $k_3$    & $k_6$   \\\cline{2-4}
  \mcb{$t_2$} &     &  $k_4$ & $k_7$ \\\cline{2-4}
  \mcb{$t_3$}   &     &  $k_5$  & $k_8$ \\\cline{2-4}
\end{tabular}
\qquad
\lam': \qquad
\def\mca#1{\multicolumn{1}{c}{#1}}
\def\mcb#1{\multicolumn{1}{c|}{#1}}
\renewcommand{\arraystretch}{1.5}
\begin{tabular}{c|c|c|c|}
  \mca{}      & \mca{$q_1$} & \mca{$q_2$} & \mca{$q_3$}\\\cline{2-4}
  \mcb{$p_1$}   &   &   $\al_2$    & $\al_1$  \\\cline{2-4}
  \mcb{$p_2$} &     &  $\al_5$ & $\al_4$ \\\cline{2-4}
  \mcb{$p_3$}   &     &  $\al_8$  & $\al_7$  \\\cline{2-4}
\end{tabular}
\]

Now we claim that there is only one choice for $\Pi_3$, corresponding to $(k_6, k_7, k_8)=(1,4,7)$. Recall that we need to find $\Pi_3$ which sends $\al_1,\al_4$, and $\al_7$ to negative roots. Indeed, if $k_8<7$, then $\pi_{k_8}(\al_7)=\al_6$ and by Lemma \ref{lem:descending-simple-roots}, $\Pi_3(\al_7)=\al_4$, which is positive; if $k_8=8$, then $w(\al_7)>0$; if $k_8>8$, then $\pi_{k_8}(\al_1,\al_4,\al_7)=\al_1,\al_4,\al_7$, and $\pi_{k_6}\pi_{k_7}$ cannot send $\al_1,\al_4,\al_7$ to negative roots.
Otherwise, this contradicts Lemma \ref{lem:root-less-than-cycle} and $Q_3=R_3$. Similarly we deduce that $k_7=4$ and  $k_6=1$. One may also notice that the coefficient of $\al_6$ in $\Pi_3(\al_1), \Pi_3(\al_4)$, and $\Pi_3(\al_7)$ are all $-1$, thus the coefficients of $\al_6$ in $w(\al_1), w(\al_4)$, and $w(\al_7)$ are also $-1$. This shows that they are in $\Phi^--\Phi^-_{\mu^\top}$.

Lastly, we need to choose $\Pi_2$ such that $\Pi_2(w)<0$ for all $w\in \Pi_3\Pi_4(Q_2)$. We may similarly see that this is equivalent to the problem with $\mu''^\top=(33)$. The corresponding Young tableaux are obtained by deleting the last row from $\mu'$ and we see that $(k_3,k_4,k_5)=(1,3,5)$. In other words, the unique choice for $w_\mu$ is
\[
(s_{321}s_{43}s_{5})(s_{654321}s_{7654}s_{87})(s_{987654321}).
\]

\end{Ex}
\begin{proof}[Proof of Theorem \ref{thm:root-theoretic} part (2) ]
Suppose $w(\al)<0$ for all $\al\in\Delta_{\mu}$. We prove the result by induction on $a$.  Clearly
$R_i=Q_i$ for $2\leq i\leq a$.

We first show that there is a unique choice for $\Pi_a$. Let $x=r_1+\cdots +r_{a-1}-1$. Then $(k_{x+1},\cdots, k_{x+r_a})$ are the entries in the last column of $\mu$. Let $(\al_{l_1},\cdots, \al_{l_{r_a}})$ be the simple roots in the last column of $\lam$. Indeed, one can check that $l_i=1+(i-1)p_1$.  We claim that
 \[
 (k_{x+1},\cdots, k_{x+r_a})=(l_1,\cdots, l_{r_a}).
 \]

 This is clear if $r_a=1$. Now we argue by induction on $r_a$. If $k_{x+r_a}<l_{r_a}$, then by Lemma \ref{lem:descending-simple-roots} $\Pi_a(\al_{l_{r_a}})=\al_{l_{r_a}-r_a}$, which is positive; if $k_{x+r_a}=l_{r_a}+1$, then $w(\al_{r_a})>0$; if $k_{x+r_a}>l_{r_a}+1$, then $(\al_{l_1},\cdots, \al_{l_{r_a}})$ are invariant under $\pi_{k_{x+r_a}}$, and $\pi_{k_{x+1}}\cdots \pi_{k_{x+r_a-1}}$ cannot send $(\al_{l_1},\cdots, \al_{l_{r_a}})$ to negative roots. Thus, $k_{x+r_a}=l_{r_a}$. Notice that for this choice, $(\al_{l_1},\cdots, \al_{l_{r_{a-1}}})$ are invariant under $\pi_{k_{x+r_a}}$. This allows us to define $\Pi_a$ inductively.

The above argument also shows that
$\Pi_a$ is unique. Moreover, it is easy to check that the coefficients of $\al_{x+1}$ in $\Pi_a(\al_{l_i})$'s are $-1$, thus the coefficients in $w(\al_{l_i})$'s are also $-1$. This implies that $w(\al_{l_i})\in \Phi^--\Phi^-_{\mu^\top}$.


Now notice that, $\Pi_2\cdots \Pi_{a-1}$ may be viewed as an element in $W(P_{\mu'^\top})\bs W(\GL_{r+1-r_a})$, where $\mu'=(r_1\cdots r_{a-1})^\top$. The operation on the Young diagram is simply deleting the last column. On the other hand, under the action of $\Pi_a$, the simple roots in the last column  are sent to negative roots. For $1\leq i\leq r_a$, the indices in the $i$th row drop by $i$; and the indices in the remaining rows drop by $r_a$. Thus we reduce the problem to $\GL_{r+1-r_a}$ with partition $(r_1\cdots r_{a-1})^\top$, which is of smaller rank. This allows us to define $w=\Pi_2\cdots \Pi_a$ inductively and uniquely.
\end{proof}

\section{Degenerate Eisenstein Series}\label{sec:degenerate-eis-series}
Let $F$ be a number field. Let $\ba$ be its adele ring. Let $\mu=(t_1\cdots t_b)$ be a partition of $r+1$  with $t_1\geq \cdots \geq t_b>0$. Let $\mu^\top=(r_1\cdots r_a)$ denote the transpose of $\mu$. Let $P=P_{\mu^\top}$.
 Let $\delta_P$ be the modular quasicharacter of $G$ with respect to $P$. We denote by $E_\mu(g,\underline{s})$ the Eisenstein series which corresponds to the induced representation $\Ind^{G(\ba)}_{P(\ba)}\delta_{P}^{\underline{s}}$ (this depends on a choice of test vectors, but we suppress this from the notation). Here $\underline{s}=(s_1,\cdots, s_{a})$ denotes a multi-complex variable.

We now define semi-Whittaker coefficients. Let $\lambda=(p_1\cdots p_m)$ be a general partition of $r+1$.
Fix a nontrivial additive character $\psi:F\bs\ba\to \bc^\times$. Let $\psi_\lambda:U(F)\bs U(\ba)\to \bc^\times$ be the  character  such that it acts as $\psi$ on the  root subgroups associated with $\al\in\Delta_\lam$, and acts trivially otherwise.
Given an automorphic form $f$ on $\GL_{r+1}(\ba)$, we define a $\lambda$-semi-Whittaker coefficient of $f$ as the integral
\[
\int\limits_{U(F)\bs U(\ba)} f(ug)\psi_\lambda(u) \ du.
\]

\begin{Thm}\label{thm:semi-whittaker}
Suppose $\mathrm{Re}(s_i)\gg0$.
\begin{enumerate}[\normalfont(1)]
\item If there is an index $l$ such that $p_1+\cdots +p_l> t_1+\cdots + t_l$, then
\[
\int\limits_{U(F)\bs U(\ba)} E_\mu(ug,\underline{s})\psi_\lambda(u) \ du= 0
\]
for all choices of data.
\item The semi-Whittaker coefficient
\[
\int\limits_{U(F)\bs U(\ba)} E_\mu(ug,\underline{s})\psi_\mu(u) \ du
\]
is nonzero for some choice of data.
\end{enumerate}
\end{Thm}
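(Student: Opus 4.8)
The plan is to prove Theorem \ref{thm:semi-whittaker} by unfolding the Eisenstein series in the region of absolute convergence $\mathrm{Re}(s_i)\gg 0$ and reducing to the root-theoretic input of Theorem \ref{thm:root-theoretic}. First I would write
\[
\int\limits_{U(F)\bs U(\ba)} E_\mu(ug,\underline{s})\,\psi_\lambda(u)\, du
= \sum_{w\in [W(P_{\mu^\top})\bs W(G)]} \int\limits_{U(F)\bs U(\ba)} \Bigl(\sum_{\gamma\in (w^{-1}P w\cap U)(F)\bs U(F)} \varphi_{\underline{s}}(w\gamma u g)\Bigr)\psi_\lambda(u)\, du,
\]
using the Bruhat decomposition $G(F)=\bigsqcup_w P(F)\,w\,U(F)$ together with the minimal coset representatives furnished by Theorem \ref{thm:coset-representative}. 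Collapsing the sum over $\gamma$ against the outer integral in the usual way yields, for each $w$, an integral over $(U\cap w^{-1}\bar U w)(F)\bs (U\cap w^{-1}\bar U w)(\ba)$ (equivalently, over the subgroup $U_w$ of $U$ whose roots are sent to negative roots by $w$) of $\varphi_{\underline{s}}(wu'g)\,\psi_\lambda(u')$ against a unipotent integral over the complementary directions, the latter producing a Fourier-theoretic constraint.

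The key point, which I would isolate as a lemma, is that the $w$-th term vanishes identically unless $w(\al)<0$ for every $\al\in\Delta_\lambda$. Indeed, if some $\al\in\Delta_\lambda$ has $w(\al)>0$, then the corresponding root subgroup $U_\al$ lies in $U\cap w^{-1}Pw$, so the inner function $\varphi_{\underline s}(w u g)$ is left-invariant under $U_\al(\ba)$; but $\psi_\lambda$ is a nontrivial character on $U_\al(F)\bs U_\al(\ba)$ since $\al\in\Delta_\lambda$, so integrating first over that circle kills the term. For part (1), the hypothesis $p_1+\cdots+p_l>t_1+\cdots+t_l$ combined with Theorem \ref{thm:root-theoretic}(1) guarantees that \emph{every} $w\in[W(P_{\mu^\top})\bs W(G)]$ has some $\al\in\Delta_\lambda$ with $w(\al)>0$; hence every term vanishes and the whole integral is $0$. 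For part (2), Theorem \ref{thm:root-theoretic}(2) tells us that when $\lambda=\mu$ exactly one $w=w_\mu$ survives, and moreover $w_\mu(\al)\in\Phi^--\Phi^-_{\mu^\top}$ for all $\al\in\Delta_\mu$, which is precisely the condition ensuring that the surviving inner integral over $U_{w_\mu}$ is a nondegenerate ``Whittaker-type'' integral against $\psi_\mu$ that does not collapse for dimension reasons: the $\psi_\mu$-character is supported exactly on directions $w_\mu$ moves into $\bar U$, with none of its support on the Levi directions $\Phi^-_{\mu^\top}$ that would be forced trivial. One then chooses the test vector (the section defining $E_\mu$) to be a pure tensor, factorizable, so that the global integral factors as a product of local integrals, and at each place picks the local data so that the corresponding local integral — essentially a Jacquet integral for $\delta_P^{\underline s}$ twisted by $\psi_\mu$ — is nonzero; convergence and nonvanishing of such local integrals in the range $\mathrm{Re}(s_i)\gg 0$ is standard.

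I expect the main obstacle to be part (2): verifying that the single surviving $w_\mu$-term is genuinely nonzero rather than merely not-obviously-zero. This requires checking that after stripping off all the ``circle'' integrations forced by $\psi_\mu$, the residual integral is an honest (convergent, and for suitable data nonzero) integral — i.e., that $\psi_\mu$ restricted to $U_{w_\mu}$ defines a generic-enough character and that $\varphi_{\underline s}(w_\mu u g)$ as a function of $u\in U_{w_\mu}$ is not orthogonal to it. The clean way is to conjugate by $w_\mu$ and recognize the resulting integral, place by place, as a degenerate Jacquet/Whittaker integral attached to the induced representation $\mathrm{Ind}_{P}\delta_P^{\underline s}$, for which nonvanishing at a well-chosen section is known (or can be arranged by choosing a section supported near a point where $\psi_\mu$ is trivial on the relevant compact directions). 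The rest of the argument — the unfolding, the collapsing of the $\gamma$-sum, and the vanishing criterion — is routine once the Bruhat representatives and Theorem \ref{thm:root-theoretic} are in hand, and I would keep those steps brief, citing the absolute convergence for $\mathrm{Re}(s_i)\gg 0$ to justify interchanging sum and integral throughout.
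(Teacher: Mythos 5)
Your proposal matches the paper's own proof almost exactly: unfold the Eisenstein series along $W(P_{\mu^\top})\bs W(G)$, observe that the $w$-term vanishes whenever some $\al\in\Delta_\lambda$ satisfies $w(\al)>0$, and apply Theorem~\ref{thm:root-theoretic} to get vanishing in (1) and uniqueness of the surviving term $w_\mu$ in (2), whose value for factorizable data is a product of local integrals nonzero in the region of absolute convergence. The one small slip is that the surviving contribution is a full adelic integral over $U_{w_\mu}(\ba)$ rather than the compact quotient integral you wrote --- the compact, vanishing-forcing Fourier integration is over the complementary piece $(U\cap w^{-1}Uw)(F)\bs (U\cap w^{-1}Uw)(\ba)$ --- but this is notational and does not affect the argument.
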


\begin{proof}
For $\mathrm{Re}(s_i)\gg0$ we unfold the Eisenstein series. Thus we need to study the space $P\bs G/U$, and analyze the contribution from each representative. By the Bruhat decomposition, we identify $P\bs G/U$ with $W(P)\bs W(G)$. To prove part (1), it suffices to show that for every $w\in W(P)\bs W(G)$, there is a $u\in U$ such that $\psi_\lam(u)\neq 1$ and $wuw^{-1}\in U$. This follows from Theorem \ref{thm:root-theoretic} part (1).

To prove part (2), again by Theorem \ref{thm:root-theoretic} part (2), we see that the only contribution comes from $w_\mu$. Thus,
\[
\int\limits_{U(F)\bs U(\ba)} E_\mu(ug,\underline{s})\psi_\lambda(u) \ du=\int\limits_{U_{w_\mu}(\ba)} f(w_\mu ug,\underline{s})\psi_\lambda(u) \ du,
\]
where $U_{w_\mu}$ is the subgroup of $U$ which corresponds to the roots $\al>0$ such that $w_\mu(\al)<0$.
The right-hand side is factorizable, and its value is a ratio of zeta functions. For $\mathrm{Re}(s_i)\gg0$ it is nonzero.  This completes the proof.
\end{proof}

The corresponding local result also holds and can be proved similarly; see also \cite{MoeW}.
\begin{Thm}\label{thm:semi-whittaker-local}
Let $F$ be a non-Archimedean local field.
\begin{enumerate}[\normalfont(1)]
\item If there is an index $l$ such that $p_1+\cdots +p_l> t_1+\cdots + t_l$, then
\[
\dim\Hom_{U(F)}(\Ind_{P(F)}^{G(F)}\delta_P^{\underline{s}},\psi_\lam)=0.
\]
\item If $\lambda=\mu$, then
\[
\dim\Hom_{U(F)}(\Ind_{P(F)}^{G(F)}\delta_P^{\underline{s}},\psi_\mu)=1.
\]
\end{enumerate}
\end{Thm}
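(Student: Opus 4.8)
The plan is to reduce the local statement to the root-theoretic input (Theorem \ref{thm:root-theoretic}) via a Bruhat-cell analysis of the induced module, in complete parallel with the global unfolding already used in the proof of Theorem \ref{thm:semi-whittaker}. First I would apply the Bruhat decomposition to write $\Ind_{P(F)}^{G(F)}\delta_P^{\underline{s}}$ as an iterated extension of the standard subquotients indexed by $W(P)\bs W(G)$, using the geometric filtration of the flag variety $P\bs G$ by $U(F)$-orbits (the Bernstein--Zelevinsky / Casselman filtration of induced representations). Each subquotient is, as a $U(F)$-module, of the form $\mathrm{ind}_{U_w(F)}^{U(F)}(\chi_w)$ for a character $\chi_w$ determined by $w$, $\delta_P$ and $\underline s$, where $U_w$ is the subgroup spanned by the positive roots sent negative by $w$. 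Since $\dim\Hom_{U(F)}(-,\psi_\lambda)$ is subadditive in short exact sequences, part (1) follows once we show each $\Hom_{U(F)}(\mathrm{ind}_{U_w(F)}^{U(F)}\chi_w,\psi_\lambda)=0$, and for part (1) that vanishing is exactly the content of Theorem \ref{thm:root-theoretic}(1): there is a simple root $\al\in\Delta_\lambda$ with $w(\al)>0$, so the one-parameter subgroup $x_\al(F)$ lies in the domain of integration and $\psi_\lambda$ is nontrivial on it, forcing the twisted Jacquet module / Frobenius reciprocity pairing to vanish.

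For part (2) I would argue that, by Theorem \ref{thm:root-theoretic}(2), exactly one Bruhat cell — the one attached to $w_\mu$ — can support a nonzero $\psi_\mu$-equivariant functional, so $\dim\Hom_{U(F)}(\Ind_{P(F)}^{G(F)}\delta_P^{\underline s},\psi_\mu)\le 1$. The matching lower bound requires producing an actual nonzero functional on the $w_\mu$-subquotient. Here one uses that $w_\mu(\Delta_\mu)\subset \Phi^--\Phi^-_{\mu^\top}$: this means every simple root in $\Delta_\mu$ is conjugated by $w_\mu$ into a \emph{negative} root \emph{not} lying in the Levi $M_{\mu^\top}$, so the character $\psi_\mu$ transported across the cell is compatible with the restriction of $\delta_P^{\underline s}$ to $U_{w_\mu}(F)\cap w_\mu^{-1}P(F)w_\mu$ (which is trivial, since those root subgroups sit outside the Levi), and the integral defining the functional on the open-ish cell is a product of local Tate-type integrals / Gindikin--Karpelevich factors that converges and is nonzero for $\mathrm{Re}(s_i)\gg0$. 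This is the exact local analogue of the computation $\int_{U_{w_\mu}(F)}f(w_\mu u g)\psi_\mu(u)\,du$ appearing in the proof of Theorem \ref{thm:semi-whittaker}(2).

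The main obstacle is not the combinatorics — that is entirely absorbed into Theorem \ref{thm:root-theoretic} — but the bookkeeping that turns ``only the $w_\mu$-cell survives'' into an honest statement about a possibly non-semisimple filtration: one must check that the functional constructed on the $w_\mu$-subquotient actually lifts to the full induced representation, i.e. that there is no extension obstruction, and that no \emph{other} cell contributes a functional that becomes visible only on a subrepresentation rather than a subquotient. The cleanest way around this is to note that $w_\mu$ is the long (or otherwise extremal) element making $U_{w_\mu}$ as large as possible among cells meeting the support condition, so its cell is open in the relevant stratum and the corresponding subquotient is a \emph{sub}representation of a suitable quotient, making Frobenius reciprocity apply directly; alternatively, one invokes the Moeglin--Waldspurger dimension-one result \cite{MoeW} for semi-Whittaker (degenerate Whittaker) functionals on standard modules, which already packages this exactness. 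I would structure the writeup to cite \cite{MoeW} for the subquotient-counting formalism and give the explicit cell computation only for the $w_\mu$ term. With $\mathrm{Re}(s_i)\gg0$ assumed throughout, all convergence issues are automatic, so the argument is genuinely a translation of Theorem \ref{thm:root-theoretic} through the Bruhat filtration.
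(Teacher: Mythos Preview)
Your approach is essentially the same as the paper's --- the paper does not write out a separate argument but simply says the local result ``can be proved similarly'' to Theorem \ref{thm:semi-whittaker} and points to \cite{MoeW}; the Bruhat-cell filtration combined with Theorem \ref{thm:root-theoretic} is exactly that ``similar'' proof.

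One remark that would streamline your write-up and dissolve what you call the ``main obstacle'': over a non-Archimedean field the twisted Jacquet functor $\pi\mapsto J_{U(F),\psi_\lambda}(\pi)$ is \emph{exact} (Bernstein--Zelevinsky), not merely subadditive. Hence the dimension of $\Hom_{U(F)}(\Ind_{P(F)}^{G(F)}\delta_P^{\underline s},\psi_\lambda)$ is \emph{equal} to the sum over $w\in W(P)\bs W(G)$ of the contributions from the Bruhat subquotients, with no extension obstruction to worry about. Your lifting discussion in part (2) is therefore unnecessary: once Theorem \ref{thm:root-theoretic}(2) says only the $w_\mu$-cell contributes and that contribution is one-dimensional, exactness immediately gives $\dim=1$. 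A side benefit is that this argument is purely algebraic and valid for \emph{every} $\underline s$, matching the statement of Theorem \ref{thm:semi-whittaker-local} which, unlike the global Theorem \ref{thm:semi-whittaker}, carries no hypothesis $\mathrm{Re}(s_i)\gg0$; your invocation of convergence and Tate-type integrals is a vestige of the global unfolding and can be dropped.
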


\section{Unipotent Orbits and Fourier Coefficients}\label{sec:unipotent-orbits}

\subsection{Fourier coefficients associated with unipotent orbits}\label{subsec:fourier-unipontent}

Given a unipotent orbit, we can associate a set of Fourier coefficients. General references for unipotent orbits are Carter \cite{Ca} and Collingwood and McGovern \cite{CM}. For the local version of this association see \cite{Moe,MoeW}. For global details see Jiang and Liu \cite{JL} and Ginzburg \cite{ginzburg2006certain,ginzburgaim}.

We work with the global setup. Let $F$ be a number field, and $\ba$ be its adele ring. Fix a nontrivial additive character $\psi:F\bs \ba\to\bc^\times$. The unipotent orbits of $\GL_r$ are parameterized by partitions of $r$. Let $\sco=(p_1\cdots p_k)$ with $p_1+\cdots+p_k=r$ be a unipotent orbit.  We shall always assume $p_1\geq p_2\geq \cdots\geq p_k>0$. To each $p_i$ we associate the diagonal matrix
                \[
                \mathrm{diag}(t^{p_i-1},t^{p_i-3},\cdots,t^{3-p_i},t^{1-p_i}).
                \]
                Combining all such diagonal matrices and arranging them  in decreasing order of the powers of $t$, we obtain a one-dimensional torus $h_{\sco}(t)$. For example, if $\sco=(3^21)$, then
                \[
                h_\sco(t)=\mathrm{diag}(t^2,t^2,1,1,1,t^{-2},t^{-2}).
                \]

                The one-dimensional torus $h_{\sco}(t)$ acts on $U$ by conjugation. Let $\alpha$ be a positive root and $x_\alpha(a)$ be the one-dimensional unipotent subgroup in $U$ corresponding to the root $\alpha$. There is a nonnegative integer $m$ such that
                                    \begin{equation}\label{Conjugation}
                                            h_{\sco}(t)x_\alpha(a)h_{\sco}(t)^{-1}=x_\alpha(t^ma).
                                    \end{equation}
                On the subgroups $x_\al(a)$ which correspond to negative roots $\alpha$, the torus $h_\sco(t)$ acts with non-positive powers.

                Given a nonnegative integer $l$,  we denote by $U_l(\sco)$ the subgroup of $U$ generated by all $x_\alpha(a)$ satisfying the Eq. (\ref{Conjugation}) with $m\geq l$. We are mainly interested in $U_l(\sco)$ where $l=1$ or $l=2$.

                Let
                \[
                M(\sco)=T\cdot \la x_{\pm\alpha}(a):h_{\sco}(t)x_\alpha(a)h_\sco(t)^{-1}=x_\alpha(a)\ra.
                \]
                The algebraic group $M(\sco)$ acts by conjugation on the abelian group $U_2(\sco)/U_3(\sco)$. If the ground field is algebraically closed, then under this action of $M(\sco)$ on the group $U_2(\sco)/U_3(\sco)$, there is an open orbit. Denote a representative of this orbit by $u_2$. It follows from the general theory that the connected component of the stabilizer of this orbit inside $M(\sco)$ is a reductive group. Denote by $Stab_\sco^0$ this connected component of the stabilizer of $u_2$.

                 The group $M(\sco)(F)$ acts on the group of all characters of $U_2(\sco)(F)\bs U_2(\sco)(\ba)$. Consider the subset of all characters such that over the algebraic closure, the connected component of the stabilizer inside $M(\sco)(F)$ is  equal to $Stab_\sco^0$. We denote such a character by $\psi_{U_2(\sco)}$. Given an automorphic function $\varphi(g)$ on $\GL_r(\ba)$,  the Fourier coefficients we want to consider are
                 \[
                 \int\limits_{U_2(\sco)(F)\bs U_2(\sco)(\ba)}\varphi(ug)\ \psi_{U_2(\sco)}(u) \ du.
                 \]
                 In this way, we associate with each unipotent orbit $\sco$ a set of Fourier coefficients. When the partition is $\sco=(r)$, the Fourier coefficients associated to $\sco$ are the Whittaker coefficients.

Let us recall the partial ordering defined on the set of unipotent orbits. Given $\sco_1=(p_1\cdots p_k)$ and $\sco_2=(q_1\cdots q_l)$, we say that $\sco_1\geq \sco_2$ if $p_1+\cdots+p_i\geq q_1+\cdots+q_i$ for all $1\leq i\leq l$. If $\sco_1$ is not greater than $\sco_2$ and $\sco_2$ is not greater than $\sco_1$, we say that $\sco_1$ and $\sco_2$ are not comparable.

                \begin{Def}\label{def:unipotentorbit}
                Let $\pi$ be an automorphic representation of $\GL_r(\ba)$. Let $\sco(\pi)$ denote the set of unipotent orbits of $\GL_r$ defined as follows. A unipotent orbit $\sco$ is in $\sco(\pi)$ if $\pi$ has a nonzero Fourier coefficient which is associated with the unipotent orbit $\sco$, and for all $\sco'>\sco$, $\pi$ has no nonzero Fourier coefficient associated with $\sco'$.
                \end{Def}

                Definition \ref{def:unipotentorbit} is described in the global setup. One may have a similar definition in the local context where Fourier coefficients are replaced by twisted Jacquet modules. We omit the details.

                \begin{Rem}
                It is expected that for any automorphic representation $\pi$, the set $\sco_G(\pi)$ is a singleton (see \cite{ginzburg2006certain} Conjecture 5.4). In this paper, the notation $\sco_G(\pi)=\mu$  means that the set $\sco_G(\pi)$ is a singleton, consisting of the orbit $\mu$ only.
                \end{Rem}

\subsection{Connection to Semi-Whittaker Coefficients}\label{sec:relations-between-coefficients}
There is a strong relation between semi-Whittaker coefficients and Fourier coefficients associated with unipotent orbits. We prove a global version and state a local version in this section. \textit{Notice: the results and notations are independent of the rest of this paper.}
\subsubsection{Statements}
Let $\sco=(p_1\cdots p_m)$ be a unipotent orbit for $\GL_r$. Let $\lam=(q_1\cdots q_n)$ be a general partition of $r$. Define a character $\psi_\lam: U(F)\bs U(\ba)\to \bc^\times$ as in the previous sections.

\begin{Prop}\label{prop:app-b-1}
Let $\pi$ be an automorphic representation on $\GL_r(\ba)$. The following are equivalent:
\begin{enumerate}[\normalfont(1)]
\item The unipotent orbit attached to $\pi$ is
$\sco(\pi)=\sco.$
\item If $q_1+\cdots +q_i\geq p_1+\cdots +p_i$ for some $i$, then
\[
\int\limits_{U(F)\bs U(\ba)} f(ug)\psi_\lam (u)\ du=0
\]
for all choices of data; and if $\lam=\sco$, then
\[
\int\limits_{U(F)\bs U(\ba)} f(ug)\psi_\lam (u)\ du
\]
is nonzero for some choice of data.
\end{enumerate}

\end{Prop}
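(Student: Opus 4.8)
The plan is to route the whole equivalence through a single comparison lemma for a fixed partition, after which Proposition \ref{prop:app-b-1} reduces to elementary facts about the dominance order. (All the unipotent integrals below are over compact quotients $V(F)\bs V(\ba)$, so convergence and Fubini-type interchanges are unrestricted.) \emph{Step 1: the dictionary.} For each partition $\sigma=(\sigma_1\ge\cdots\ge\sigma_k)$ of $r$ one shows that $\pi$ has a nonzero $\sigma$-orbit Fourier coefficient $\int_{U_2(\sigma)(F)\bs U_2(\sigma)(\ba)}\varphi(ug)\psi_{U_2(\sigma)}(u)\,du$ (for some $\varphi$ in the space of $\pi$ and some $g$) if and only if it has a nonzero $\sigma$-semi-Whittaker coefficient $\int_{U(F)\bs U(\ba)}\varphi(ug)\psi_\sigma(u)\,du$. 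The engine is the global exchange-of-roots lemma of Ginzburg--Rallis--Soudry, in the form used in \cite{ginzburgaim, JL}: if a unipotent group splits as $C=C_1C_2$ with $C_1,C_2$ abelian and $C_2$ normal, $\psi_C$ is a character trivial on $C_2$, and $\widetilde C_1$ is a unipotent ``dual'' to $C_1$ under the commutator pairing determined by $\psi_C$, then, after normalizing the ambient groups appropriately, the integral $\int_{C_1(F)C_2(F)\bs C_1(\ba)C_2(\ba)}\varphi(cg)\,\psi_C(c)\,dc$ is not identically zero in $(g,\varphi)$ precisely when the analogous integral over $\widetilde C_1C_2$ is not. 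Starting from $U_2(\sigma)$ with $\psi_{U_2(\sigma)}$ one fills in, by a chain of such exchanges, the root subgroups of $h_\sigma$-weight $1$; the remaining positive root subgroups have $h_\sigma$-weight $0$, lie in $M(\sigma)$, and there the resulting character restricts to a semi-Whittaker datum for the partition obtained by deleting the first column of the Young diagram of $\sigma$, so an induction on $r$ identifies the total integral with the $\psi_\sigma$-coefficient on $U$ (running the exchanges in reverse recovers $\psi_{U_2(\sigma)}$ as a character whose connected stabilizer is $Stab_\sigma^0$). This is the global counterpart of the local statements of \cite{MoeW, gomez2015generalized}.

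\emph{Step 2: raising to the sorted partition.} For a general composition $\lambda=(q_1\cdots q_n)$ one shows that a nonzero $\lambda$-semi-Whittaker coefficient forces a nonzero $\lambda^{\mathrm{sort}}$-semi-Whittaker coefficient, where $\lambda^{\mathrm{sort}}$ is $\lambda$ rearranged into non-increasing order. It suffices to treat one adjacent transposition of blocks, $(\dots,q_j,q_{j+1},\dots)\mapsto(\dots,q_{j+1},q_j,\dots)$ with $q_j<q_{j+1}$: conjugating by the permutation interchanging those two blocks sends $P_\lambda$ to the parabolic of the new composition and $\psi_\lambda$ to the expected character, at the cost of the root subgroups linking the two blocks, which are restored by one further application of the exchange lemma. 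This is the $\GL$-specialization of the raising mechanism of \cite{jiang2014raising}.

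\emph{Step 3: conclusion.} Two elementary facts suffice: (a) rearranging a composition into non-increasing order does not decrease any partial sum, so every partial sum of $\lambda^{\mathrm{sort}}$ is at least the corresponding one of $\lambda$; (b) for partitions $\sigma,\tau$ of $r$ one has $\sigma\not\le\tau$ in the dominance order exactly when $\sigma_1+\cdots+\sigma_i>\tau_1+\cdots+\tau_i$ for some $i$. Moreover, since the dominance order on partitions of $r$ is finite, the convention $\sco(\pi)=\sco$ (a singleton) forces every orbit $\sco'$ with $\sco'\not\le\sco$ to have all of its Fourier coefficients vanish. Assume (1). By Step 1 the nonvanishing of the $\sco$-orbit coefficient yields the nonvanishing of the $\sco$-semi-Whittaker coefficient. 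If $\lambda$ is a composition with $q_1+\cdots+q_i>p_1+\cdots+p_i$ for some $i$ and its $\psi_\lambda$-coefficient were nonzero, then Step 2 followed by Step 1 would produce a nonzero $\lambda^{\mathrm{sort}}$-orbit coefficient, while (a) and (b) give $\lambda^{\mathrm{sort}}\not\le\sco$, contradicting (1); thus (2) holds. Conversely, assume (2); the nonvanishing clause together with Step 1 gives a nonzero $\sco$-orbit coefficient, and for any partition $\sco'\not\le\sco$ fact (b) shows it satisfies the hypothesis of the vanishing clause of (2) (viewed as a composition), so its $\psi_{\sco'}$-coefficient, hence by Step 1 its orbit coefficient, vanishes. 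This covers every $\sco'>\sco$ and every $\sco'$ incomparable with $\sco$, so $\sco$ is the unique maximal orbit carrying a nonzero Fourier coefficient, i.e. $\sco(\pi)=\sco$, proving (1).

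The crux is Step 1: one must write down the explicit chain of root exchanges connecting $U_2(\sigma)$ with $\psi_{U_2(\sigma)}$ to $U$ with $\psi_\sigma$, track the $h_\sigma$-grading and the stabilizer character through each step, and verify that every intermediate configuration satisfies the hypotheses of the exchange-of-roots lemma. Step 2 is of the same nature but lighter, and Step 3 is purely combinatorial.
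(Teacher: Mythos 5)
Your overall skeleton (root exchange plus Fourier expansion plus dominance-order combinatorics, run through sorted rearrangements) is the right one, and Step 3 is fine, but Step 1 as stated contains a genuine gap. You claim an \emph{unconditional} dictionary: for each sorted partition $\sigma$, the $\sigma$-orbit coefficient is nonvanishing iff the $\sigma$-semi-Whittaker coefficient is, and you propose to prove it by ``a chain of root exchanges'' followed by an induction on $r$. Root exchange alone only connects $U_2(\sigma)$ with $\psi_{U_2(\sigma)}$ to the groups $V_{p_1-1}$, resp.\ $U'_{(q_1\cdots q_i),1}$, for hook-type data (this is Lemma \ref{rootexchange2} and Lemma \ref{lem:root-exchange}); to pass from these truncated integrals to the full integral over $U$ with $\psi_\sigma$ one must Fourier-expand along the abelian row subgroups $R_{p_1},R_{p_1+p_2},\dots$ linking consecutive blocks, and the \emph{nontrivial} orbits in those expansions produce coefficients attached to strictly larger partitions such as $(p_1+p_2,p_3\cdots p_m)$. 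These terms do not vanish for free; they vanish only under the standing hypothesis that larger (or incomparable) orbits support no coefficients --- which is exactly hypothesis (1), respectively the vanishing clause of (2). This is why the paper never asserts your Step 1: it proves each implication separately, invoking whichever vanishing hypothesis is currently available to kill the larger-orbit terms (see the chain from Eq.~(\ref{eq:app-B-4}) to Eq.~(\ref{eq:app-B-7}), and Lemmas \ref{lem:app-B-1}--\ref{lem:app-B-2}). An unconditional dictionary in the spirit of \cite{gomez2015generalized} may well be true, but it is a substantially deeper statement than a chain of root exchanges, and your proof does not supply it. The fix is to make Step 1 conditional (on the vanishing of all coefficients for orbits strictly above the partition in question) and to verify at each application in Step 3 that the needed vanishing is already in hand; once you do that you have essentially reconstructed the paper's two-directional argument.

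Step 2 has a smaller but related defect: conjugating the integral over all of $U$ by the block-transposing permutation does not return an integral over $U$, and ``one further application of the exchange lemma'' is not enough to restore the configuration; the hypotheses of the exchange lemma (abelian complements, normality, the commutator pairing) must be checked for the specific groups that arise. The paper sidesteps a full reordering statement for semi-Whittaker coefficients by working with the truncated groups $V_{m'}$ and $U'_{(q_1\cdots q_i),k}$: only the initial segment $(q_1\cdots q_i)$ is sorted, the resulting hook orbit $(q_1'\cdots q_i'1\cdots1)$ is compared with $\sco$, and the remaining rows are handled by the expansion argument. Finally, note that the inequality in clause (2) should be read as strict (as in the paper's proof and in your Step 3); with $\geq$ the clause would contradict the nonvanishing assertion at $\lambda=\sco$.
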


The corresponding local version is also true.
\begin{Prop}\label{prop:app-b-2}
Let $F$ be a non-Archimedean local field.
Let $\pi$ be a smooth representation on $\GL_r(F)$. The following are equivalent:
\begin{enumerate}[\normalfont(1)]
\item The unipotent orbit attached to $\pi$ is
$\sco(\pi)=\sco.$
\item If $q_1+\cdots +q_i\geq p_1+\cdots +p_i$ for some $i$, then the twisted Jacquet module
$J_{U(F),\psi_\lam}(\pi)=0$;
and if $\lam=\sco$, then
$J_{U(F),\psi_\lam}(\pi)\neq 0.$
\end{enumerate}
Moreover, if $p_i$'s have the same parity, then
\[
J_{U(F),\psi_\sco}(\pi)\cong J_{U(F),\psi_{U_2(\sco)}}(\pi).
\]

\end{Prop}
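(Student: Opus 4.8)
The plan is to recast both sides of the equivalence in the language of generalized (degenerate) Whittaker models and then invoke the local comparison theorems of Mœglin--Waldspurger \cite{MoeW} and Gomez--Gourevitch--Sahi \cite{gomez2015generalized}. First I would fix the dictionary. For a partition $\sco'$ of $r$, the twisted Jacquet module $J_{U(F),\psi_{U_2(\sco')}}(\pi)$ is, by construction, the degenerate Whittaker model of $\pi$ attached to the \emph{neutral} Whittaker pair $(h_{\sco'},\varphi_{\sco'})$ of $\sco'$ (up to the choice of a polarization of the symplectic space $U_1(\sco')/U_2(\sco')$, which does not affect vanishing). On the other hand, for a general composition $\lambda=(q_1\cdots q_n)$ the semi-Whittaker module $J_{U(F),\psi_\lambda}(\pi)$ is the degenerate Whittaker model attached to a Whittaker pair $(S_\lambda,\varphi_\lambda)$ in the sense of \cite{gomez2015generalized}, with $\varphi_\lambda$ a regular nilpotent inside the standard Levi $\mathfrak{m}_\lambda$ (so of Jordan type the partition $\widetilde\lambda$ underlying $\lambda$) and $S_\lambda$ a dominant rational coweight chosen so that the associated unipotent is all of $U$. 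Thus $J_{U(F),\psi_\lambda}(\pi)$ and $J_{U(F),\psi_{U_2(\widetilde\lambda)}}(\pi)$ are two generalized Whittaker models for \emph{the same} nilpotent orbit $\widetilde\lambda$, attached to different Whittaker pairs.

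With this set up, the equivalence (1)$\Leftrightarrow$(2) splits into a vanishing half and a non-vanishing half. For the vanishing half I would use the monotonicity of generalized Whittaker models under enlarging the defining unipotent (the comparison inequalities of \cite{gomez2015generalized}, or the root-exchange lemmas of \cite{descentbook}): passing from the ``large'' pair $(S_\lambda,\varphi_\lambda)$ to the neutral pair $(h_{\widetilde\lambda},\varphi_{\widetilde\lambda})$ gives $J_{U(F),\psi_\lambda}(\pi)\ne 0\Rightarrow J_{U(F),\psi_{U_2(\widetilde\lambda)}}(\pi)\ne 0$, hence $\widetilde\lambda$ lies in the wave front set of $\pi$ by \cite{MoeW}. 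The partial-sum inequality relating $\lambda$ and $\sco$ is, after sorting, exactly the condition that $\widetilde\lambda$ is \emph{not} dominated by $\sco$ in the partition order; so if $\sco(\pi)=\sco$ this forces $J_{U(F),\psi_\lambda}(\pi)=0$, which is the vanishing assertion of (2), and conversely a violation of it produces a nonzero coefficient for an orbit not dominated by $\sco$, contradicting $\sco(\pi)=\sco$. (For the $\lambda$ not satisfying the inequality no claim is made, consistent with the fact that strictly smaller semi-Whittaker coefficients may well be nonzero.) For the non-vanishing half one must go the other way: assuming $\sco$ is the \emph{top} orbit of $\pi$, complete the neutral coefficient $J_{U_2(\sco),\psi_{U_2(\sco)}}(\pi)\ne 0$ (which holds by \cite{MoeW}) up to $J_{U,\psi_\sco}(\pi)\ne 0$, by building $U$ from $U_2(\sco)$ one root group at a time and Fourier-expanding at each step; the only possible obstruction at a step is a contribution supported on a strictly larger nilpotent orbit, which vanishes precisely because $\sco$ is maximal. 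This is the $p$-adic ``raising'' argument of \cite{MoeW}, and I expect it to be the technical heart of the proof.

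For the ``moreover'' clause I would argue more directly and without any wave-front hypothesis. When all parts $p_i$ have the same parity, the grading defined by $h_\sco$ has no odd part, so $U_1(\sco)=U_2(\sco)$ and there is no symplectic weight-one space; then $U$ is obtained from $U_2(\sco)$ by successively adjoining pairs of root groups $X_\alpha,X_\beta$ on which $\psi_\sco$ is trivial and with $X_{\alpha+\beta}\subset U_2(\sco)$ carrying a nontrivial component of $\psi_{U_2(\sco)}$, and each such adjunction is neutral for the twisted Jacquet functor by the ``exchanging roots'' lemma (\cite{descentbook}; \cite{gomez2015generalized}). Iterating yields a functorial isomorphism $J_{U(F),\psi_\sco}(\pi)\cong J_{U(F),\psi_{U_2(\sco)}}(\pi)$ for every smooth $\pi$. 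The parity hypothesis is exactly what guarantees that every root adjoined in this process pairs up in the required way; when the parities differ a weight-one space survives and one instead picks up a Weil-representation (Heisenberg) correction, which is why the clause is asserted only under the parity condition.

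Finally, the combinatorial step matching the stated inequality on $q_1+\cdots+q_i$ and $p_1+\cdots+p_i$ with the dominance order on $\widetilde\lambda$ and $\sco$ is of the same type as Lemma \ref{lem:traspose-unipotent}, so I would treat it as routine. The global statement (Proposition \ref{prop:app-b-1}) then follows by the same scheme, replacing twisted Jacquet modules with the corresponding period integrals and the local non-vanishing input with the analytic fact used in the proof of Theorem \ref{thm:semi-whittaker} (for $\mathrm{Re}(s_i)\gg 0$ the relevant Eulerian integral is a nonzero ratio of zeta functions); the extra difficulty globally is to realize the ``raising''/completion step for automorphic forms, for which one follows \cite{MoeW} together with its automorphic adaptations in \cite{ginzburg2006certain}.
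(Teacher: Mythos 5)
Your proposal reaches the right conclusion but by a genuinely different route from the paper. The paper does not actually write out a separate local argument at all: it proves the global Proposition \ref{prop:app-b-1} by hand --- iterated Fourier expansions along the row subgroups $R_{m'}$ (Lemmas \ref{lem:app-B-1}, \ref{lem:app-B-2}), combined with the root-exchange lemma of \cite{descentbook}/\cite{cai2016} (Lemmas \ref{rootexchange2}, \ref{lem:root-exchange}) to pass between the coefficient on $U_2(\sco')$ and the coefficient on $U'_{(q_1\cdots q_i),1}$ --- and then declares the local case analogous (twisted Jacquet modules in place of period integrals). You instead package everything into the Whittaker-pair formalism of \cite{gomez2015generalized} and the wave-front machinery of \cite{MoeW}. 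Locally this is a legitimate and arguably cleaner route (the paper itself remarks that \cite{gomez2015generalized} Theorem E is ``of similar flavor''), and your identification of the parity hypothesis with the vanishing of the odd graded piece ($U_1(\sco)=U_2(\sco)$, no oscillator correction) is exactly the right reason the ``moreover'' clause holds. What the paper's elementary approach buys is that it transfers verbatim to the global setting; your closing claim that the global statement ``follows by the same scheme'' is the one place where your machinery does not carry over, since the comparison theorems of \cite{gomez2015generalized} are local, and the explicit expansion/root-exchange argument is precisely what replaces them globally.

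One substantive thin spot: the only comparison you state explicitly is the monotone direction $J_{U(F),\psi_\lam}(\pi)\neq 0\Rightarrow J_{U_2(\widetilde\lam)(F),\psi_{U_2(\widetilde\lam)}}(\pi)\neq 0$, and both of your ``vanishing'' sentences are really the implication (1)$\Rightarrow$(2). The implication (2)$\Rightarrow$(1) (vanishing of all Fourier coefficients attached to orbits $\sco'\not\leq\sco$ from vanishing of semi-Whittaker coefficients), and likewise the nonvanishing half of (1)$\Rightarrow$(2), require the \emph{converse} comparison: nonvanishing of the neutral coefficient for $\sco'$ forces nonvanishing of the full-$U$ semi-Whittaker coefficient $J_{U(F),\psi_{\sco'}}(\pi)$. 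This converse is false for general groups and is the actual content of the proposition; it is obtained either from the full equivalence in \cite{gomez2015generalized} Theorem E (special to $\GL_r$), or, as in the paper, by the explicit ``raising'' procedure you sketch only for $\lam=\sco$: expand row by row, kill the nontrivial orbits using the (inductively established) vanishing for strictly larger partitions, and apply root exchange. If you make that converse step explicit --- and run it as a downward induction on the orbit poset so that it applies to every $\sco'\not\leq\sco$, not just to the top orbit $\sco$ --- the argument is complete. Finally, note that the inequality in (2) should be read as strict (as in the paper's own proof of Proposition \ref{prop:app-b-1}); your reformulation via dominance after sorting, in the spirit of Lemma \ref{lem:traspose-unipotent}, is the correct reading.
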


\begin{Rem}
When $\sco=(n^ab)$, these results are proved in \cite{cai2016} Theorem 7.7 and 7.8.

\end{Rem}

\begin{Rem}
A local result of similar flavor can also be found in Gomez, Gourevitch and Sahi \cite{gomez2015generalized} Theorem E.
\end{Rem}

We present a proof of the global result in this section. The local case is similar and we omit the details. To save space, we write $[V]=V(F)\bs V(\ba)$ for a subgroup $V$ of $U$.

\subsubsection{Part (2) implies part (1)}

We first show that part (2) implies part (1). To prove the vanishing part, we need to show that any unipotent orbit which is greater than or not comparable with $\sco$ does not support any Fourier coefficient. We establish several lemmas.

Let $\sco'=(q_1\cdots q_n)$ be a unipotent orbit which is greater than or not comparable with $\sco$. Then there exists $i$ such that $q_1+\cdots +q_i >p_1+\cdots +p_i$. Let $m'\geq q_1+\cdots +q_i$. Let $\underline{\epsilon}=(\epsilon_{q_1+\cdots + q_i},\cdots, \epsilon_{m'})$ with $\epsilon_{q_1+\cdots + q_i},\cdots, \epsilon_{m'}\in F$. Let $V_{m'}$ be the unipotent radical with Levi $\GL_1^{m'} \times \GL_{r-m'}$. The semi-Whittaker character $\psi_{(q_1\cdots q_i)}$ may be also viewed as a character on $[V_{m'}]$.  Define characters $\psi_{(q_1\cdots q_i), \underline{\epsilon}}:[V_{m'}]\to \bc^\times$,
\[
\psi_{(q_1\cdots q_i), \underline{\epsilon}}(v)=\psi_{(q_1\cdots q_i)}(v) \cdot \psi\left(\sum_{j=q_1+\cdots+q_i}^{m'} \epsilon_{j}v_{j,j+1}\right).
\]
\begin{Lem}\label{lem:app-B-1}
The integral
\[
\int\limits_{[V_{m'}]}
f(ug) \psi_{(q_1\cdots q_i), \underline{\epsilon}}(u)\ du=0
\]
for all choices of data.
In particular,
\[
\int\limits_{[V_{q_1+\cdots +q_i}]} f(ug)\psi_{(q_1\cdots q_i)}(u) \ du=0
\]
for all choices of data.

\end{Lem}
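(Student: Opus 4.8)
The plan is to expand the automorphic form $f$ by the standard Fourier expansion of automorphic forms along the upper unipotent of the bottom $\GL_{r-m'}$-block, so as to rewrite the integral over $[V_{m'}]$ as a (convergent) sum of genuine semi-Whittaker coefficients $\int_{[U]}f(u\gamma g)\psi_{\lam'}(u)\,du$, with $\gamma$ running over coset representatives inside $\GL_{r-m'}(F)$ and $\lam'$ ranging over certain partitions of $r$; each such coefficient is then killed by the vanishing half of Proposition~\ref{prop:app-b-1}(2). Once this is carried out for general $m'$, the first displayed identity follows, and the ``in particular'' assertion is just the case $m'=q_1+\cdots+q_i$, where $\underline{\epsilon}$ is the empty tuple. (When $m'=r-1$ one already has $V_{m'}=U$ and $\psi_{(q_1\cdots q_i),\underline{\epsilon}}$ is literally a semi-Whittaker character; this is the seed of the argument.) Equivalently, the reduction can be done one root at a time by descending induction on $m'$, using the exchange-of-roots technique (cf.\ \cite{descentbook}) to pass from a period over $[V_{m'}]$ to periods over $[V_{m'+1}]$ against characters of the same shape $\psi_{(q_1\cdots q_i),\underline{\epsilon}'}$.

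The key point is the shape of the partitions $\lam'$ that can occur. In every term, the character against which one integrates agrees, on the root subgroups of $e_j-e_{j+1}$ with $1\le j< q_1+\cdots+q_i$, with $\psi_{(q_1\cdots q_i)}$; in particular it is nontrivial on the root subgroup of $e_j-e_{j+1}$ for every $j$ with $q_1+\cdots+q_{i-1}<j<q_1+\cdots+q_i$, and this is unaffected by $\underline{\epsilon}$ and by the expansion (the latter only touches the coordinates beyond the $m'$-th). Hence $\lam'$ has first $i-1$ parts equal to $q_1,\dots,q_{i-1}$ and its next ``break'' at a position $\geq q_1+\cdots+q_i$, so
\[
\lam'_1+\cdots+\lam'_i\ \geq\ q_1+\cdots+q_i\ >\ p_1+\cdots+p_i .
\]
Thus the hypothesis of the vanishing clause of Proposition~\ref{prop:app-b-1}(2) is met by each $\lam'$, so every term, and therefore the whole integral, vanishes for all choices of data. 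Tuples $\underline{\epsilon}$ with some $\epsilon_j=0$ cause no trouble: they merely change which $\lam'$ appear, without affecting the lower bound on $\lam'_1+\cdots+\lam'_i$.

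The step I expect to be the main obstacle is the bookkeeping in this expansion: pinning down exactly which partitions $\lam'$ and which representatives $\gamma$ arise when one unfolds the $\GL_{r-m'}$-Fourier expansion against $\psi_{(q_1\cdots q_i),\underline{\epsilon}}$, and --- in the root-exchange formulation --- verifying the commutator relations that legitimize the exchange, so that the ``generic stretch'' on the roots $e_j-e_{j+1}$ with $q_1+\cdots+q_{i-1}<j<q_1+\cdots+q_i$ is preserved at every stage. Interchanging the (finite, at each stage) Fourier expansion with the unipotent integration is routine. With the shape of the $\lam'$ secured, the inequality $q_1+\cdots+q_i>p_1+\cdots+p_i$ finishes the argument via Proposition~\ref{prop:app-b-1}(2).
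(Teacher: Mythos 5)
Your proposal is correct and, in the descending-induction-on-$m'$ formulation you offer as an alternative, coincides with the paper's argument: the paper Fourier-expands the period along the abelian row subgroup $R_{m'}$ one row at a time, invokes the inductive hypothesis (for all $\underline\epsilon$) to kill both the trivial and nontrivial orbits, and at the base case $V_{m'}=U$ reads off the semi-Whittaker vanishing from $q_1+\cdots+q_i>p_1+\cdots+p_i$ via Proposition~\ref{prop:app-b-1}(2). The bookkeeping you emphasize --- that the first $i-1$ break points of every emerging character $\psi_{\lam'}$ are unchanged and the next break is at position $\geq q_1+\cdots+q_i$, so $\lam'_1+\cdots+\lam'_i\geq q_1+\cdots+q_i$ --- is exactly what makes the induction close.
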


\begin{proof}
We prove this by induction on $r-m'$. If $r-m'=0$, then the integral involved is just a semi-Whittaker coefficient. The result follows from part (2). Assume the result is true for $r-m'$. Then we prove it for $r-m'-1$ if $m'-1\geq q_1+\cdots +q_i$. Let $R_{m'}$ be the unipotent subgroup such that $u_{j,l}=0$ unless $j=m'$. Expand the integral along this unipotent subgroup. Both nontrivial and trivial orbits give zero by induction. This proves the result.

\end{proof}

Define $U'_{(q_1\cdots q_i),k}$ as the subgroup of $V_{q_1+\cdots+q_i}$ such that $u\in U'_{(q_1\cdots q_i),k}$ if $u_{j,l}=0$ for $j=q_1+\cdots +q_k,q_1+\cdots+q_{k+1},\cdots, q_1+\cdots +q_i$. We also define $U'_{(q_1\cdots q_i),i+1}=V_{q_1+\cdots +q_i}$. The character $\psi_{(q_1\cdots q_i)}$ is still a character on $U'_{(q_1\cdots q_i),k}$.

\begin{Lem}\label{lem:app-B-2}
For $1\leq k\leq i$, the integral
\begin{equation}\label{eq:app-B-2}
\int\limits_{[U'_{(q_1\cdots q_i),k}]}f(ug)\psi_{(q_1\cdots q_i)}(u)\ du=0
\end{equation}
for all choices of data.
\end{Lem}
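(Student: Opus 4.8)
I would prove \eqref{eq:app-B-2} by a nested induction: an outer induction on the number of parts $i$ (more precisely, on compositions admitting a partial sum strictly dominating the corresponding partial sum of $\sco$), and, for each fixed $i$, a decreasing induction on $k$. The base case of the inner induction is $k=i+1$, where $U'_{(q_1\cdots q_i),i+1}=V_{q_1+\cdots+q_i}$ and the vanishing is exactly Lemma \ref{lem:app-B-1}; the base case $i=1$ of the outer induction is likewise immediate from Lemma \ref{lem:app-B-1}, including its flexible form with the parameters $\underline\epsilon$.

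For the inner step $k+1\to k$, set $s=q_1+\cdots+q_k$, so that $U'_{(q_1\cdots q_i),k+1}$ differs from $U'_{(q_1\cdots q_i),k}$ precisely by the root subgroups in the $s$-th row, which span an abelian group $X_s$. Although $U'_{(q_1\cdots q_i),k}$ is not normal in $U'_{(q_1\cdots q_i),k+1}$, the obstruction is harmless: if $N'$ denotes the subgroup spanned by the roots in rows $>s$ belonging to $U'_{(q_1\cdots q_i),k+1}$, one has the clean decompositions $U'_{(q_1\cdots q_i),k}=V_{s-1}\rtimes N'$ and $U'_{(q_1\cdots q_i),k+1}=(V_{s-1}\rtimes X_s)\rtimes N'$, with $V_{s-1}$ normal in $V_s=V_{s-1}\rtimes X_s$. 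Since $s$ is a break of $(q_1\cdots q_i)$, the character $\psi_{(q_1\cdots q_i)}$ is trivial on $X_s$ and is fixed by $X_s(F)$-conjugation on $V_{s-1}$, so I may Fourier-expand the inner $[V_{s-1}]$-integral over the character group of the compact quotient $[X_s]$, holding $N'$ fixed. The constant term reproduces the $k+1$ case of \eqref{eq:app-B-2}, zero by the inner inductive hypothesis.

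Each nonconstant term carries an extra additive character on the $s$-th row. Conjugating by an element of the Levi factor $\GL_{r-s}(F)$, rescaling by the torus, and absorbing residual charges by conjugating by suitable $x_{s+1,l}$'s, I reduce this term to $\int_{[U'_{(\lambda'),k}]}f(ug')\psi_{(\lambda')}(u)\,du$, where $\lambda'=(q_1,\dots,q_{k-1},q_k+q_{k+1},q_{k+2},\dots,q_i)$ is obtained from $(q_1\cdots q_i)$ by merging its $k$-th and $(k+1)$-th parts: the rows suppressed in $U'_{(q_1\cdots q_i),k+1}$, namely $q_1+\cdots+q_{k+1},\dots,q_1+\cdots+q_i$, are precisely the partial sums of $\lambda'$ from the $k$-th on, and turning on the charge at position $s$ converts $\psi_{(q_1\cdots q_i)}$ into $\psi_{(\lambda')}$ on this group. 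Since $\lambda'$ has $i-1$ parts and $\lambda'_1+\cdots+\lambda'_{i-1}=q_1+\cdots+q_i>p_1+\cdots+p_i\ge p_1+\cdots+p_{i-1}$, the outer inductive hypothesis annihilates this term; when $k=i$ the merge simply extends the Whittaker pattern beyond position $q_1+\cdots+q_i$, and one appeals directly to Lemma \ref{lem:app-B-1} with a nonzero $\underline\epsilon$. This closes both inductions.

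The delicate step is the reduction just described: one must verify that every nonconstant Fourier term is conjugate to a standard semi-Whittaker integral, and that the merged composition still possesses a partial sum strictly exceeding the matching partial sum of $\sco$, so that the inductive hypothesis — equivalently Lemma \ref{lem:app-B-1}, stated in the $\underline\epsilon$-flexible form for exactly this reason — applies. The Fourier expansions themselves are routine once the $V_{s-1}\rtimes X_s\rtimes N'$ decomposition is identified.
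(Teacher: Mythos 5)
Your proof is correct and follows essentially the same route as the paper: factor $U'_{(q_1\cdots q_i),k}=V_{s-1}\rtimes N'$ with $s=q_1+\cdots+q_k$, Fourier-expand the $[V_{s-1}]$-integral along $X_s=R_s$, identify the constant term as the $k+1$ case (handled by the inner induction, ultimately Lemma~\ref{lem:app-B-1}), and identify the nonconstant terms as semi-Whittaker coefficients for the merged composition $\lambda'=(q_1,\ldots,q_{k-1},q_k+q_{k+1},\ldots,q_i)$. The paper states its induction simply as ``on $i-k$'' and cites the nontrivial term as ``a larger partition\ldots zero by induction'' without explicitly re-verifying that the standing hypothesis holds for $\lambda'$; your nested formulation (outer induction on the number of parts, inner on $k$) makes that dependence explicit, and your check that $\lambda'_1+\cdots+\lambda'_{i-1}=q_1+\cdots+q_i>p_1+\cdots+p_i\ge p_1+\cdots+p_{i-1}$ supplies a point the paper leaves tacit. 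Your separate handling of the step $k=i$ (where the ``merge'' is vacuous and one appeals directly to the $\underline\epsilon$-flexible Lemma~\ref{lem:app-B-1}) corresponds to the paper's base case $i-k=0$, so the two base-case conventions differ only by an offset of one. You correctly observe that $U'_{(q_1\cdots q_i),k}$ is \emph{not} normal in $U'_{(q_1\cdots q_i),k+1}$ (the commutator $[X_s,N']$ lands back in $X_s$), and that the expansion must be carried out via the $V_{s-1}\rtimes X_s\rtimes N'$ structure rather than naively; the paper uses the same factorization but does not flag this. The one step you and the paper treat at the same (brief) level is the reduction of nonconstant terms to a standard representative: this requires organizing the nontrivial characters of $[X_s]$ into a single orbit under suitable rational conjugation, compatibly with the $N'$-integration, which is standard but not spelled out in either argument.
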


\begin{proof}
We prove this by induction on $i-k$. When $i-k=0$, the group $U'_{(q_1\cdots q_i),k}$ is actually $V_{q_1+\cdots+q_i-1}$. Expand the integral (\ref{eq:app-B-2}) along $R_{q_1+\cdots+q_i}$. Both trivial and nontrivial orbits give zero by Lemma \ref{lem:app-B-1}. This establishes the case $i=k$. Assume the result is true for $i-(k+1)$. Then we prove it for $k$. Indeed, we may write
\[
\begin{aligned}
&\int\limits_{[U'_{(q_1\cdots q_i),k}]}f(ug)\psi_{(q_1\cdots q_i)}(u) \ du\\
=&\int\limits_{[U'_{(q_{k+1}\cdots q_i),i-k-1}]}\ \int\limits_{[V_{q_1+\cdots +q_k-1}]}f(uvg)\psi_{(q_1\cdots q_k)}(u) \ du \ \psi_{(q_{k+1}\cdots q_i)}(v)\ dv.
\end{aligned}
\]
Here, we view $U'_{(q_{k+1}\cdots q_i),i-k-1}$ as a subgroup of $U$ via $g\mapsto \mathrm{diag}(I_{q_1+\cdots+q_k},g)$.
Expand the inner integral along $R_{q_1+\cdots +q_k}$. The trivial orbit is zero by induction; the nontrivial orbit gives the Fourier coefficient for a larger partition $(q_1,\cdots, q_{k-1}, q_k+q_{k+1},\cdots, q_i)$, which is also zero by induction. Thus the integral (\ref{eq:app-B-2}) is zero.
\end{proof}

Now we recall a corollary of root exchange. The local version is proved in \cite{cai2016} Lemma 6.5. The global version can be proved analogously.
    \begin{Lem}\label{rootexchange2}
                Let $\pi$ be an automorphic representation on $\GL_r(\ba)$. Let $\sco=(p_1 1^{n-p_1})$. Then
                        \[
                        \int\limits_{[U_2(\sco)]}f(ug)\psi_{U_2(\sco)}(u) \ du =0
                        \]
                        for all choices of data if and only if
                        \[
                        \int\limits_{[V_{p_1-1}]}f(ug)\psi_{(p_1)} (u) \ du=0
                        \]
                        for all choices of data.
                \end{Lem}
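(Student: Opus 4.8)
The plan is to prove Lemma \ref{rootexchange2} by the standard root-exchange (``Fourier-Jacobi swap'') technique, comparing the two unipotent integrals one root pair at a time. Recall that for $\sco=(p_1 1^{n-p_1})$ the group $U_2(\sco)$ sits inside $\GL_n$ as follows: after conjugating $h_\sco(t)$ to the torus $\mathrm{diag}(t^{p_1-1},\dots,t^{1-p_1},1,\dots,1)$, the root spaces on which $h_\sco(t)$ acts with weight $\geq 2$ are exactly the entries $x_{j,l}$ with $1\le j<l\le p_1$ and $l-j\ge 2$ inside the top $p_1\times p_1$ block, together with (after a suitable choice of $u_2$) certain off-block entries; meanwhile $V_{p_1-1}$ is the unipotent radical of the parabolic with Levi $\GL_1^{p_1-1}\times\GL_{n-p_1+1}$, and $\psi_{(p_1)}$ is the generic character on the simple-root entries $x_{j,j+1}$, $1\le j\le p_1-1$. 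The two integrands differ precisely by a chain of ``hook'' subgroups: the entries $x_{j,l}$ with $l-j\ge 2$ (present in $U_2(\sco)$ but, after the character twist, not in $V_{p_1-1}$) versus the first-column entries $x_{j,1}$ of the $\GL_{n-p_1+1}$ block (present in $V_{p_1-1}$ but not $U_2(\sco)$).

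First I would set up the abstract root-exchange lemma (the global analogue of \cite{cai2016} Lemma 6.5, which the excerpt tells us may be assumed, or at least its mechanism): if $C$ is an abelian unipotent subgroup normalized by the stabilizer, decomposed as $C=XY$ with $X,Y$ isotropic for the relevant commutator pairing, the character extends trivially to $X$ and nontrivially-pairing to $Y$, and $[X,\widehat Y]$ closes up appropriately, then
\[
\int\limits_{[DXY]}f(u g)\psi(u)\,du \quad\text{and}\quad \int\limits_{[D\widehat X Y]}f(u g)\psi(u)\,du
\]
vanish simultaneously for all data, where $\widehat X$ is the opposite root group to $X$. Then I would apply this inductively, peeling off one anti-diagonal of the $p_1\times p_1$ block at a time (exchanging $x_{j,l}$ with the corresponding opposite root entry), each exchange preserving the automorphy of the inner integral because the relevant conjugations land inside $U$ or act through the torus. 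After all exchanges the group $U_2(\sco)$ has been transported to a group in the same ``stratum'' as $V_{p_1-1}$ with the generic character $\psi_{(p_1)}$, giving the equivalence.

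The key steps, in order: (i) write down explicitly the torus $h_\sco(t)$, the group $U_2(\sco)$, the open $M(\sco)$-orbit representative $u_2$ and the character $\psi_{U_2(\sco)}$ for the special shape $(p_1 1^{n-p_1})$, and identify $V_{p_1-1}$ and $\psi_{(p_1)}$ concretely; (ii) state the global root-exchange lemma in the form needed; (iii) run the induction on the anti-diagonals of the $p_1$-block, checking at each stage that the hypotheses of the exchange lemma hold (commutator relations, isotropy, and the triviality/nontriviality of the character on the two halves); (iv) conclude. The main obstacle I expect is the bookkeeping in step (i) and (iii): one must be careful that $\psi_{U_2(\sco)}$ is really in the ``generic'' family matching $Stab_\sco^0$ (so that the character is nondegenerate on exactly the right root entries), and that each root exchange is between an \emph{abelian} pair of root groups with the commutators landing back in the unipotent group one is integrating over — for longer hooks one typically has to exchange the entries in the correct order (innermost anti-diagonal first) so that previously-exchanged entries do not obstruct the next exchange. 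Establishing the global version of the exchange lemma itself (Fubini/periodization over $F\bs\ba$, convergence being automatic for automorphic forms) is routine once the local statement from \cite{cai2016} is in hand, so I would cite it and only indicate the adaptation.
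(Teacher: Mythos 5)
The paper itself gives no proof of this lemma: it says only that the lemma is ``a corollary of root exchange,'' cites \cite{cai2016} Lemma 6.5 for the local version, and asserts that the global version is analogous. You have correctly identified root exchange as the mechanism, and the high-level shape of your argument (inventory the two unipotent groups, note that they differ by a collection of root subgroups, exchange those one at a time in a careful order, then pass from local to global by periodization) is the right one and matches what the cited reference does.

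However, your explicit description of $U_2(\sco)$ for the hook orbit $(p_1\, 1^{n-p_1})$ contains errors that would derail the execution, and since the whole argument rests on a correct inventory of the two groups, this is a genuine gap. First, as defined in Section \ref{subsec:fourier-unipontent}, the torus $h_\sco(t)$ has its zero exponents placed in the \emph{middle} of the positive and negative halves coming from the $p_1$ block (the exponents must be weakly decreasing), not at the end. Conjugating $h_\sco(t)$ to $\mathrm{diag}(t^{p_1-1},\dots,t^{1-p_1},1,\dots,1)$ requires a Weyl element that does not preserve $U$, so $U_2(\sco)$ does not become a subgroup of the upper-triangular unipotent living in the top $p_1\times p_1$ corner after that conjugation; the computation must either be done with the original $h_\sco(t)$ or track the conjugation carefully. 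Second, the weight of $h_\sco(t)$ on $x_{j,l}$ when $j$ and $l$ both index entries coming from the $p_1$ block is $2(l-j)$, so the membership threshold for $U_2(\sco)$ is $l-j\geq 1$, not $l-j\geq 2$; the latter threshold describes $U_3(\sco)$. With this corrected, the symmetric difference you describe between $U_2(\sco)$ and $V_{p_1-1}$ is wrong: the entries $x_{j,l}$ with $l-j\geq 2$ in the top block belong to \emph{both} groups, while the actual discrepancies sit in the off-block columns and in the $(n-p_1)\times(n-p_1)$ corner. Until this inventory and the precise form of $\psi_{U_2(\sco)}$ (the ``chain'' character along the positions where $h_\sco(t)$ drops by exactly two) are nailed down, the exchange steps cannot be set up, even though the strategy is sound.
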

By applying this lemma repeatedly, we obtain the following result.

\begin{Lem}\label{lem:root-exchange}
Let $\sco=(q_1\cdots q_i 1^{r-q_1-\cdots q_i})$. Then
   \[
                        \int\limits_{[U_2(\sco)]}f(ug)\psi_{U_2(\sco)}(u) \ du =0
                        \]
                        for all choices of data
                        if and only if
                        \[
                        \int\limits_{[U'_{(q_1\cdots q_i),1}]}f(ug)\psi_{(q_1\cdots q_i)} (u) \ du=0.
                        \]
                        for all choices of data.
\end{Lem}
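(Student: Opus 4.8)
The plan is to argue by induction on $i$, the number of parts of $\sco$ that exceed $1$ (trailing parts equal to $1$ are absorbed into $1^{r-q_1-\cdots-q_i}$). When $i=1$ the orbit is $\sco=(q_1 1^{r-q_1})$; here $U'_{(q_1),1}=V_{q_1-1}$ and $\psi_{(q_1)}$ restricts on it to the character appearing in Lemma \ref{rootexchange2}, so the assertion is precisely that lemma. So assume $i\ge 2$ and that the statement holds, over every general linear group, for orbits with fewer large parts.

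For the inductive step I would peel off the first part $q_1$. Applying the root exchange of Lemma \ref{rootexchange2} relative to the $q_1$-block should rewrite the condition that $\int_{[U_2(\sco)]}f(ug)\psi_{U_2(\sco)}(u)\,du$ vanishes for all data as the condition that
\[
\int\limits_{[V_{q_1-1}]}\ \int\limits_{[U_2(\sco'')]}f(uvg)\,\psi_{U_2(\sco'')}(u)\,du\ \psi_{(q_1)}(v)\,dv=0
\]
for all data, where $\sco''=(q_2\cdots q_i 1^{r-q_1-\cdots-q_i})$ is a unipotent orbit of $\GL_{r-q_1}$ realized on the last $r-q_1$ coordinates, $\psi_{U_2(\sco'')}$ is its generic character, and $\psi_{(q_1)}$ is $\psi$ on $\al_1,\dots,\al_{q_1-1}$. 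The content of this step is that, up to the root exchange already carried out in the proof of Lemma \ref{rootexchange2}, the group $U_2(\sco)$ factors as the $q_1$-part times a conjugate of $U_2(\sco'')$ and the exchange only moves roots inside the $q_1$-part; I would establish it by repeating the computation in the proof of Lemma \ref{rootexchange2} verbatim, now in the presence of the inert extra coordinates. One then applies the induction hypothesis to the inner integral, viewed as a Fourier coefficient on $\GL_{r-q_1}$ of the function $g\mapsto\int_{[V_{q_1-1}]}f(vg)\psi_{(q_1)}(v)\,dv$ (the global root-exchange machinery is set up so that ``for all data'' statements pass to such partial integrals), to replace $\int_{[U_2(\sco'')]}(\cdots)\psi_{U_2(\sco'')}$ by $\int_{[U'_{(q_2\cdots q_i),1}]}(\cdots)\psi_{(q_2\cdots q_i)}$.

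Finally I would reassemble. Inside $\GL_r$, the group $U'_{(q_1\cdots q_i),1}$ is the semidirect product of its normal subgroup $V_{q_1-1}$ (the first $q_1-1$ rows, none of which is a partial-sum row) with the copy of $U'_{(q_2\cdots q_i),1}$ sitting in the last $r-q_1$ coordinates, and under this decomposition $\psi_{(q_1\cdots q_i)}$ restricts to $\psi_{(q_1)}$ on the first factor and to $\psi_{(q_2\cdots q_i)}$ on the second; hence by Fubini the iterated integral above equals $\int_{[U'_{(q_1\cdots q_i),1}]}f(ug)\psi_{(q_1\cdots q_i)}(u)\,du$, and since every step is an equivalence ``for all data'', the induction closes. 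I expect the main obstacle to be the first reduction in the inductive step: making precise that a single application of the root exchange of Lemma \ref{rootexchange2} decouples the $q_1$-block cleanly enough that what remains is genuinely the $U_2$-Fourier coefficient of $\sco''$ on $\GL_{r-q_1}$. This amounts to bookkeeping with the weights of the torus $h_\sco$ and the support of $\psi_{U_2(\sco)}$, but it is the only delicate point.
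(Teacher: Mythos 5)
Your proposal is correct and takes essentially the same route as the paper: the paper's entire proof of Lemma \ref{lem:root-exchange} is the remark that it follows ``by applying [Lemma \ref{rootexchange2}] repeatedly,'' and your induction on the number of parts exceeding $1$ --- one application of the root exchange per block, followed by reassembly via the factorization of $U'_{(q_1\cdots q_i),1}$ as $V_{q_1-1}$ times a copy of $U'_{(q_2\cdots q_i),1}$ in the lower-right $\GL_{r-q_1}$ --- is exactly that repeated application, consistent with the block decomposition the paper itself uses around Eq.~(\ref{eq:app-B-4}).
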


Now we are ready to prove the general case. Let $\sco''=(q_1\cdots q_i 1^{r-q_1-\cdots -q_i})$. We then have
\begin{equation}\label{eq:app-B-3}
\int\limits_{[U_2(\sco')]}f(ug)\psi_{U_2(\sco')}(u)\ du=\int\limits_{X}\int\limits_{[U_2({\sco''})]}f(uxg) \psi_{(q_1\cdots q_i)} \ du \ \psi_X(x) \ d x
\end{equation}
where $X$ is some subgroup which we don't need to specify.  From Lemmas \ref{lem:app-B-2} and \ref{lem:root-exchange}, we see that the inner integral in Eq. (\ref{eq:app-B-3}) is zero.  This proves the vanishing part.

To prove the nonvanishing part, it suffices to show that
\begin{equation}\label{eq:app-B-4}
\int\limits_{[U'_{(p_1\cdots p_m),1}]} f(ug)\psi_{(p_1\cdots p_m)}\ du\neq 0
\end{equation}
for some choice of data.
Notice that the integral in Eq. (\ref{eq:app-B-4}) is
\[
\int\limits_{[U'_{(p_2\cdots p_k),1}]} \ \int\limits_{[U_{(p_1)}]} f(vug) \psi_{(p_1)}(v) \psi_{U_2(p_2\cdots p_k)}(u) \ dv \ du
\]
where $U'_{(p_2\cdots p_k),1}$ is viewed as a subgroup of $U$ via $g\mapsto \mathrm{diag}(I_{p_1},g)$.
Expand the inner integral along the subgroup $R_{p_1}$. The nontrivial orbit gives $0$ since that corresponds to the orbit $(p_1+p_2, p_3 \cdots p_k)$. Thus the above integral equals
\[
\int\limits_{[U'_{(p_1\cdots p_m),2}]} f(ug)\psi_{(p_1\cdots p_m)}(u)\ du.
\]
We now repeat this process by expanding along $R_{p_1+p_2},\cdots, R_{p_1+\cdots+p_{m-1}}$. This shows that the above integral is
\begin{equation}\label{eq:app-B-7}
\int\limits_{[U]} f(ug)\psi_{(p_1\cdots p_m)} (u)\ du.
\end{equation}
This is nonzero for some choice of data from the assumption on semi-Whittaker coefficients.

\subsubsection{Part (1) implies part (2)}

We first consider the vanishing part. Let $\lam=(q_1\cdots q_n)$ such that $q_1+\cdots +q_i>p_1+\cdots +p_i$ for some $i$. We rearrange $(q_1\cdots q_i)$ into non-increasing order to obtain $(q_1' \cdots q_i')$. Consider the unipotent orbit $\sco'=(q'_1\cdots q'_i 1\cdots 1)$. This unipotent orbit does not support any Fourier coefficient. By conjugating the integral with a suitable Weyl group element and applying Lemma \ref{rootexchange2}, we see that
\begin{equation}\label{eq:app-C-after-root-exchange}
\int\limits_{[U'_{(q_1\cdots q_i),1}]} f(ug)\psi_{(q_1\cdots q_i)}(u) \ du=0
\end{equation}
for all choices of data.

\begin{Lem}
For $1\leq k\leq i+1$, the integral
\[
\int\limits_{[U'_{(q_1\cdots q_i),k}]} f(ug)\psi_{(q_1\cdots q_i)}(u) \ du=0
\]
for all choices of data.
In particular,
\begin{equation}\label{eq:app-B-5}
\int\limits_{[V_{q_1+\cdots +q_i}]} f(ug) \psi_{(q_1\cdots q_i)}(u) \ du=0
\end{equation}
for all choices of data.
\end{Lem}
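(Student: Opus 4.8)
The plan is to run the argument of Lemmas~\ref{lem:app-B-1} and~\ref{lem:app-B-2} essentially verbatim, with the one change that the role played there by part~(2) is now played by Eq.~(\ref{eq:app-C-after-root-exchange}) and by the corresponding identity for the partitions that arise along the way. The case $k=1$ is exactly Eq.~(\ref{eq:app-C-after-root-exchange}), and the target is $k=i+1$, since $U'_{(q_1\cdots q_i),i+1}=V_{q_1+\cdots+q_i}$; all the identities below are identities of integrals valid for all $f$, so the phrase ``for all choices of data'' is preserved. I would prove the vanishing for $1\le k\le i+1$ by a double induction: an inner induction on $k$ increasing from $1$, nested in an outer induction on the pair (number $i$ of parts of the ``bad'' prefix, amount $r-(q_1+\cdots+q_i)$ of padding by $1$'s), ordered lexicographically with $i$ first. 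The outer base cases, where $\lambda$ has a single part $q_1>p_1$, are direct: $U'_{(q_1),1}=V_{q_1-1}$ is Eq.~(\ref{eq:app-C-after-root-exchange}), and one gets $U'_{(q_1),2}=V_{q_1}$ by the single Fourier expansion below, the only nontrivial mode being the $(q_1+1)$-semi-Whittaker coefficient---covered by the outer induction (same $i$, smaller padding) because $(q_1+1,1,\cdots,1)\not\le\sco$---while the sub-base $q_1=r$ is just the vanishing of the Whittaker coefficient, valid since $\sco\neq(r)$.

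For the inductive step in $k$, write $U'_{(q_1\cdots q_i),k+1}=U'_{(q_1\cdots q_i),k}\cdot R_{q_1+\cdots+q_k}$, where $R_{q_1+\cdots+q_k}$ is the row-$(q_1+\cdots+q_k)$ root subgroup of $V_{q_1+\cdots+q_i}$; since $\al_{q_1+\cdots+q_k}$ lies on a boundary of $(q_1\cdots q_i)$, the character $\psi_{(q_1\cdots q_i)}$ is trivial on $R_{q_1+\cdots+q_k}$. Expanding the integral over $[U'_{(q_1\cdots q_i),k+1}]$ along $[R_{q_1+\cdots+q_k}]$ produces the trivial mode, which is the integral over $[U'_{(q_1\cdots q_i),k}]$ and vanishes by the inner inductive hypothesis, together with a family of nontrivial modes indexed by the nonzero characters of $[R_{q_1+\cdots+q_k}]$.

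Each nontrivial mode is handled as in the two cited lemmas: after a conjugation one may take the character supported on the superdiagonal entry in row $q_1+\cdots+q_k$, and ``turning on'' $\al_{q_1+\cdots+q_k}$ merges $q_k$ with $q_{k+1}$ when $k<i$, and extends the last part (after adjoining a padded $1$) when $k=i$. The mode thus becomes, up to a harmless conjugation, the $\psi_{\lambda'}$-integral over $[U'_{\lambda',k}]$ for $\lambda'=(q_1,\cdots,q_{k-1},q_k+q_{k+1},q_{k+2},\cdots,q_i)$, respectively $\lambda'=(q_1,\cdots,q_{i-1},q_i+1)$; comparing the ``missing rows'' of the two unipotent subgroups confirms the domains and characters agree. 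Because merging or extending only raises prefix sums, $\lambda'$ still strictly dominates $\sco$ at the relevant index, so $\lambda'\not\le\sco$; as $\sco(\pi)=\{\sco\}$ is a singleton, the orbit obtained from $\lambda'$ by rearranging into non-increasing order and padding with $1$'s supports no Fourier coefficient (by Definition~\ref{def:unipotentorbit} and finiteness of the poset of orbits), so by Lemma~\ref{lem:root-exchange} and a Weyl conjugation the analogue of Eq.~(\ref{eq:app-C-after-root-exchange}) holds for $\lambda'$. Since $\lambda'$ has either fewer parts in its bad prefix, or the same number but less padding, the outer inductive hypothesis gives that the $[U'_{\lambda',k}]$-integral vanishes. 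Hence all modes vanish, the integral over $[U'_{(q_1\cdots q_i),k+1}]$ is zero, and $k+1=i+1$ gives the ``in particular'' statement.

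The main obstacle is the bookkeeping in the nontrivial-mode step: one must check that after the conjugation the integration domain and character are exactly $[U'_{\lambda',k}]$ and $\psi_{\lambda'}$, i.e.\ that the missing-row sets coincide, and that the chosen lexicographic parameter does strictly decrease under both the ``merge two interior parts'' move (which drops the number of parts) and the ``extend the last part'' move (which keeps it but drops the padding)---which is why $i$ is taken as the primary coordinate. The boundary situations where the adjacent block is empty, or $\lambda'$ would be maximal, are degenerate: the nontrivial mode is then vacuous, or is the top Whittaker coefficient, which vanishes since $\sco\neq(r)$. Apart from this, the argument is a routine adaptation of the two cited lemmas, so I would not reproduce their Fourier-expansion manipulations in detail.
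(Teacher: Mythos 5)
Your proposal is correct and follows essentially the same route as the paper: an induction on $k$ whose base case is Eq.~(\ref{eq:app-C-after-root-exchange}), with each step a Fourier expansion along the row subgroup $R_{q_1+\cdots+q_{k-1}}$ whose nontrivial modes are semi-Whittaker-type coefficients for the merged (or extended) partition, which still violates dominance --- the paper dismisses these with a one-line appeal to induction where you make the lexicographic outer induction explicit. One small inversion worth noting: the object being expanded is the function $g\mapsto\int_{[U'_{(q_1\cdots q_i),k}]}f(ug)\psi_{(q_1\cdots q_i)}(u)\,du$, whose \emph{constant term} along $[R_{q_1+\cdots+q_k}]$ is the integral over the larger group $[U'_{(q_1\cdots q_i),k+1}]$ (not the reverse, as you phrase it); since that function vanishes identically for all data by the inner hypothesis, its constant term already vanishes, so in this direction the identity you use still gives the conclusion and the nontrivial modes are not even strictly needed.
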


\begin{proof}
We prove this by induction on $k$. The case $k=1$ follows from Eq. (\ref{eq:app-C-after-root-exchange}). Suppose the result is true for $k-1$, and we prove it for $k$. Expand the integral
 \[
 \int\limits_{[U'_{(q_1\cdots q_i),k-1}]} f(ug)\psi_{(q_1\cdots q_i)}(u) \ du
 \]
 along the unipotent subgroup $R_{q_1+\cdots +q_{k-1}}$. The nontrivial orbit gives zero, since the coefficients correspond to a larger orbit and we can apply induction. Thus we only have the trivial orbit and
\[
\int\limits_{[U'_{(q_1\cdots q_i),k}]} f(ug)\psi_{(q_1\cdots q_i)}(u) \ du=\int\limits_{[U'_{(q_1\cdots q_i),k-1}]} f(ug)\psi_{(q_1\cdots q_i)}(u) \ du=0
\]
for all choices of data.
\end{proof}

To prove the vanishing statement in part (2), notice that
\begin{equation}\label{eq:app-B-6}
\int\limits_{[U]} f(ug) \psi_{\lam}(u)\ du
\end{equation}
contains Eq. (\ref{eq:app-B-5}) as an inner integral. Thus Eq. (\ref{eq:app-B-6}) is zero for all choices of data.

Finally, we prove the nonvanishing statement in part (2). This follows from the same argument as in the previous section. Once we have the vanishing result,  Eq. (\ref{eq:app-B-7}) is nonzero for some choice of data if and only if Eq. (\ref{eq:app-B-4}) is nonzero for some choice of data.
 This finishes the proof.

\subsection{Main result}
The main result in this paper is the following theorem.

\begin{Thm}\label{thm:main-thm}
With the notations in Section \ref{sec:degenerate-eis-series}, suppose that $\mathrm{Re}(s_i)\gg0$. We then have
\[
\sco_{G}(E_\mu(g,\underline{s}))=\mu.
\]
\end{Thm}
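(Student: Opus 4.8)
The plan is to deduce Theorem~\ref{thm:main-thm} directly from the two results already established: the semi-Whittaker statement of Theorem~\ref{thm:semi-whittaker} and the comparison of Proposition~\ref{prop:app-b-1}. Recall that, by the convention fixed earlier, the notation $\sco_{G}(E_\mu(g,\underline{s}))=\mu$ is shorthand for the assertion that the set $\sco_{G}(E_\mu(g,\underline{s}))$ of maximal unipotent orbits carrying a nonzero Fourier coefficient is the singleton $\{\mu\}$, and this is exactly statement (1) of Proposition~\ref{prop:app-b-1} applied to the automorphic representation generated by $E_\mu(g,\underline{s})$ with $\sco=\mu$. Thus it suffices to verify statement (2) of that proposition.

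To carry this out, note that statement (2) of Proposition~\ref{prop:app-b-1} has a vanishing part and a nonvanishing part, and each is supplied verbatim by Theorem~\ref{thm:semi-whittaker} once $\mathrm{Re}(s_i)\gg0$. For the vanishing part: if a general partition $\lambda=(p_1\cdots p_m)$ has $p_1+\cdots+p_l>t_1+\cdots+t_l$ for some $l$, then Theorem~\ref{thm:semi-whittaker}(1) gives that the $\lambda$-semi-Whittaker coefficient of $E_\mu(ug,\underline{s})$ vanishes for all choices of data, which is precisely the partial-sum hypothesis in Proposition~\ref{prop:app-b-1}(2) (the general partition plays the same role in both statements, the fixed partition $\mu$ plays the same role, and the partial-sum condition is the same). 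For the nonvanishing part: Theorem~\ref{thm:semi-whittaker}(2) gives that the $\mu$-semi-Whittaker coefficient of $E_\mu(ug,\underline{s})$ is nonzero for some choice of data. Here ``for all/some choices of data'' on the Eisenstein-series side corresponds to ``for all/some $f$'' in the representation generated by $E_\mu(g,\underline{s})$, so statement (2) of Proposition~\ref{prop:app-b-1} holds. Consequently statement (1) holds, that is, $\sco_{G}(E_\mu(g,\underline{s}))=\mu$.

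No genuinely new obstacle arises at this final step: it is a bookkeeping reduction to results already proved, and the only thing to check is the matching of indexing conventions between the two statements. The real work is upstream and is where I expect the difficulty to lie: Theorem~\ref{thm:semi-whittaker} is obtained, for $\mathrm{Re}(s_i)\gg0$, by unfolding $E_\mu(ug,\underline{s})$ over $P\backslash G/U\cong W(P_{\mu^\top})\backslash W(G)$ (the large real parts guaranteeing convergence) and is thereby reduced to the combinatorial core Theorem~\ref{thm:root-theoretic}, proved through the delicate analysis of the Weyl-group action on simple roots organized by the descending decomposition; and Proposition~\ref{prop:app-b-1} is the root-exchange comparison between semi-Whittaker coefficients and the Fourier coefficients attached to unipotent orbits. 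Granting those, Theorem~\ref{thm:main-thm} follows in one line as above. (The local analogue would be deduced identically from Theorem~\ref{thm:semi-whittaker-local} and Proposition~\ref{prop:app-b-2}.)
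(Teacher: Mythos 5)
Your proposal is correct and is essentially identical to the paper's own (one-line) argument: the paper likewise deduces Theorem~\ref{thm:main-thm} directly by combining Theorem~\ref{thm:semi-whittaker} with Proposition~\ref{prop:app-b-1}, exactly as you describe. The only point worth noting is that the ``$\geq$'' in the partial-sum hypothesis of Proposition~\ref{prop:app-b-1}(2) is evidently a typo for the strict ``$>$'' (as used throughout its proof and matching Theorem~\ref{thm:semi-whittaker}(1)), which you implicitly and correctly treat as such.
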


The theorem follows from Theorem \ref{thm:semi-whittaker} and Proposition \ref{prop:app-b-1}. This confirms Conjecture 5.1 in Ginzburg \cite{ginzburg2006certain}. We also remark that the nonvanishing part is also proved in Ginzburg \cite{ginzburg2006eulerian} Section 3, Proposition 1. This also implies that any unipotent orbit of general linear groups can occur as the unipotent orbit attached to a specific automorphic representation.

Notice that by Theorem \ref{thm:semi-whittaker-local} and Proposition \ref{prop:app-b-2}, a local analogue to Theorem \ref{thm:main-thm} is also true.

\begin{Thm}\label{thm:main-local}
Let $F$ be a non-Archimedean local field. Then
\[
\sco_G(\Ind_{P(F)}^{G(F)}\delta_P^{\underline{s}})=\mu.
\]
\end{Thm}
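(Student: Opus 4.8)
The plan is to deduce Theorem \ref{thm:main-local} by combining the local semi-Whittaker statement with the dictionary between semi-Whittaker coefficients and Fourier coefficients attached to unipotent orbits, exactly as Theorem \ref{thm:main-thm} is deduced in the global setting. First I would invoke Theorem \ref{thm:semi-whittaker-local}, which for a non-Archimedean local field $F$ computes the twisted Jacquet modules of the degenerate principal series $\Ind_{P(F)}^{G(F)}\delta_P^{\underline{s}}$ against every semi-Whittaker character $\psi_\lambda$: these vanish whenever $\lambda$ dominates $\mu$ (there is an index $l$ with $p_1+\cdots+p_l> t_1+\cdots+t_l$), and the space is one-dimensional when $\lambda=\mu$. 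Then I would feed this into Proposition \ref{prop:app-b-2}, the local analogue of Proposition \ref{prop:app-b-1}, which asserts that for a smooth representation $\pi$ of $\GL_r(F)$, the condition ``$\sco(\pi)=\sco$'' is equivalent to the combinatorial vanishing/nonvanishing pattern of $J_{U(F),\psi_\lambda}(\pi)$ over all general partitions $\lambda$.

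The key steps, in order, are: (1) apply Theorem \ref{thm:semi-whittaker-local} with $\sco=\mu$ to verify that $\Ind_{P(F)}^{G(F)}\delta_P^{\underline{s}}$ satisfies condition (2) of Proposition \ref{prop:app-b-2} — the vanishing for $\lambda$ with $q_1+\cdots+q_i\geq p_1+\cdots+p_i$ is immediate from part (1) of Theorem \ref{thm:semi-whittaker-local} (note one must also handle the borderline equality case, which is covered since equality for all $i$ forces $\lambda$ to be a permutation of $\mu$, reducing to the $\lambda=\mu$ case after conjugation), and the nonvanishing at $\lambda=\mu$ is part (2); (2) conclude from the equivalence in Proposition \ref{prop:app-b-2} that $\sco_G(\Ind_{P(F)}^{G(F)}\delta_P^{\underline{s}})=\mu$, i.e. the set is the singleton $\{\mu\}$. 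Since both of the ingredients are stated earlier in the excerpt, this is essentially a formal two-line deduction; the only care needed is in matching the partial-order conventions (domination of partial sums) used in Theorem \ref{thm:semi-whittaker-local} with the hypothesis ``$q_1+\cdots+q_i\geq p_1+\cdots+p_i$ for some $i$'' appearing in Proposition \ref{prop:app-b-2}, and checking that the equality boundary is treated correctly by a Weyl-conjugation argument reducing a rearranged $\lambda$ to $\mu$.

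The substantive mathematical content is entirely in the two results being cited, so there is no genuine obstacle in this final step. If anything, the one place requiring a sentence of justification is why the local semi-Whittaker theorem, which is stated for the two extreme cases $\lambda$ dominating $\mu$ and $\lambda=\mu$, suffices to pin down condition (2) of Proposition \ref{prop:app-b-2} for \emph{all} general partitions $\lambda$: this is because Proposition \ref{prop:app-b-2}'s condition (2) only refers to those same two regimes (the dominating ones, where we need vanishing, and $\lambda=\mu$, where we need nonvanishing), so the match is exact once one observes that rearranging the parts of $\lambda$ into non-increasing order and conjugating by the corresponding Weyl element does not affect the dimension of the twisted Jacquet module. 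Thus the proof is: apply Theorem \ref{thm:semi-whittaker-local}, then apply Proposition \ref{prop:app-b-2}, and read off $\sco_G(\Ind_{P(F)}^{G(F)}\delta_P^{\underline{s}})=\mu$.
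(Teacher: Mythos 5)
Your proposal matches the paper exactly: the paper proves Theorem~\ref{thm:main-local} in one line by citing Theorem~\ref{thm:semi-whittaker-local} together with Proposition~\ref{prop:app-b-2}, which is precisely the two-step deduction you describe. One small caveat: your discussion of the ``borderline equality case'' is addressing a phantom issue (and is in fact misreasoned --- having $q_1+\cdots+q_i = p_1+\cdots+p_i$ for some $i$ with no strict inequality does \emph{not} force $\lambda$ to be a rearrangement of $\mu$, and reducing to $\lambda=\mu$ would give \emph{non}vanishing, the opposite of what you want). The hypothesis ``$q_1+\cdots+q_i \geq p_1+\cdots+p_i$ for some $i$'' in Proposition~\ref{prop:app-b-2} is almost certainly a typo for strict inequality $>$: as written, $\geq$ would be satisfied by $\lambda=\sco$ itself (take $i=1$), contradicting the nonvanishing clause, and the proofs of Proposition~\ref{prop:app-b-1} consistently work with the strict version. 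With $>$ in place, Theorem~\ref{thm:semi-whittaker-local} supplies exactly the required vanishing and nonvanishing, and no boundary case needs separate treatment.
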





\bibliographystyle{siam}

\end{document}